\documentclass[reqno, 11pt]{amsart}
\allowdisplaybreaks

\usepackage{amsmath,amssymb,amsthm,amssymb,amsfonts,amstext,mathtools}
\usepackage{url}
\usepackage[ruled,linesnumbered,boxed,boxruled,vlined]{algorithm2e}
\usepackage{algorithmic}
\usepackage{color}
\usepackage{verbatim}
\usepackage{hhline}
\usepackage{diagbox}
\usepackage{mleftright}
\usepackage{relsize}
\usepackage[normalem]{ulem}
\usepackage{amsopn}
\usepackage{textcomp}
\usepackage{tikz}
\usetikzlibrary{matrix}
\usepackage{slashbox}
\usepackage{etoolbox}
\usepackage{setspace}
\usepackage[colorlinks=true, urlcolor=blue, linkcolor=blue, citecolor=blue]{hyperref}
\usepackage[left=28mm, right=28mm, top=30mm, bottom=30mm, includefoot, includehead]{geometry}

\AtBeginEnvironment{algorithm}{\setstretch{1}}
\setcounter{MaxMatrixCols}{20}

\newtheorem{theorem}{Theorem}[section]
\newtheorem{corollary}[theorem]{Corollary}
\newtheorem{definition}[theorem]{Definition}

\newtheorem{lemma}[theorem]{Lemma}
\newtheorem{proposition}[theorem]{Proposition}
\newtheorem{remark}[theorem]{Remark}
\newtheorem{example}[theorem]{Example}
\newtheorem{conjecture}[theorem]{Conjecture}
\newtheorem{question}[theorem]{Question}
\numberwithin{equation}{section}


\newcommand{\C}{\mathbb{C}}
\newcommand{\R}{\mathbb{R}}

\newcommand{\N}{\mathbb{N}}

\newcommand{\sM}{\mathcal{M}}

\DeclarePairedDelimiter{\ceil}{\lceil}{\rceil}
\DeclareMathOperator{\rank}{rank}


\makeatletter
\@namedef{subjclassname@2020}{%
	\textup{2020} Mathematics Subject Classification}
\makeatother

\title{Exact solutions in low-rank approximation with zeros}

\author{Kaie Kubjas}
\author{Luca Sodomaco}
\address{Department of Mathematics and Systems Analysis, Aalto University}
\email{kaie.kubjas@aalto.fi}
\email{luca.sodomaco@aalto.fi}
\author{Elias Tsigaridas}
\address{Inria Paris and IMJ-PRG, Sorbonne Universit\'e and Paris Universit\'e}
\email{elias.tsigaridas@inria.fr}

\subjclass[2020]{14M12, 14P05, 13P25, 90C26, 68W30}
\keywords{Structured low-rank approximation, zero patterns, Euclidean distance degree, nonnegative matrix factorization}

\begin{document}

\maketitle

\begin{abstract}
Low-rank approximation with zeros aims to find a matrix of fixed rank and with a fixed zero pattern that minimizes the Euclidean distance to a given data matrix.
We study the critical points of this optimization problem using algebraic tools.
In particular, we describe special linear, affine, and determinantal relations satisfied by the critical points.
We also investigate the number of critical points and how this number is related to the complexity of nonnegative matrix factorization problem.
\end{abstract}

\section{Introduction}
The {\em best rank-$r$ approximation problem} aims to find a real rank-$r$ matrix that minimizes the Euclidean distance to a given real data matrix.
The solution of this problem is completely addressed by the Eckart-Young-Mirsky theorem which states that the best rank-$r$ approximation is given by the first $r$ components of the {\em singular value decomposition} (SVD) of the data matrix.

We study the {\em structured best rank-$r$ approximation problem}, namely we consider additional linear constraints on rank-$r$ matrices.
We focus on {\em coordinate subspaces}, i.e., linear spaces that are defined by setting some entries to zero.
Let $S\subset[m]\times[n]$ denote the indices of zero entries.
Given $U=(u_{ij})\in\R^{m\times n}$, our optimization problem becomes
\begin{equation} \label{low_rank_approximation_with_zeros}
\begin{aligned}
\min_{X} \  & d_U(X)\coloneqq\sum_{i=1}^m \sum_{j=1}^n (x_{ij}-u_{ij})^2\\
\textrm{s.t.} \  & \text{$x_{ij}=0\ \forall (i,j)\in S$ and $\rank(X)\leq r$}\,.
\end{aligned}
\end{equation}

Structured low-rank approximation problem has been studied in \cite{chu2003structured,markovsky2008structured,markovsky2012low};
see also \cite{GilGli-low-2011} for low rank approximations with weights.
Exact solutions to this problem have been investigated by Golub, Hoffman and Stewart~\cite{golub1987generalization}, and by Ottaviani, Spaenlehauer and Sturmfels \cite{OSS-esslra-14}.\\
In~\cite{golub1987generalization}, rank-$r$ critical points are studied under the constraint that entries in a set of rows or in a set of columns of a matrix stay fixed.
This situation is more general than ours in the aspect that the fixed entries are not required to be zero but more restrictive when it comes to the indices of the entries that are fixed.
In~\cite{OSS-esslra-14}, rank-$r$ critical points restricted to generic subspaces of matrices are studied.
In our paper, the linear spaces set some entries equal to zero and hence are not generic.
Because of this, we cannot use many powerful tools from algebraic geometry and intersection theory and we have to come up with algebraic and computational techniques that exploit this special structure.
For some properties of determinantal ideals of matrices with 0 entries and their relations to problems in graph theory we refer the reader to \cite{ConWel-lovasz-19} and references therein.
Horobet and Rodriguez study the problem  when at least one solution of a certain family of optimization problems satisfies given polynomial conditions, and address the structured low-rank approximation as a particular case~\cite[Example 15]{horobet2020data}.

The global minimum of the optimization problem~(\ref{low_rank_approximation_with_zeros}) always exists, because we can select any point $X$ in the feasible region and consider the feasible region intersected with the closed ball $B_{d_U(X)}(U)$ centered at $U$ and with radius $d_U(X)$. Since the feasible region is a closed semialgebraic set, then the intersection is closed and bounded, and hence compact. The distance function is continuous, thus achieves its minimum on this set. This minimum is a global minimum of~(\ref{low_rank_approximation_with_zeros}). The optimization problem~(\ref{low_rank_approximation_with_zeros}) is nonconvex and often local methods are used to solve it.
They return a local minimum of the optimization problem.  
There are heuristics for finding a global minimum, but these heuristics do not guarantee that a local minimum is indeed a global minimum.
We refer to \cite{markovsky2008structured} for various algorithms and to \cite{schost2016quadratically}
for an algorithm with locally quadratic convergence.
Cifuentes recently introduced convex relaxations for structured low-rank approximation that under certain assumptions have provable guarantees~\cite{cifuentes2021convex}.
Another interesting direction, closely related but not directly applicable to our problem, is to employ recent optimization techniques for simultaneously sparse and low rank approximation \cite{ParSel-improved-17,SalRicVay-icml-012}.

To compute a global minimizer of (\ref{low_rank_approximation_with_zeros}) algebraically, we need to look at all the complex critical points of the polynomial function $d_U\colon\C^{m\times n}\to\C$ on the intersection $\mathcal{L}_r^S\coloneqq\mathcal{X}_r\cap\mathcal{L}^S$, where
\begin{align}\label{eq: def L r S}
\mathcal{X}_r \coloneqq\{X\in\C^{m\times n}\mid\rank(X)\leq r\}\,,\ \mathcal{L}^S \coloneqq\{X\in\C^{m\times n}\mid x_{ij}=0\ \forall (i,j)\in S\},
\end{align}
and then select the real solution that minimizes the Euclidean distance. The problem of finding critical points of $d_U$ on $\mathcal{L}_r^S$ can be considered in the more general setting when $U$ is a complex data matrix. This setting includes the practically meaningful setting when $U$ has real entries.
If $U\in\C^{m\times n}$ is generic, namely if it belongs to the complement of a Zariski closed set, then the number of critical points is constant and is called the {\em Euclidean Distance degree} (ED degree) of $\mathcal{L}_r^S$.
We denote this invariant by $\mathrm{EDdegree}(\mathcal{L}_r^S)$.
The importance of the ED degree is that it measures the algebraic complexity of writing the optimal solution as a function of $U$.
More generally, the ED degree of an algebraic variety is introduced in \cite{draisma2016euclidean}.
The main goal of this paper is to study the critical points and the ED degree of the minimization problem (\ref{low_rank_approximation_with_zeros}).

\medskip
When rank is one, then characterizing critical points becomes a combinatorial problem.
More precisely, listing all critical points translates to the problem of listing minimal vertex covers of a bipartite graph. The complexity of counting vertex covers in a bipartite graph is known to be {\texttt\#}P-complete~\cite{provan1983complexity}. Our main result about rank-one critical points is Proposition~\ref{prop: ED degree rank 1 zero pattern} which gives the ED degree of $\mathcal{L}_1^S$ in terms of the minimal covers. For row/column and diagonal zero patterns this results in explicit formulas (Corollaries~\ref{corol: rank-one row and column zero patterns} and~\ref{corol: ED degree rank 1 diagonal}).

Our first main result for rank-$r$ critical points is Theorem~\ref{thm: linear_space_for_rank_r_critical_points} which studies the linear span of rank-$r$ critical points of $d_U$.
We call it the {\em critical space} in the structured setting. This is motivated by the notion of critical space of a tensor in the unstructured setting defined by Draisma, Ottaviani and Tocino \cite{DOT}.
From the algebraic perspective, Theorem~\ref{thm: linear_space_for_rank_r_critical_points} provides a lower bound on the minimal number of generators of degree one in the zero dimensional ideal of rank-$r$ critical points of $d_U$. When $\mathcal{L}_r^S$ is an irreducible variety, we expect this lower bound to be also an upper bound, as stated in Conjecture \ref{conj: linear span}.

In the unstructured setting, the rank-one critical points form a basis of the critical space and the rank-$r$ critical points are linear combinations of the basis vectors with coefficients in $\{0,1\}$.
In the structured setting, there are not enough rank-one critical points to give a basis of the critical space.
We leave it as an open question, whether there is a natural extension to a basis and whether the coefficients that give rank-$r$ critical points as linear combinations of basis elements have a nice description.

Our second main result is Proposition~\ref{prop:affine_relations} that describes affine linear relations that are satisfied by the rank-$r$ critical points of $d_U$ in the unstructured setting.
In the structured setting, we conjecture the affine linear relations satisfied by the rank-$r$ critical points of $d_U$.
The last kind of constraints satisfied by the rank-$r$ critical points that we consider are nonlinear determinantal constraints given in Proposition~\ref{prop: nonlinear relations minors constrained}.
The ED degree of $d_U$ is studied in Section~\ref{sec: EDdegree}.
Our experiments indicate that the ED degree is exponential in $|S|$.

\medskip
The optimization problem \eqref{low_rank_approximation_with_zeros} is motivated by the {\em nonnegative matrix factorization} (NMF) problem.
Given a nonnegative matrix $X \in \mathbb{R}^{m\times n}_\mathsmaller{\ge 0}$, the {\em nonnegative rank} of $X$ is the smallest $r$ such that
\[
X=AB, \text{ where } A \in \R^{m \times r}_\mathsmaller{\ge 0} \text{ and } B \in \R^{r \times n}_\mathsmaller{\ge 0}\,.
\]
NMF aims to find a matrix $X$ of nonnegative rank at most $r$ that minimizes the Euclidean distance to a given data matrix $U \in \mathbb{R}^{m\times n}_\mathsmaller{\ge 0}$, see~\cite{gillis2020nonnegative} for further details.

In Section~\ref{sec: nonnegative}, we apply the structured best rank-two approximation problem to NMF.
Let  $\sM_2$ be the set of matrices of nonnegative rank at most two and consider a matrix $U\in \R^{3 \times 3}_\mathsmaller{\ge 0}$.
In order to compute the best nonnegative rank-$2$ approximation of $U$, we need to compute the critical points of the Euclidean distance function $d_U$ over $\sM_2\cap\mathcal{L}^S$ for all zero patterns $S\subset[3]\times[3]$. 
We show that the minimal number of critical points needed to determine the global minimum of $d_U$ is 756 for a generic $U$.
For the same case, we show experimentally that the optimal critical point may have a few zeros.

\medskip
The rest of the paper is organized as follows.
In Section \ref{sec: prelim} we set our notations (Section~\ref{sec: notation}), we recall the basics of ED minimization on an algebraic variety (Section~\ref{sec: ED optimization}) and we discuss Frobenius distance minimization on a variety of low-rank matrices (Section~\ref{sec: unstructured low-rank approximation}).
In Section \ref{sec: rank one constrained approx} we address the best rank-one approximation problem with assigned zero patterns (Section~\ref{subsec:rank-one}) and best rank-$r$ approximation for rectangular and block diagonal matrices (Section~\ref{sec:rectangular-matrices}).
In Section \ref{sec: special relations} we investigate special polynomial relations among the critical points of $d_U$.
In particular, in Sections~\ref{sec:linear_relations} and~\ref{sec: affine relations} we concentrate on particular linear and affine relations among critical points respectively, and in Section~\ref{sec: special nonlinear relations} on some special nonlinear relations.
Observations for generic linear constraints not necessarily coming from assigned zero patterns are given in Section~\ref{sec: general subspaces}.
In Section \ref{sec: EDdegree} we provide conjectural ED degree formulas for special formats and zero patterns $S$, obtained from computational experiments.
In Section \ref{sec: nonnegative} we relate the minimization problem \eqref{low_rank_approximation_with_zeros} to nonnegative matrix factorization.
The results of Sections \ref{sec: EDdegree} and \ref{sec: nonnegative} are supported by computations that use the HomotopyContinuation.jl \cite{HomotopyContinuation.jl} software package as well as the software Macaulay2 \cite{M2} and Maple\texttrademark~2016 \cite{maple}.
The code can be found at \href{https://github.com/kaiekubjas/exact-solutions-in-low-rank-approximation-with-zeros}{github.com/kaiekubjas/exact-solutions-in-low-rank-approximation-with-zeros}.

\section{Preliminaries}\label{sec: prelim}

The preliminaries section consists of three subsections on algebra basics and notations (Section~\ref{sec: notation}), Euclidean distance minimization (Section~\ref{sec: ED optimization}) and unstructured low-rank approximation (Section~\ref{sec: unstructured low-rank approximation}).

\subsection{Algebra basics and notation}\label{sec: notation}

We start by setting up the notations used throughout the paper.
We always work over the field $\mathbb{F}=\R$ or $\mathbb{F}=\C$.
Without loss of generality, we always assume that $m\le n$ when considering the vector space $\mathbb{F}^{m\times n}$. 
Given an $m\times n$ matrix $X\in\mathbb{F}^{m\times n}$, we consider the subsets $I\subset[m]$ and $J\subset[n]$. We always assume that the elements of $I$ and $J$ are ordered in increasing order. We denote by $X_{I,J}$ the submatrix obtained by selecting the rows of $X$ with indices in $I$ and the columns of $X$ with indices in $J$. Moreover, if $|I|=|J|$ we denote by $M_{I,J}(X)$ the minor of $X$ corresponding to rows in $I$ and columns in $J$. If $I=J=\emptyset$, we set $M_{\emptyset,\emptyset}(X)\coloneqq 1$.

We use the notation $\mathbb{F}[X]$ to denote the ring of polynomials with variables $x_{ij}$ of $X$ and with coefficients in $\mathbb{F}$. For a given subset $Y\subset\mathbb{F}[X]$, we indicate with $\mathcal{V}(Y)$ the {\em (algebraic) variety} of $Y$, that is the zeros of $Y$ in $\mathbb{F}^{m\times n}$. In particular $\mathcal{V}(Y)=\mathcal{V}(\langle Y\rangle)$, where $\langle Y\rangle\subset\mathbb{F}[X]$ is the ideal generated by elements of $Y$. The subsets of $\mathbb{F}^{m\times n}$ of the form $\mathcal{V}(Y)$ for some $Y\subset\mathbb{F}[X]$ constitute the closed sets of the Zariski topology of $\mathbb{F}^{m\times n}$. In particular, a closed set $\mathcal{V}(Y)$ is {\em irreducible} if it is not the union of two non-empty proper subsets that are closed in the Zariski topology. Finally, for a given subset $Z\subset\mathbb{F}^{m\times n}$, we denote by $\mathcal{I}(Z)$ the {\em ideal of $Z$}, namely the ideal generated by all polynomials vanishing at the points of $Z$. We refer the reader to \cite{cox1992ideals} for a more general commutative algebra background.

\subsection{Euclidean distance minimization on a real algebraic variety}\label{sec: ED optimization}

Let $V^\mathsmaller{\R}$ be a finite dimensional real vector space equipped with a positive definite symmetric bilinear form $\langle\cdot,\cdot\rangle\colon V^\mathsmaller{\R}\times V^\mathsmaller{\R}\to\R$.
Without loss of generality, we can assume that $\langle\cdot,\cdot\rangle$ is the standard Euclidean inner product, that is $\langle x,y\rangle=\sum_ix_iy_i$, for $x, y \in V^\mathsmaller{\R}$.
Consider a real algebraic variety $\mathcal{X}^\mathsmaller{\R}\subset V^\mathsmaller{\R}$.
The generalized version of \eqref{low_rank_approximation_with_zeros} is the problem of minimizing the squared Euclidean distance $d_u(\cdot)\coloneqq\langle u-\cdot,u-\cdot\rangle$ over $\mathcal{X}^\mathsmaller{\R}$.
For a generic data point $u$, the minimum of $d_u$ is attained at a smooth point $x\in\mathcal{X}^\mathsmaller{\R}$, such that the vector $u-x$ is orthogonal to the tangent space $T_x\mathcal{X}^\mathsmaller{\R}$ with respect to $\langle\cdot,\cdot\rangle$.
Such a point $x$ is called a {\em critical} point of the function $d_u$ on $\mathcal{X}^\mathsmaller{\R}$.
We can formulate the whole problem algebraically, since all relations among critical points are described by polynomials in the coordinates of $x$ and $u$.

A complete study of the ideal of relations among critical points of $d_u$ requires us to work over an algebraically closed field.
For this reason we extend $V^\mathsmaller{\R}$ to a complex vector space $V=V^\mathsmaller{\R}\otimes\C$ and we let $\mathcal{X}$ be the complex variety associated to $\mathcal{X}^\mathsmaller{\R}$.
Finally, we denote again with $d_u$ the complex-valued function defined by $d_u(x)\coloneqq\langle u-x,u-x\rangle$.
We stress that $\langle\cdot,\cdot\rangle$ is not the standard inner product in the complex vector space $V$.
After this extension, one might study all complex critical points of the function $d_u$ on $\mathcal{X}$ and then look for the real global minimizer among all the complex solutions.
The interesting property is that the number of complex critical points of $d_u$ is an invariant of $\mathcal{X}$ for a generic data point $u\in V$ and is called the {\em Euclidean Distance degree} (ED degree) of $\mathcal{X}$. It is studied in detail in \cite{draisma2016euclidean}.

The bilinear form $\langle\cdot,\cdot\rangle$ induces a smooth quadric hypersurface $Q\subset V$ of equation $\langle x,x\rangle=0$.
It is called the {\em isotropic quadric} of $V$.
In particular, for every $x\in Q$, the tangent space $T_xQ$ is the hyperplane of vectors $y\in V$ such that $\langle x,y\rangle=0$.
An affine subspace $W\subset V$ is {\em transversal} to $Q$ if for every $x\in W\cap Q$, the span between $T_xQ$ and $W$ generates $V$.
In such case, the space $V$ is the direct sum between $W$ and its orthogonal complement.
In turn, this yields a good notion of {\em orthogonal projection} $\pi_W\colon V\to W$ onto $W$ with respect to $Q$.
We refer to \cite[Section 4]{piene2015polar} and \cite[Section 1.1]{sodphd} for more details on orthogonal projections with respect to the isotropic quadric $Q$.

The following two basic lemmas are needed in the following sections.

\begin{lemma}\label{lemma: critical points projection}
In the setting explained before, consider a proper affine subspace $W\subset V$ transversal to the isotropic quadric $Q$ and an algebraic variety $\mathcal{X}\subset W$.
Let $\pi_W\colon V\to W$ be the orthogonal projection onto $W$. Then, for all $u\in V$, the critical points on $\mathcal{X}$ of the squared distance functions $d_u$ and $d_{\pi_W(u)}$ coincide.
\end{lemma}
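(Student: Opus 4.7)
The plan is to reduce the problem to a simple orthogonality computation using the decomposition $u = \pi_W(u) + (u - \pi_W(u))$ induced by the transversality of $W$ and $Q$.

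First, I would recall the characterization: a point $x \in \mathcal{X}$ is a critical point of $d_u$ if and only if $x$ is smooth in $\mathcal{X}$ and $\langle u - x, v \rangle = 0$ for every $v \in T_x \mathcal{X}$. Since $\mathcal{X} \subset W$ and smoothness is intrinsic to $\mathcal{X}$, the set of smooth points and the tangent spaces $T_x\mathcal{X}$ are the same whether we view $\mathcal{X}$ inside $V$ or inside $W$. In particular, denoting by $W_0$ the linear direction of the affine subspace $W$, we have $T_x \mathcal{X} \subset W_0$ for every $x \in \mathcal{X}$.

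Next, I would use the transversality hypothesis to get the orthogonal decomposition $V = W_0 \oplus W_0^{\perp}$ with respect to $\langle \cdot, \cdot \rangle$, which is precisely what makes $\pi_W \colon V \to W$ well-defined as the orthogonal projection. By construction, $u - \pi_W(u) \in W_0^{\perp}$, so that $\langle u - \pi_W(u), v \rangle = 0$ for every $v \in W_0$, and in particular for every $v \in T_x \mathcal{X}$.

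With these ingredients the proof is a one-line calculation. For any $x \in \mathcal{X}$ and any $v \in T_x \mathcal{X}$, write
\[
\langle u - x, v \rangle \;=\; \langle \pi_W(u) - x, v \rangle \;+\; \langle u - \pi_W(u), v \rangle \;=\; \langle \pi_W(u) - x, v \rangle,
\]
since the second summand vanishes by the previous paragraph. Thus the orthogonality condition that characterizes critical points of $d_u$ on $\mathcal{X}$ is equivalent to the one that characterizes critical points of $d_{\pi_W(u)}$ on $\mathcal{X}$, and the two critical loci coincide.

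The only subtle point, and the main thing to be careful about, is that over $\mathbb{C}$ the form $\langle \cdot , \cdot \rangle$ is only a nondegenerate symmetric bilinear form and not an inner product, so a priori $W_0$ need not have a bilinear complement. This is precisely why the transversality of $W$ with the isotropic quadric $Q$ is hypothesized: it guarantees the decomposition $V = W_0 \oplus W_0^{\perp}$ and hence the existence of $\pi_W$. Once this is granted, the argument is a purely formal consequence of bilinearity, and no genericity or smoothness assumption on $u$ is needed.
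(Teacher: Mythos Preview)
Your proof is correct and follows essentially the same approach as the paper: both use the decomposition $u-x=(\pi_W(u)-x)+(u-\pi_W(u))$ and observe that the second summand is orthogonal to $T_x\mathcal{X}\subset W_0$, so the critical-point condition is unchanged. Your write-up is simply a bit more explicit about the role of the linear direction $W_0$ and the transversality hypothesis.
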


\begin{proof}
Let $x\in\mathcal{X}$ be a critical point of $d_u$. In particular $\langle u-x,y\rangle=0$ for all $y\in T_x\mathcal{X}$.
Furthermore $\langle\pi_W(u)-x,y\rangle=\langle u-x,y\rangle-\langle u-\pi_W(u),y\rangle=0-0=0$ since $u-\pi_W(u)$ and $y$ sit in orthogonal subspaces.
\end{proof}

\begin{lemma}\label{lemma: ED degree union}
Consider the affine varieties $\mathcal{X}_1,\ldots,\mathcal{X}_p$ in $V$ and suppose that $\mathcal{X}_i\not\subset \mathcal{X}_j$ for all $i\neq j$.
Then $\mathrm{EDdegree}(\mathcal{X}_1\cup\cdots\cup\mathcal{X}_p)=\sum_{i=1}^p\mathrm{EDdegree}(\mathcal{X}_i)$.
\end{lemma}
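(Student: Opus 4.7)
The plan is to show that for a generic data point $u\in V$ the critical locus of $d_u$ on the union $\mathcal{X}\coloneqq\mathcal{X}_1\cup\cdots\cup\mathcal{X}_p$ decomposes as a disjoint union of the critical loci on each $\mathcal{X}_i$, so that taking cardinalities yields the additive formula.

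First I would analyze smooth points of $\mathcal{X}$. At any smooth point $x\in\mathcal{X}$ the point $x$ lies on a unique component $\mathcal{X}_i$, since a point in $\mathcal{X}_i\cap\mathcal{X}_j$ with $i\neq j$ is forced to be singular on the union. Locally near such an $x$ we have $\mathcal{X}=\mathcal{X}_i$, so $T_x\mathcal{X}=T_x\mathcal{X}_i$, and $x$ is a critical point of $d_u$ on $\mathcal{X}$ if and only if it is a critical point of $d_u$ on $\mathcal{X}_i$ in the sense of Section~\ref{sec: ED optimization}. This yields the set-theoretic inclusion
\[
\mathrm{Crit}(\mathcal{X},u)\;\subseteq\;\bigcup_{i=1}^p \mathrm{Crit}(\mathcal{X}_i,u),
\]
and the reverse inclusion holds for every $x\in\mathrm{Crit}(\mathcal{X}_i,u)$ that avoids $\mathcal{X}_j$ for all $j\neq i$.

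Next I would invoke two genericity facts. By the very definition of the ED degree, for $u$ outside a Zariski closed set each $\mathrm{Crit}(\mathcal{X}_i,u)$ consists of exactly $\mathrm{EDdegree}(\mathcal{X}_i)$ smooth points of $\mathcal{X}_i$. Separately, I would argue that for generic $u$ none of these critical points lies in any $\mathcal{X}_j$ with $j\neq i$. This is where the hypothesis $\mathcal{X}_i\not\subset\mathcal{X}_j$ enters: passing to irreducible components, this assumption forces $\mathcal{X}_i\cap\mathcal{X}_j$ to be a proper closed subvariety of each component of $\mathcal{X}_i$ not already contained in $\mathcal{X}_j$, hence of strictly smaller dimension. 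The ED correspondence $E_i\subset V\times V$ has $\dim E_i=\dim V$ and dominates the first factor; restricting it to pairs $(u,x)$ with $x\in\mathcal{X}_i\cap\mathcal{X}_j$ produces a subvariety of dimension at most $\dim(\mathcal{X}_i\cap\mathcal{X}_j)+\operatorname{codim}\mathcal{X}_i<\dim V$, whose image under the projection to $V$ is therefore a proper closed subset. Generic $u$ avoids this image.

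The main obstacle I foresee is exactly this last dimension count: the hypothesis $\mathcal{X}_i\not\subset\mathcal{X}_j$ is a global statement about varieties that may be reducible, and one must pass carefully to irreducible components to turn it into the required strict dimension drop on every component. Once the two genericity statements are combined by intersecting finitely many Zariski open subsets of $V$, the decomposition
\[
\mathrm{Crit}(\mathcal{X},u)\;=\;\bigsqcup_{i=1}^p \mathrm{Crit}(\mathcal{X}_i,u)
\]
holds for generic $u$, and taking cardinalities proves the lemma.
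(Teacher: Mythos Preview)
Your proposal is correct and follows essentially the same approach as the paper: both argue that a smooth point of $\mathcal{X}$ lies on a unique $\mathcal{X}_i$ with matching tangent spaces, giving the inequality $\mathrm{EDdegree}(\mathcal{X})\le\sum_i\mathrm{EDdegree}(\mathcal{X}_i)$, and then invoke genericity of $u$ to ensure that critical points of $d_u$ on each $\mathcal{X}_i$ avoid the singular locus of $\mathcal{X}$, yielding the reverse inequality. The only difference is that you spell out the genericity step via a dimension count on the ED correspondence, whereas the paper simply asserts that for sufficiently generic $u$ the critical points on $\mathcal{X}_i$ lie outside $\mathrm{Sing}(\mathcal{X})$; your version is a slightly more detailed justification of the same fact.
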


\begin{proof}
Let $u\in V$ be a generic data point and $\mathcal{X}=\mathcal{X}_1\cup\cdots\cup\mathcal{X}_p$.
A smooth point $x\in\mathcal{X}$ is critical for $d_u$ if the vector $u-x$ is orthogonal to the tangent space $T_x\mathcal{X}$.
Since $x$ is a smooth point of $\mathcal{X}$, there exists a unique index $i\in[p]$ such that $x$ is a smooth point of $\mathcal{X}_i$ and $T_x\mathcal{X}=T_x\mathcal{X}_i$.
Here we are using the hypothesis $\mathcal{X}_i\not\subset \mathcal{X}_j$ for all $i\neq j$.
Therefore $x$ is a critical point for $d_u$ on $\mathcal{X}_i$.
This yields the inequality $\mathrm{EDdegree}(\mathcal{X})\le\sum_{i=1}^p\mathrm{EDdegree}(\mathcal{X}_i)$.

To prove the reverse inequality, consider an index $i\in[p]$ and a critical point $x\in\mathcal{X}_i$ for $d_u$.
If $u$ is sufficiently generic, then $x$ lies outside the singular locus of $\mathcal{X}$ and $T_x\mathcal{X}_i=T_x\mathcal{X}$.
Therefore $x$ is a critical point for $d_u$ on $\mathcal{X}$.
Repeating the argument for every $i\in[p]$, we derive the desired inequality $\mathrm{EDdegree}(\mathcal{X})\ge\sum_{i=1}^p\mathrm{EDdegree}(\mathcal{X}_i)$.
\end{proof}

\subsection{Unstructred low-rank approximation}\label{sec: unstructured low-rank approximation}

We consider the notation of Section \ref{sec: ED optimization}. We specialize to the Euclidean space $(V^\mathsmaller{\R},\langle\cdot,\cdot\rangle)=(\R^{m\times n},\langle\cdot,\cdot\rangle_F)$, where $\langle\cdot,\cdot\rangle_F$ is the {\em Frobenius inner product} in $\R^{m\times n}$.
It is defined by $\langle A, B\rangle_F\coloneqq\mathrm{trace}(AB^T)$ for any two $m\times n$ matrices $A$ and $B$.
Given $U\in\R^{m\times n}$, we minimize the squared Frobenius distance
\[
d_U(X)=\langle X-U,X-U\rangle_F=\sum_{i=1}^m\sum_{j=1}^n(x_{ij}-u_{ij})^2\,,
\]
over the real variety $\mathcal{X}^\mathsmaller{\R}=\mathcal{X}_r^\mathsmaller{\R}$ of $m\times n$ matrices of rank at most $r$.
Note that $d_U(X)$ coincides with the squared Euclidean distance between $X$ and $U$ in $\R^{m\times n}$.

An important tool to study low-rank approximation problem is given by the following decomposition of a real matrix.

\begin{theorem}[Singular Value Decomposition]\label{thm: SVD}
Any matrix $U\in\R^{m\times n}$ admits the {\em Singular Value Decomposition (SVD)}
\begin{equation}\label{eq: free SVD}
U=A\,\Sigma\,B^T\,,
\end{equation}
where $A\in\R^{m\times m}$ and $B\times\R^{n\times n}$ are orthogonal matrices and $\Sigma\in\R^{m\times n}$ is such that $\Sigma_{ii}=\sigma_i$ for some real numbers $\sigma_1\ge\sigma_2\ge\cdots\sigma_m\ge 0$, otherwise $\Sigma_{ij}=0$.
The numbers $\sigma_i$ are called {\em singular values} of $U$.
Denoting by $a_i$ and $b_i$ the columns of $A$ and $B$ respectively, for all $i\in[m]$ the pair $(a_i,b_i)$ is called a {\em singular vector pair} of $U$.
If the singular values are all distinct, then all singular vector pairs are unique up to a simultaneous change of sign.
\end{theorem}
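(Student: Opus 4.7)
The plan is to reduce the SVD of a general rectangular matrix to the spectral theorem for a symmetric positive semidefinite matrix, which is the classical approach. Since we have assumed $m\le n$, we apply the spectral theorem to the $n\times n$ Gram matrix $U^T U$, which is symmetric and positive semidefinite. This yields an orthogonal matrix $B\in\R^{n\times n}$ whose columns $b_1,\dots,b_n$ are eigenvectors of $U^T U$ with eigenvalues $\lambda_1\ge\lambda_2\ge\dots\ge\lambda_n\ge 0$, and we define $\sigma_i\coloneqq\sqrt{\lambda_i}$ for $i\in[m]$.

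Next I would construct the left orthogonal factor $A$. Let $s\le m$ be the number of strictly positive $\sigma_i$. For $i\le s$, set $a_i\coloneqq \sigma_i^{-1}Ub_i\in\R^m$. A direct computation using $U^T U b_j=\lambda_j b_j$ and orthonormality of the $b_i$ gives $\langle a_i,a_j\rangle = \sigma_i^{-1}\sigma_j^{-1}b_i^T U^T U b_j = \sigma_i^{-1}\sigma_j^{-1}\lambda_j\,\delta_{ij}=\delta_{ij}$, so $a_1,\dots,a_s$ are orthonormal in $\R^m$. Then I would extend this family to an orthonormal basis $a_1,\dots,a_m$ of $\R^m$ (for example via Gram--Schmidt applied to a completion), and assemble $A\in\R^{m\times m}$ with these columns. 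The matrix $A$ is orthogonal by construction.

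To verify the factorization $U=A\Sigma B^T$, I would check equality after right-multiplication by each $b_i$. For $i\le s$, $A\Sigma B^T b_i=A\Sigma e_i=\sigma_i a_i=Ub_i$. For $s<i\le n$, the eigenvalue equation gives $\|Ub_i\|^2=b_i^T U^T U b_i=\lambda_i=0$, hence $Ub_i=0$, while $A\Sigma B^T b_i = A\Sigma e_i = 0$ (either because $\sigma_i=0$ or because the corresponding column of $\Sigma$ is zero when $i>m$). Since the $b_i$ form a basis of $\R^n$, the two matrices agree.

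For the uniqueness statement under the assumption that the $\sigma_i$ are all distinct (and hence nonzero, since $\sigma_1>\sigma_2>\dots>\sigma_m\ge 0$), I would note that each $\lambda_i=\sigma_i^2$ is then a simple eigenvalue of the symmetric matrix $U^T U$, so each eigenvector $b_i$ is determined up to a scalar $\epsilon_i\in\{-1,+1\}$. The relation $a_i=\sigma_i^{-1}Ub_i$ forces the corresponding $a_i$ to be multiplied by the same $\epsilon_i$, giving the claimed simultaneous change of sign. The main subtlety to handle carefully is the possibility of vanishing singular values in the existence part (which requires the Gram--Schmidt completion and the observation that $\lambda_i=0$ implies $Ub_i=0$); the distinctness hypothesis in the uniqueness statement conveniently sidesteps this issue.
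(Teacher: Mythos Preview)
The paper does not prove this theorem: it is stated as a classical result (followed immediately by Remark~2.2 on the algebraic SVD) and is simply quoted for later use. Your argument is the standard textbook proof via the spectral theorem applied to the Gram matrix $U^TU$, and the existence part is carried out correctly, including the completion of $a_1,\dots,a_s$ to an orthonormal basis of $\R^m$ and the verification of $U=A\Sigma B^T$ on the eigenbasis $b_1,\dots,b_n$.

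One small slip in the uniqueness paragraph: distinctness of the $\sigma_i$ does \emph{not} force $\sigma_m\neq 0$; the chain $\sigma_1>\sigma_2>\cdots>\sigma_m\ge 0$ still allows $\sigma_m=0$. When $\sigma_m=0$ and $m<n$, the eigenvalue $0$ of $U^TU$ has multiplicity $n-m+1\ge 2$, so $b_m$ is not determined up to sign, and the formula $a_m=\sigma_m^{-1}Ub_m$ is unavailable. Even when $m=n$ and $\sigma_m=0$, the vectors $a_m$ and $b_m$ are each determined only up to \emph{independent} signs, not a simultaneous one. In practice this is a well-known imprecision in the folklore statement (and the paper's phrasing inherits it); if you want the uniqueness claim to be literally true you should add the hypothesis $\sigma_m>0$, or restrict the conclusion to those indices $i$ with $\sigma_i>0$.
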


\begin{remark}\label{rmk: Algebraic SVD}
Theorem \ref{thm: SVD} extends to complex matrices using unitary matrices and their conjugates, but complex conjugation is not an algebraic operation. If a matrix $U\in\C^{m\times n}$ factors as in \eqref{eq: free SVD}, then $U$ admits an {\em algebraic SVD}. The complex matrices admitting an algebraic SVD are characterized in \cite[Theorem 2 and Corollary 3]{CH}, see also \cite[Section 3]{DLOT}.
\end{remark}

Similarly as in Section \ref{sec: ED optimization}, we rephrase our low-rank approximation problem over $\C$. In particular, we work in the space $\C^{m\times n}$ and we compute all critical points of the complex-valued function $d_U(X)=\langle U-X,U-X\rangle_F$ on the variety $\mathcal{X}=\mathcal{X}_r$ of complex $m\times n$ matrices of rank at most $r$.
The following result characterizes such critical points when the data matrix $U$ is real.
A similar statement holds for every complex data matrix $U$ admitting an algebraic SVD.
We report the statement used in \cite[Theorem 2.9]{MR3456581}.

\begin{theorem}[Eckart-Young-Mirsky]\label{thm: Eckart-Young}
Consider a matrix $U\in\R^{m\times n}$ of rank $k$ and its SVD as in (\ref{eq: free SVD}).
Let $r\in[k]$.
Then all the critical points of $d_U$ on $\mathcal{X}_r$ are of the form
\begin{equation}\label{eq: critical points EYM}
A(\Sigma_{i_1}+\cdots+\Sigma_{i_r})B^T
\end{equation}
for all subsets $\{i_1<\cdots<i_r\}\subset[k]$, where $\Sigma_j$ is the $m\times n$ matrix whose only non-zero entry is $\Sigma_{j,j}=\sigma_j$.
If the non-zero singular values of $U$ are distinct, then there are $\binom{k}{r}$ critical points.
\end{theorem}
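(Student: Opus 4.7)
The plan is to exploit the orthogonal invariance of the Frobenius inner product to reduce the problem to the case where $U$ is diagonal. Specifically, the map $\phi\colon\C^{m\times n}\to\C^{m\times n}$ defined by $X\mapsto A^T X B$ is a linear isometry with respect to $\langle\cdot,\cdot\rangle_F$ (since $A,B$ are orthogonal), sends the variety $\mathcal{X}_r$ to itself (since rank is preserved by multiplication by invertible matrices), and satisfies $d_U(X)=d_\Sigma(\phi(X))$. Hence $X$ is a (smooth) critical point of $d_U$ on $\mathcal{X}_r$ if and only if $Y\coloneqq A^T X B$ is a critical point of $d_\Sigma$ on $\mathcal{X}_r$, and under this bijection the asserted form $A(\Sigma_{i_1}+\cdots+\Sigma_{i_r})B^T$ corresponds to $Y=\Sigma_{i_1}+\cdots+\Sigma_{i_r}$. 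It therefore suffices to establish the statement for $U=\Sigma$.

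Next I would write down the critical equations at a smooth point $X\in\mathcal{X}_r$, where $\rank(X)=r$. Using a compact SVD $X=P_1 D_1 Q_1^T$ with $P_1\in\R^{m\times r}$, $Q_1\in\R^{n\times r}$ having orthonormal columns and $D_1\in\R^{r\times r}$ invertible diagonal, the tangent space admits the well-known description
\[
T_X\mathcal{X}_r=\{P_1 M+N Q_1^T:M\in\R^{r\times n},\ N\in\R^{m\times r}\}.
\]
Requiring $\langle\Sigma-X,T\rangle_F=0$ for every $T$ in this space translates, by varying $M$ and $N$ independently, into the two matrix equations $P_1^T(\Sigma-X)=0$ and $(\Sigma-X)Q_1=0$, i.e.,
\[
P_1^T\Sigma=D_1 Q_1^T,\qquad \Sigma Q_1=P_1 D_1.
\]
Multiplying these relations yields $\Sigma\Sigma^T P_1=P_1 D_1^2$ and $\Sigma^T\Sigma\, Q_1=Q_1 D_1^2$, so the columns of $P_1$ and $Q_1$ are eigenvectors of the diagonal matrices $\Sigma\Sigma^T=\mathrm{diag}(\sigma_1^2,\ldots,\sigma_m^2)$ and $\Sigma^T\Sigma$, with eigenvalues equal to the squared diagonal entries of $D_1$. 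When the nonzero $\sigma_i$ are pairwise distinct and $D_1$ is invertible, the corresponding eigenspaces are one-dimensional and spanned by standard basis vectors $e_{i_j}$ with $i_j\in[k]$; propagating the sign back through the equations $\Sigma Q_1=P_1 D_1$ forces the diagonal entries of $D_1$ to be precisely the $\sigma_{i_j}$, and one concludes that $X=\Sigma_{i_1}+\cdots+\Sigma_{i_r}$ for some subset $\{i_1<\cdots<i_r\}\subset[k]$. The count $\binom{k}{r}$ is then immediate from the number of such subsets, and each such matrix is readily checked to satisfy the critical equations.

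The main subtlety I anticipate is dealing correctly with the smoothness hypothesis and the possibility of repeated singular values. A critical point in the sense of ED minimization is required to lie in the smooth locus of $\mathcal{X}_r$, so we must justify restricting to rank exactly $r$: if the rank dropped below $r$, the tangent space description above would fail and one would instead lie in the singular locus $\mathcal{X}_{r-1}$, where the notion of ED critical point degenerates. The hypothesis of distinct nonzero singular values is essential in the final step, since a coincidence $\sigma_i=\sigma_{i'}$ replaces the one-dimensional eigenspace by a plane and hence produces a positive-dimensional family of critical points rather than a discrete set. I would therefore state the count $\binom{k}{r}$ only under the genericity assumption on the $\sigma_i$'s, while the structural conclusion (every smooth critical point comes from selecting $r$ of the summands $\Sigma_{i_j}$) remains valid in general.
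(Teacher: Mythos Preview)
The paper does not give its own proof of this theorem: it is quoted as a classical result, with the phrasing ``We report the statement used in \cite[Theorem 2.9]{MR3456581}.'' There is therefore nothing in the paper to compare your argument against.

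Your strategy---orthogonal reduction to $U=\Sigma$, followed by the tangent-space description $T_X\mathcal{X}_r=\{P_1M+NQ_1^T\}$ and the resulting eigenvector equations for $P_1,Q_1$---is the standard one and is correct in outline. One point deserves a bit more care: you write the smooth critical point $X$ via a \emph{real} compact SVD $X=P_1D_1Q_1^T$ with $P_1^TP_1=Q_1^TQ_1=I_r$, but in the framework of the paper $\mathcal{X}_r$ is the \emph{complex} determinantal variety and the critical points are a priori complex. Over $\C$ with the bilinear (non-Hermitian) form $\langle\cdot,\cdot\rangle_F$ there is no guarantee that a rank-$r$ matrix admits such an orthonormal factorization (this is exactly the algebraic SVD issue mentioned in Remark~\ref{rmk: Algebraic SVD}). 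The fix is easy: the tangent-space description and the orthogonality conditions $P_1^T(\Sigma-X)=0$, $(\Sigma-X)Q_1=0$ only depend on the column and row spaces of $X$, not on orthonormality, so you can take any full-rank factorization $X=P_1CQ_1^T$ and argue that the column space of $X$ is $\Sigma\Sigma^T$-invariant (and similarly for rows). Under the distinct-singular-value hypothesis the eigenspaces of $\Sigma\Sigma^T$ are coordinate lines, forcing the column space of $X$ to be a coordinate subspace, and the rest of your argument goes through to show all critical points are the real matrices $\Sigma_{i_1}+\cdots+\Sigma_{i_r}$. With that adjustment the proof is complete.
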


Therefore, Theorem \ref{thm: Eckart-Young} solves the best rank-$r$ approximation problem and the nice structure of critical points leads to various interesting consequences.
In particular, assuming that $U$ is full rank, their number is independent from the largest dimension $n$.
Moreover, their linear span does not depend on the rank $r$; it is studied in \cite{MR3456581,DOT} in the more general context of tensor spaces, see also Proposition \ref{prop: linear_space_for_rank_one_critical_points}.

Consider a data matrix $U\in\R^{m\times n}$ and let $Z_{U,r}$ be the set of critical points of $d_U$ on $\mathcal{X}_r$.
We denote by $\langle Z_{U,r}\rangle$ the linear span of $Z_{U,r}$ in $\R^{m\times n}$.
A consequence of Theorem \ref{thm: Eckart-Young} is that $\langle Z_{U,r}\rangle$ does not depend on the rank $r$, namely $\langle Z_{U,r}\rangle=\langle Z_{U,1}\rangle$ for all $r\in[m]$.
The following result is a special case of \cite[Theorem 1.1]{DOT} and gives the equations of $\langle Z_{U,1}\rangle$.

\begin{proposition}\label{prop: linear_space_for_rank_one_critical_points}
Given $U\in\R^{m\times n}$, the ideal $\mathcal{I}(Z_{U,1})=\mathcal{I}(\langle Z_{U,1}\rangle)$ is generated by the linear forms in $\R[X]_1$
\begin{equation}\label{eq: linear relations unconstrained case}
r_U^{(i,j)}(X)\coloneqq(XU^T-UX^T)_{ij}\ \forall\,i,j\in[m]\,,\quad c_U^{(i,j)}(X)\coloneqq(X^TU-U^TX)_{ij}\ \forall\,i,j\in[n]\,.
\end{equation}
In particular, if $U$ is sufficiently generic, then (assuming $m\le n$)
\begin{enumerate}
    \item the $\binom{m}{2}$ forms $r_U^{(i,j)}(X)$ are linearly independent,
    \item $m(n-1)-\binom{m}{2}$ of the forms $c_U^{(i,j)}(X)$ are linearly independent,
    \item none of the forms $r_U^{(i,j)}(X)$ is linear combination of some of the forms $c_U^{(i,j)}(X)$, and viceversa.
\end{enumerate}
Therefore $\dim(\langle Z_{U,1}\rangle)=m$ if $U$ is sufficiently generic.
\end{proposition}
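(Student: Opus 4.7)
The plan is to prove the statement in two steps: (a) show that each listed linear form vanishes on every rank-one critical point of $d_U$, and hence on the linear span $\langle Z_{U,1}\rangle$; (b) show that for generic $U$ these forms cut out a subspace of the expected codimension $m(n-1)$, which forces them to generate the (degree-one, hence full) ideal of the linear variety $\langle Z_{U,1}\rangle$. Step (a) is immediate from Theorem \ref{thm: Eckart-Young}: writing the SVD as $U = A\Sigma B^T$, every rank-one critical point has the form $X = \sigma_i a_i b_i^T$, and orthogonality of $A,B$ yields $XU^T = \sigma_i^2 a_i a_i^T$ and $X^T U = \sigma_i^2 b_i b_i^T$. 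Both are symmetric, so every entry of $XU^T - UX^T$ and of $X^T U - U^T X$ vanishes on $X$; by linearity the same holds on all of $\langle Z_{U,1}\rangle$.

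For step (b), I would pass to the SVD-adapted coordinates $Y = A^T X B$. A short computation gives
\[
XU^T - UX^T = A(Y\Sigma^T - \Sigma Y^T)A^T, \qquad X^T U - U^T X = B(Y^T\Sigma - \Sigma^T Y)B^T.
\]
Since $A$ and $B$ are invertible, the simultaneous vanishing of the two families of linear forms is equivalent to the vanishing of the two bracketed matrices. For $i,j \in [m]$, the first set yields $\sigma_j y_{ij} = \sigma_i y_{ji}$ and the second yields $\sigma_j y_{ji} = \sigma_i y_{ij}$; combining them gives $(\sigma_i^2 - \sigma_j^2) y_{ij} = 0$, so for generic $U$ (with all $\sigma_i$ distinct and positive) $y_{ij} = 0$ whenever $i \neq j$ in $[m]$. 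The remaining nonzero entries of $Y^T\Sigma - \Sigma^T Y$ force $y_{ij} = 0$ for $j > m$. What is left is the $m$-dimensional subspace of $Y$'s supported only on the top-left diagonal, which under the inverse change of basis is exactly $\mathrm{span}\{\sigma_i a_i b_i^T\}_{i \in [m]} = \langle Z_{U,1}\rangle$; this gives the main statement and the dimension count.

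The independence claims (1)--(3) then follow from the same bookkeeping. The $\binom{m}{2}$ nontrivial $r$-forms (indexed by $i < j$ in $[m]$) are supported on disjoint variable pairs $\{y_{ij}, y_{ji}\}$ and are therefore linearly independent, proving (1). Among the $c$-forms, $\binom{m}{2}$ sit on the top $m\times m$ block with coefficient vectors $(\sigma_j, -\sigma_i)$ — linearly independent from the $r$-coefficients $(-\sigma_i, \sigma_j)$ on the same pair, generically — and an additional $m(n-m)$ contribute the equations $y_{ij}=0$ for $j>m$; summing gives $\binom{m}{2}+m(n-m) = m(n-1)-\binom{m}{2}$ independent forms beyond the $r$-forms, matching (2). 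For (3), if a single $r_U^{(i,j)}$ were in the span of the $c$-forms, its restriction to the pair $\{y_{ij}, y_{ji}\}$ would impose $(\sigma_j, -\sigma_i) = \alpha\,(-\sigma_i, \sigma_j)$, forcing $\sigma_i^2 = \sigma_j^2$, which fails generically; the reverse direction is symmetric.

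The main obstacle I anticipate is the careful accounting on the shared top-left $m\times m$ block, where both families involve the same variables but encode them through a $2\times 2$ matrix whose nondegeneracy is precisely what distinct singular values of $U$ buy. Everything else reduces to linear algebra once the SVD change of coordinates straightens out the matrices $Y\Sigma^T - \Sigma Y^T$ and $Y^T\Sigma - \Sigma^T Y$.
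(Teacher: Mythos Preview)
The paper does not give its own proof of this proposition; it simply records it as a special case of \cite[Theorem 1.1]{DOT}. Your direct SVD-based argument is therefore not competing against anything in the paper, and it is essentially correct: step (a) is exactly right, and the change of variables $Y=A^TXB$ in step (b) cleanly reduces the vanishing locus of all the linear forms to the diagonal matrices $Y=\mathrm{diag}(y_{11},\dots,y_{mm},0,\dots,0)$, which is precisely $\langle Z_{U,1}\rangle$ for generic $U$. The dimension counts for (1) and (2) are correct as written.

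There is one small slip in your argument for (3). You write ``if a single $r_U^{(i,j)}$ were in the span of the $c$-forms, its restriction to the pair $\{y_{ij},y_{ji}\}$ would impose \dots''. But $r_U^{(i,j)}$ in the original $X$-coordinates is not supported on a single pair $\{y_{ij},y_{ji}\}$ in the $Y$-coordinates; under conjugation by $A$ it becomes a linear combination of \emph{all} entries of $Y\Sigma^T-\Sigma Y^T$. The fix is immediate and already implicit in your pair-by-pair analysis: letting $R=\mathrm{span}\{r_U^{(i,j)}\}$ and $C=\mathrm{span}\{c_U^{(k,l)}\}$, your computation on each disjoint pair $\{y_{ij},y_{ji}\}$ (together with the fact that the $j>m$ variables appear only in $C$) shows $R\cap C=0$ whenever the singular values are distinct. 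Since each nonzero $r_U^{(i,j)}$ lies in $R$, it cannot lie in $C$, and symmetrically. Stating (3) via $R\cap C=0$ rather than via a single restricted pair closes the gap.
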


We explain an implication of Proposition~\ref{prop: linear_space_for_rank_one_critical_points} which is used in the proof of Corollary~\ref{corol: codim linear span}.

\begin{remark}\label{rmk: on linear relations with rows or columns}
Let $U\in\R^{m\times n}$ and let $U_{I,J}$ be a submatrix of $U$.
Consider the system
\[
r_{U_{I,J}}^{(i,j)}=0\quad\forall\,i,j\in[|I|]\,,\ c_{U_{I,J}}^{(i,j)}=0\quad\forall\,i,j\in[|J|]\,.
\]
If $|I|\le|J|$ and $U$ is sufficiently generic, then
\begin{enumerate}
    \item the first $\binom{|I|}{2}$ equations are linearly independent,
    \item there are $|I|(|J|-1)-\binom{|I|}{2}$ linearly independent equations in the second set,
    \item no equation in one set is linear combination of equations in the other set.
\end{enumerate}
\end{remark}

\section{Rank-one structured approximation and beyond}\label{sec: rank one constrained approx}

This section is divided into two subsections: In Section~\ref{subsec:rank-one}, we focus on rank-one approximation with zeros, and in Section~\ref{sec:rectangular-matrices}, on the simplest cases of rank-$r$  approximation with zeros for rectangular and block-diagonal matrices.

\subsection{Rank-1 structured approximation}\label{subsec:rank-one}

Rank-one approximation with zeros is built on the observation that non-zero entries of a rank-one matrix form a rectangular submatrix.
After introducing the notions of rectangular zero patterns in Definition~\ref{def: rectangular zero pattern} and minimal covers in Definition~\ref{def: minimal cover}, Proposition~\ref{prop: procedure rank-one approximate} gives the procedure for determining the best rank-one approximation in the structured setting. The main result in this subsection is Proposition~\ref{prop: ED degree rank 1 zero pattern} which gives the ED degree of $\mathcal{L}_1^S$ in terms of the minimal covers.
We recall that the variety $\mathcal{L}_r^S=\mathcal{X}_r\cap\mathcal{L}^S$ was defined in \eqref{eq: def L r S} for any $r\in[m]$.
For row/column and diagonal zero patterns, this results in explicit formulas (Corollaries~\ref{corol: rank-one row and column zero patterns} and~\ref{corol: ED degree rank 1 diagonal}). We end the section with Algorithm~\ref{algorithm: minimal_covers} for finding all minimal covers of a zero pattern $S$.

\begin{definition} \label{def: rectangular zero pattern}
We say that a zero pattern $S\subset [m]\times[n]$ is {\em rectangular} if the indices that are not in $S$ form a rectangular matrix.
More precisely, a rectangular zero pattern has the form $S=(S_1\times[n])\cup([m]\times S_2)$, for some $S_1\subset[m]$ and $S_2\subset[n]$.
Sometimes we denote this zero pattern also by $(S_1,S_2)$.
\end{definition}

\begin{definition} \label{def: minimal cover}
Let $S,T \subset [m]\times[n]$ be two zero patterns.
\begin{enumerate}
	\item The zero pattern $T$ is a {\em cover} of the zero pattern $S$, if $S\subset T$ and if $T$ is rectangular.
	\item The zero pattern $T$ is a {\em minimal cover} of $S$, if it is minimal among all covers of the zero pattern $S$ with respect to inclusion. We denote by $MC(S,m,n)$ the set of all minimal covers of $S\subset[m]\times[n]$.
\end{enumerate}
\end{definition}

\noindent For example, if $S=\{(1,1),(1,2),(2,2)\}$, then $MC(S,3,3)=\{([2],\emptyset),(\emptyset,[2]),(\{1\},\{2\})\}$ (see Figure \ref{fig: cover zero pattern}).

\begin{figure}[htbp]
    \centering
    \begin{tikzpicture}
    \matrix(m)[matrix of math nodes, left delimiter = (,right delimiter = ),row sep=5pt,column sep = 5pt, minimum width=width("-1")]{
    0&0&2\\
    1&0&1\\
    2&1&3\\};
    \draw (m-1-1.north west) -- (m-2-1.south west) -- (m-2-3.south east) -- (m-1-3.north east) -- (m-1-1.north west) -- cycle;
    \end{tikzpicture}
    \quad
    \begin{tikzpicture}
    \matrix(m)[matrix of math nodes,left delimiter = (,right delimiter = ),row sep=5pt,column sep = 5pt, minimum width=width("-1")]{
    0&0&2\\
    1&0&1\\
    2&1&3\\};
    \draw (m-1-1.north west) -- (m-3-1.south west) -- (m-3-2.south east) -- (m-1-2.north east) -- (m-1-1.north west) -- cycle;
    \end{tikzpicture}
    \quad
    \begin{tikzpicture}
    \matrix(m)[matrix of math nodes,left delimiter = (,right delimiter = ),row sep=5pt,column sep = 5pt, minimum width=width("-1")]{
    0&0&2\\
    1&0&1\\
    2&1&3\\};
    \draw (m-1-1.north west) -- (m-1-1.south west) -- (m-1-2.south west) -- (m-3-2.south west) -- (m-3-2.south east) -- (m-1-2.south east) -- (m-1-3.south east) -- (m-1-3.north east) -- (m-1-1.north west) -- cycle;
    \end{tikzpicture}
    \vspace*{-2mm}
    \caption{Minimal covers of a $3\times 3$ matrix with zero pattern $S=\{(1,1),(1,2),(2,2)\}$.}
    \label{fig: cover zero pattern}
\end{figure}

The main difficulty in studying the best rank-one approximation problem with assigned zero pattern $S$ lies in identifying all the minimal covers of $S$.

\begin{proposition}\label{prop: ED degree rank 1 zero pattern}
Let $S\subset[m]\times[n]$ be a zero pattern.
Consider the variety $\mathcal{X}_1$ of $m\times n$ matrices of rank at most one and the intersection $\mathcal{L}_{1}^S=\mathcal{X}_1\cap\mathcal{L}^S$. Then
\begin{equation}\label{eq: ED degree rank 1 zero pattern}
\mathrm{EDdegree}(\mathcal{L}_1^S)=\sum_{(A_r,A_c)\in MC(S,m,n)}\min(m-|A_r|,n-|A_c|)\,.
\end{equation}
\end{proposition}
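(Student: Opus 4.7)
The plan is to realise $\mathcal{L}_1^S$ as a finite union of simpler irreducible varieties indexed by $MC(S,m,n)$ and then to apply Lemmas~\ref{lemma: ED degree union} and~\ref{lemma: critical points projection} to reduce each summand to an ordinary rank-one ED computation on a rectangular matrix space, where the Eckart--Young--Mirsky theorem (Theorem~\ref{thm: Eckart-Young}) gives an explicit answer.

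First I would establish the set-theoretic decomposition
\[
\mathcal{L}_1^S \;=\; \bigcup_{(A_r,A_c)\in MC(S,m,n)} \mathcal{L}_1^{(A_r,A_c)},
\]
where $\mathcal{L}_1^{(A_r,A_c)}$ denotes the variety of rank-one matrices supported on the rectangle $([m]\setminus A_r)\times([n]\setminus A_c)$. The key observation is that any nonzero rank-one matrix $ab^T$ has zero pattern exactly $(Z_a\times[n])\cup([m]\times Z_b)$, where $Z_a=\{i:a_i=0\}$, $Z_b=\{j:b_j=0\}$, so $ab^T\in\mathcal{L}^S$ iff $(Z_a,Z_b)$ is a rectangular cover of $S$. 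Since every cover contains a minimal cover, the union above is exhaustive; conversely each $\mathcal{L}_1^{(A_r,A_c)}$ is contained in $\mathcal{L}_1^S$ because a minimal cover is itself a cover of $S$.

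Next I would verify the hypothesis of Lemma~\ref{lemma: ED degree union}, namely that $\mathcal{L}_1^{(A_r,A_c)}\not\subset\mathcal{L}_1^{(B_r,B_c)}$ for distinct minimal covers. An inclusion of these varieties forces $([m]\setminus A_r)\times([n]\setminus A_c)\subset([m]\setminus B_r)\times([n]\setminus B_c)$, hence $B_r\subset A_r$ and $B_c\subset A_c$, contradicting the minimality of $(A_r,A_c)$ (the two covers would be comparable as zero patterns). Lemma~\ref{lemma: ED degree union} then yields
\[
\mathrm{EDdegree}(\mathcal{L}_1^S)=\sum_{(A_r,A_c)\in MC(S,m,n)} \mathrm{EDdegree}\bigl(\mathcal{L}_1^{(A_r,A_c)}\bigr).
\]
For each summand, the ambient coordinate subspace $\mathcal{L}^{(A_r\times[n])\cup([m]\times A_c)}$ is transversal to the isotropic quadric since it is spanned by Frobenius-orthonormal basis matrices. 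Lemma~\ref{lemma: critical points projection} therefore identifies the critical points of $d_U$ on $\mathcal{L}_1^{(A_r,A_c)}\subset\C^{m\times n}$ with those of $d_{\pi_W(U)}$, which after removing the identically zero rows and columns is exactly the unstructured best rank-one approximation problem for a generic $(m-|A_r|)\times(n-|A_c|)$ matrix. By Theorem~\ref{thm: Eckart-Young} this has $\min(m-|A_r|,n-|A_c|)$ critical points, and summing completes the proof.

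The main obstacle I anticipate is the bookkeeping in the first step: one must be careful that every rank-one matrix in $\mathcal{L}^S$ is accounted for by \emph{some} minimal cover (not merely by a non-minimal one), and that irreducibility of each component together with the non-containment argument correctly discharges the hypothesis of Lemma~\ref{lemma: ED degree union}. The rest of the argument is a clean application of the two preliminary lemmas combined with the rank-one case of Eckart--Young--Mirsky.
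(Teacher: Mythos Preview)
Your proposal is correct and follows essentially the same strategy as the paper: decompose $\mathcal{L}_1^S$ into irreducible pieces indexed by minimal covers, verify the non-containment hypothesis of Lemma~\ref{lemma: ED degree union}, and evaluate each summand via the rank-one Eckart--Young--Mirsky count. Your write-up is in fact more explicit than the paper's, which simply asserts the bijection with irreducible components and the isomorphism with a smaller rank-one variety; your use of Lemma~\ref{lemma: critical points projection} to justify that last step is a welcome addition.
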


\begin{proof}
There is a bijection between the irreducible components of $\mathcal{L}_{1}^S$ and the elements of $MC(S,m,n)$.
More precisely, for every pair $(A_r,A_c)\in MC(S,m,n)$, the corresponding component of $\mathcal{L}_{1}^S$ is isomorphic to the variety of $(m-|A_r|)\times(n-|A_c|)$ matrices of rank at most one, and by Theorem \ref{thm: Eckart-Young} this component has ED degree equal to $\min(m-|A_r|,n-|A_c|)$.
Moreover, given two distinct minimal covers $T_1$ and $T_2$ in $MC(S,m,n)$ and their corresponding irreducible components $\mathcal{V}_1$ and $\mathcal{V}_2$, then neither $\mathcal{V}_1\subset\mathcal{V}_2$ nor $\mathcal{V}_2\subset\mathcal{V}_1$.
The statement follows by Lemma \ref{lemma: ED degree union}.
\end{proof}

\begin{corollary} \label{prop: procedure rank-one approximate}
Let $S \subset [m] \times [n]$. Given $U\in \R^{m\times n}$, its best rank-one approximation with zeros in $S$ is found by the following procedure:
\begin{enumerate}
    \item Identify all minimal covers of $S$.
    \item Find the best rank-one approximation for each of the minimal covers of $S$.
    \item Choose the best rank-one approximation over all the minimal covers of $S$.
\end{enumerate}
\end{corollary}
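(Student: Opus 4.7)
The plan is to read this off the decomposition already established in the proof of Proposition~\ref{prop: ED degree rank 1 zero pattern}. That proof exhibits a bijection between $MC(S,m,n)$ and the irreducible components of $\mathcal{L}_1^S$: each minimal cover $(A_r,A_c)$ corresponds to the component consisting of rank-one matrices supported inside the rectangle $([m]\setminus A_r)\times([n]\setminus A_c)$. So the feasible region of \eqref{low_rank_approximation_with_zeros} at rank one decomposes as
\[
\mathcal{L}_1^S \;=\; \bigcup_{(A_r,A_c)\in MC(S,m,n)} \mathcal{V}_{(A_r,A_c)},
\]
where $\mathcal{V}_{(A_r,A_c)}$ is the set of $m\times n$ matrices of rank at most one whose nonzero entries lie in $([m]\setminus A_r)\times([n]\setminus A_c)$.

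First I would justify that this decomposition captures every feasible point. A rank-one matrix $X$ factors as $X=ab^T$, so its support is a combinatorial rectangle $I\times J$ with $I=\{i : a_i\ne 0\}$ and $J=\{j : b_j\ne 0\}$. The constraint $x_{ij}=0$ for all $(i,j)\in S$ forces $S\cap(I\times J)=\emptyset$, equivalently $S\subset([m]\setminus I)\times[n]\;\cup\;[m]\times([n]\setminus J)$, which is a rectangular zero pattern covering $S$. Hence some minimal cover $(A_r,A_c)$ satisfies $A_r\subset[m]\setminus I$ and $A_c\subset[n]\setminus J$, so $X\in\mathcal{V}_{(A_r,A_c)}$.

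Next I would split the minimization. Since the objective $d_U$ is minimized over a finite union,
\[
\min_{X\in\mathcal{L}_1^S} d_U(X) \;=\; \min_{(A_r,A_c)\in MC(S,m,n)} \; \min_{X\in\mathcal{V}_{(A_r,A_c)}} d_U(X).
\]
For a fixed cover $(A_r,A_c)$, the entries $x_{ij}$ with $(i,j)\in(A_r\times[n])\cup([m]\times A_c)$ are forced to zero and contribute the constant $\sum_{(i,j)\in\text{cover}}u_{ij}^2$ to $d_U$; the remaining entries form an unconstrained rank-one matrix in $\R^{([m]\setminus A_r)\times([n]\setminus A_c)}$ that must best-approximate the submatrix $U_{[m]\setminus A_r,\,[n]\setminus A_c}$. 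By Theorem~\ref{thm: Eckart-Young} (Eckart--Young--Mirsky) applied to this submatrix, the inner minimum is achieved by the top singular component of $U_{[m]\setminus A_r,\,[n]\setminus A_c}$, yielding step (2) of the procedure. Step (3) is then just picking the smallest value across the finitely many covers.

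There is no real obstacle: the only subtle point is verifying that every rank-one feasible matrix is captured by some \emph{minimal} cover, which is immediate because any rectangular cover contains a minimal one. The procedure then follows by combining the union decomposition with the classical Eckart--Young--Mirsky theorem on each rectangular block. \qed
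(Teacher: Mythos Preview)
Your argument is correct and is exactly the reasoning the paper leaves implicit: the corollary is stated without proof, immediately after Proposition~\ref{prop: ED degree rank 1 zero pattern}, whose proof already supplies the bijection between minimal covers and irreducible components of $\mathcal{L}_1^S$. Your added details---that a rank-one matrix has rectangular support and hence lies in some $\mathcal{V}_{(A_r,A_c)}$, and that the inner minimization reduces to Eckart--Young on the corresponding submatrix (cf.\ Lemma~\ref{remark: rectangular-rank-1-approximation})---are precisely what makes the corollary follow.
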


\begin{corollary}
Let $S=\{(1,1)\}$. It has two minimal covers, i.e.,  $MC(S,m,n)=\{ (\{1\},\emptyset),(\emptyset,\{1\}) \}$. Given $U\in \R^{m\times n}$, its best rank-one approximation with a zero in $S$ is found by first identifying the best rank-one approximations for the two minimal covers of $S$ and then choosing out of the two rank-one matrices the one that minimizes the Euclidean distance to $U$.
\end{corollary}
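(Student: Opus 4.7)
The plan is to prove the statement in two clean steps, corresponding to its two assertions: first identify $MC(S,m,n)$ by unwrapping Definitions~\ref{def: rectangular zero pattern} and~\ref{def: minimal cover}, then invoke Corollary~\ref{prop: procedure rank-one approximate} to obtain the procedural description of the best rank-one approximation.

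For the first step, I would argue directly from the definitions. A cover of $S=\{(1,1)\}$ is a rectangular zero pattern $T=(S_1\times[n])\cup([m]\times S_2)$ containing $(1,1)$, which forces $1\in S_1$ or $1\in S_2$. Hence every cover of $S$ contains either $(\{1\},\emptyset)$ or $(\emptyset,\{1\})$. Both of these rectangular patterns are themselves covers of $S$, since each contains $(1,1)$. Minimality is then immediate: neither of the two rectangular patterns $(\{1\},\emptyset)$ or $(\emptyset,\{1\})$ is contained in the other (they correspond to disjoint first rows and first columns being zeroed), so the minimal covers are exactly $(\{1\},\emptyset)$ and $(\emptyset,\{1\})$, establishing $MC(S,m,n)=\{(\{1\},\emptyset),(\emptyset,\{1\})\}$.

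For the second step, I would apply Corollary~\ref{prop: procedure rank-one approximate}, which says that the best rank-one approximation of $U$ with zeros in $S$ is obtained by computing the best rank-one approximation for each minimal cover of $S$ and then choosing the one that minimizes the Euclidean distance to $U$. Since $|MC(S,m,n)|=2$ by the first step, this specializes directly to the claim.

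I don't expect a serious obstacle: both assertions are essentially a restatement of the preceding definitions and corollary. The only nontrivial point is to be careful that Definition~\ref{def: minimal cover} uses inclusion of zero patterns (subsets of $[m]\times[n]$), not inclusion of the pairs $(S_1,S_2)$, so that incomparability of $(\{1\}\times[n])$ and $([m]\times\{1\})$ as subsets of $[m]\times[n]$ is what certifies that both are minimal. With this bookkeeping in place, the proof is a one-line verification followed by an invocation of Corollary~\ref{prop: procedure rank-one approximate}.
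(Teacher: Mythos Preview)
Your proposal is correct and matches the paper's treatment: the corollary is stated there without proof, as an immediate specialization of Corollary~\ref{prop: procedure rank-one approximate} once the two minimal covers are identified, which is exactly what you do. One cosmetic point: the two rectangular patterns $\{1\}\times[n]$ and $[m]\times\{1\}$ are not disjoint (they share $(1,1)$), but you only need that neither is contained in the other, which you correctly note.
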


\begin{definition}\label{def:mask-matrix}
Let $S\subset[m]\times[n]$ be a zero pattern.
The {\em mask matrix} of $S$ is the $m\times n$ matrix $M^S$ with entries in $\{0,1\}$ whose $(i,j)$-th entry is the characteristic function $\chi_S(i,j)$ of $S$.
\end{definition}

\begin{definition}
Two zero patterns $S_1$ and $S_2$ are said to be {\em permutationally equivalent} if there exist permutation matrices $P_1$ and $P_2$ such that we can write $M^{S_1} = P_1 M^{S_2} P_2$.
\end{definition}

\begin{corollary}\label{corol: rank-one row and column zero patterns}
Let $S\subset[m]\times[n]$ be a zero pattern which is permutationally equivalent to the row zero pattern $\{(1,1),\ldots,(1,|S|)\}$.
Then
\begin{equation}\label{eq: ED degree rank 1 zero pattern one row}
\mathrm{EDdegree}(\mathcal{L}_1^S)=\min\{m,n-|S|\}+\min\{m-1,n\}\,.
\end{equation}
Similarly, let $S\subset[m]\times[n]$ be a zero pattern which is permutationally equivalent to the column zero pattern $\{(1,1),\ldots,(|S|,1)\}$.
Then
\begin{equation}\label{eq: ED degree rank 1 zero pattern one column}
\mathrm{EDdegree}(\mathcal{L}_1^S)=\min\{m,n-1\}+\min\{m-|S|,n\}\,.
\end{equation}
\end{corollary}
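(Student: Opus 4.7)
The plan is to reduce to Proposition~\ref{prop: ED degree rank 1 zero pattern} by enumerating the minimal covers of the zero patterns in question, after first observing that the ED degree is invariant under the permutational equivalence.

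First, I would note that if $S_1$ and $S_2$ are permutationally equivalent zero patterns, with $M^{S_1}=P_1 M^{S_2}P_2$, then the map $X\mapsto P_1 X P_2$ is an isometry of $(\R^{m\times n},\langle\cdot,\cdot\rangle_F)$ that sends $\mathcal{L}_1^{S_2}$ onto $\mathcal{L}_1^{S_1}$. Since the Frobenius inner product is preserved, critical points of $d_U$ on $\mathcal{L}_1^{S_1}$ correspond bijectively to critical points of $d_{P_1^T U P_2^T}$ on $\mathcal{L}_1^{S_2}$, so $\mathrm{EDdegree}(\mathcal{L}_1^{S_1})=\mathrm{EDdegree}(\mathcal{L}_1^{S_2})$. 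Therefore it suffices to treat the canonical representatives $S=\{(1,1),\ldots,(1,|S|)\}$ and $S=\{(1,1),\ldots,(|S|,1)\}$.

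Next, I would identify $MC(S,m,n)$ for the row pattern $S=\{(1,1),\ldots,(1,|S|)\}$. A cover $(A_r,A_c)$ must satisfy: for each $j\in[|S|]$, either $1\in A_r$ or $j\in A_c$. Case analysis on whether $1\in A_r$ shows that minimality forces exactly one of the following two alternatives: either $A_r=\{1\}$ and $A_c=\emptyset$, or $A_r=\emptyset$ and $A_c=\{1,\ldots,|S|\}$. Indeed, if $1\in A_r$, then every $(1,j)$ is covered by the row, so minimality of $A_c$ forces $A_c=\emptyset$, and then $A_r=\{1\}$; conversely, if $1\notin A_r$, then every $j\in[|S|]$ must lie in $A_c$, so $A_c\supseteq[|S|]$, and minimality gives equality together with $A_r=\emptyset$. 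Plugging these two covers into formula \eqref{eq: ED degree rank 1 zero pattern} gives
\[
\mathrm{EDdegree}(\mathcal{L}_1^S)=\min(m-1,n)+\min(m,n-|S|),
\]
which is \eqref{eq: ED degree rank 1 zero pattern one row}.

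The column case is completely symmetric: for $S=\{(1,1),\ldots,(|S|,1)\}$, the same reasoning (with the roles of rows and columns swapped) yields $MC(S,m,n)=\{(\emptyset,\{1\}),([|S|],\emptyset)\}$, and applying Proposition~\ref{prop: ED degree rank 1 zero pattern} gives \eqref{eq: ED degree rank 1 zero pattern one column}. There is no real obstacle here; the content is purely the combinatorial enumeration of minimal covers, which in these two special cases is transparent because only one row (resp.\ column) of $S$ is involved.
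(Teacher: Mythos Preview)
Your proposal is correct and follows exactly the route the paper intends: the corollary is stated without proof as an immediate consequence of Proposition~\ref{prop: ED degree rank 1 zero pattern}, and your enumeration of the two minimal covers for the row (resp.\ column) pattern is precisely the computation that justifies it. One tiny remark: in the extreme case $|S|=n$ (row pattern) the pair $(\emptyset,[|S|])$ is actually not minimal since it contains $(\{1\},\emptyset)$, but this does not affect the formula because its contribution $\min(m,n-|S|)$ vanishes; the analogous remark applies to the column case when $|S|=m$.
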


\begin{corollary}\label{corol: ED degree rank 1 diagonal}
Let $S\subset[m]\times[n]$ be a zero pattern which is permutationally equivalent to the diagonal zero pattern $\{(1,1),\ldots,(|S|,|S|)\}$.
Then
\begin{equation}\label{eq: ED degree rank 1 zero pattern diagonal}
\mathrm{EDdegree}(\mathcal{L}_1^S)=\sum_{j=0}^{|S|}\binom{|S|}{j}\min\{m-j,n-|S|+j\}\,.
\end{equation}
\end{corollary}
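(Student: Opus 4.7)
The plan is to apply Proposition~\ref{prop: ED degree rank 1 zero pattern} directly, reducing the problem to an enumeration of the minimal covers of the diagonal zero pattern. Since permuting rows and columns is an orthogonal transformation of $\R^{m\times n}$ (with respect to the Frobenius inner product) and preserves both the rank of a matrix and the property of having prescribed zeros in some fixed set of positions, the ED degree of $\mathcal{L}_1^S$ depends only on the permutational equivalence class of $S$. So we may assume $S=\{(1,1),\dots,(|S|,|S|)\}$ without loss of generality.

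Next, I would describe the set $MC(S,m,n)$ explicitly. A pair $(A_r,A_c)\in[m]\times[n]$ is a cover of $S$ iff for every $i\in[|S|]$ we have $i\in A_r$ or $i\in A_c$. Minimality then forces two conditions: (i) $A_r\subset[|S|]$ and $A_c\subset[|S|]$, since any extra row/column index outside $[|S|]$ can be removed without losing the covering property; and (ii) $A_r\cap A_c=\emptyset$, because if $i\in A_r\cap A_c$ then removing $i$ from either side still covers $(i,i)$ via the other. Together with the covering condition, (i) and (ii) say exactly that $(A_r,A_c)$ is an \emph{ordered partition} of $[|S|]$ into two blocks. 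Hence $MC(S,m,n)$ is in bijection with subsets $J\subseteq[|S|]$ via $J\mapsto(J,[|S|]\setminus J)$.

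Now I would plug this parametrization into formula \eqref{eq: ED degree rank 1 zero pattern}. For a cover indexed by $J$ with $|J|=j$ we have $|A_r|=j$ and $|A_c|=|S|-j$, contributing $\min(m-j,\,n-|S|+j)$. The number of subsets $J\subseteq[|S|]$ with $|J|=j$ is $\binom{|S|}{j}$, so grouping by $j$ yields
\[
\mathrm{EDdegree}(\mathcal{L}_1^S)=\sum_{j=0}^{|S|}\binom{|S|}{j}\min\{m-j,\,n-|S|+j\},
\]
which is the desired identity.

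There is no real obstacle here; the only point requiring care is the characterization of minimal covers in step two, and in particular justifying that $A_r$ and $A_c$ must be disjoint subsets of $[|S|]$. Once that combinatorial description is established, the result drops out immediately from Proposition~\ref{prop: ED degree rank 1 zero pattern}.
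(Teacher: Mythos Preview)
Your proposal is correct and follows exactly the approach the paper intends: the corollary is stated without proof as an immediate consequence of Proposition~\ref{prop: ED degree rank 1 zero pattern}, and your enumeration of the minimal covers of the diagonal pattern as ordered bipartitions $(J,[|S|]\setminus J)$ of $[|S|]$ is the expected combinatorial step. One tiny notational slip: you wrote ``$(A_r,A_c)\in[m]\times[n]$'' where you mean $A_r\subset[m]$ and $A_c\subset[n]$.
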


Enumerating minimal covers of a zero pattern translates to the problem of enumerating minimal vertex covers of a bipartite graph.
A bipartite graph $G$ can be associated to a zero pattern of an $m \times n$-matrix $X=(x_{ij})$ in the following way: the bipartite graph $G$ has $m$ and $n$ vertices in the two parts, corresponding to the rows and columns of the matrix.
The edges of $G$ correspond to the zero entries of the matrix, i.e. $(i,j)\in E(G)$ if and only if $x_{ij}=0$.
A (minimal) cover of a zero pattern is then equivalent to a (minimal) vertex cover of the corresponding bipartite graph.
Since counting vertex covers in a bipartite graph is \texttt{\#}P-complete~\cite{provan1983complexity}, it follows that counting covers of a zero pattern is \texttt{\#}P-complete.
Recall that $\#P$ is the class of counting problems, where the goal is to count the number of solutions of a problem.
We know that $NP \subseteq \#P$ but we are not aware how high in the hierarchy $\#P$ is. We refer the interested reader to \cite[Chapter~9]{ArBa-cc-2009} for further details.

We suggest Algorithm~\ref{algorithm: minimal_covers}, that is based on dynamic programming, to find all minimal covers of a zero pattern $S$.
To simplify notation, we present the algorithm for the bipartite graph $G$ corresponding to the zero pattern $S$.
In particular, we use the following notation.
Let $G=(U,V,E)$, where $U=\{u_1,\ldots,u_m\}$ and $V=\{v_1,\ldots,v_n\}$ are the two parts of vertices.
For $u \in U$, we denote by $\mathcal{N}(u)$ the set of neighbors of $u$.
Let $U' \subset U$ and $V' \subset V$.
We denote by $G[U',V']$ the induced subgraph of $G$, i.e., the graph whose vertex set is $U' \cup V'$ and whose edge set is the subset of $E$ that consists of edges whose both endpoints are in $U' \cup V'$.
We consider the graph $G[U',V']$ as a bipartite graph with $U'$ and $V'$ being the two parts of vertices.

\begin{center}
\begin{minipage}{.75\linewidth}
\begin{algorithm}[H]
\caption{Minimal covers of a bipartite graph $G=(U,V,E)$}\label{algorithm: minimal_covers}
\begin{algorithmic}[1]
\STATE{{\bf procedure} \textsc{MinimalCovers}($G=(U,V,E)$)}
\quad\IF{$G$ is null graph}
\RETURN $\{(\emptyset,\emptyset)\}$
\ELSE
\STATE{$MC= \emptyset$}
\STATE{$MC_1= \textsc{MinimalCovers}(G[U\setminus\{u_1\},V])$}
\FOR{$(S_1,S_2) \in MC_1$}
\STATE{append $(S_1 \cup \{u_1\},S_2)$ to $MC$}
\ENDFOR
\STATE{$MC_2= \textsc{MinimalCovers}(G[U\setminus\{u_1\},V\setminus\mathcal{N}(u_1)])$}
\FOR{$(S_1,S_2) \in MC_2$}
\STATE{append $(S_1,S_2 \cup \mathcal{N}(u_1))$ to $MC$}
\ENDFOR
\RETURN $MC$
\ENDIF
\STATE{{\bf end procedure}}
\end{algorithmic}
\end{algorithm}
\end{minipage}
\end{center}

The following example illustrates that it is not enough, even to consider minimal covers with the least number of elements.

\begin{example}
Consider the $3\times 4$ matrix
\[
U =
\begin{pmatrix}
1 & -1 & -2 & -2\\
1 & 0 & 1 & -2\\
2 & 0 & 0 & 2
\end{pmatrix}\,.
\]
We look for the closest rank-one matrix to $U$ with zero pattern $S=\{(1,1),(1,2)\}$.
We have $MC(S,3,4)=\{(\{1\},\emptyset),(\emptyset,[2])\}$.
In particular, the first minimal cover consists of four elements, while the second minimal cover consists of six elements.
One verifies that the closest critical point to $U$ is of the second type and is equal to
\[
X =
\begin{pmatrix}
0 & 0 & -0.627896 & -2.36438\\
0 & 0 & -0.430261 & -1.62017\\
0 & 0 & 0.496139 & 1.86824
\end{pmatrix}\,.
\]
\end{example}

\subsection{Rank-\texorpdfstring{$r$}{r} structured approximation for rectangular and block-diagonal matrices} \label{sec:rectangular-matrices}

We finish this section with two results for structured best rank-$r$ approximation problem in the simplest cases where non-zero entries form rectangular and block-diagonal submatrices.

\begin{lemma}[Rectangular low-rank approximation]\label{remark: rectangular-rank-1-approximation}
Let $S \subseteq [m] \times [n]$ be such that $([m] \times [n]) \backslash S = T' \times T''$ for some $T' \subseteq [m]$ and $T'' \subseteq [n]$. Given $U\in \R^{m\times n}$, let $U' \in \R^{m\times n}$ be the matrix that is obtained from $U$ by setting the entries in $S$ equal to zero. The best rank-$r$ approximation of $U$ with zeros in $S$ is equal to the best (unstructured) rank-$r$ approximation of $U'$.
If the non-zero singular values of $U'$ are distinct, then by Theorem \ref{thm: Eckart-Young} there are $\binom{\min(|T'|,|T''|)}{r}$ critical points and they are given by the Singular Value Decomposition \eqref{eq: free SVD} of $U'$.
\end{lemma}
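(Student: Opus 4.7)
The plan is to reduce the structured problem to an unstructured one on a smaller (or equivalently, block-padded) matrix, using the orthogonality of the ambient decomposition $\R^{m\times n} = W \oplus W^\perp$, where $W$ is the coordinate subspace of matrices supported on $T'\times T''$. Observe that the assumption $([m]\times[n])\setminus S = T'\times T''$ means exactly that $\mathcal{L}^S = W$, so $\mathcal{L}_r^S = \mathcal{X}_r\cap W$. By construction, $U'$ is the orthogonal projection of $U$ onto $W$ with respect to $\langle\cdot,\cdot\rangle_F$, since $U - U'$ is supported on $S$, hence lies in $W^\perp$.

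First, I would invoke Lemma~\ref{lemma: critical points projection} with $V = \C^{m\times n}$, the transversal (linear) subspace $W$, and the variety $\mathcal{L}_r^S \subset W$: this yields that the critical points of $d_U$ on $\mathcal{L}_r^S$ coincide with the critical points of $d_{U'}$ on $\mathcal{L}_r^S$. Alternatively, this can be seen by a direct Pythagorean computation: for any $X \in W$,
\[
\|X - U\|_F^2 = \|X - U'\|_F^2 + \|U' - U\|_F^2,
\]
where the cross term vanishes because $X - U' \in W$ and $U' - U \in W^\perp$; since $\|U'-U\|_F^2$ is a constant, minimizing (or taking critical points of) the left-hand side over $\mathcal{L}_r^S$ is equivalent to doing the same with $d_{U'}$.

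Next, after permuting rows and columns so that $T'$ consists of the first $|T'|$ rows and $T''$ of the first $|T''|$ columns, $U'$ takes the block form
\[
U' = \begin{pmatrix} \tilde U & 0 \\ 0 & 0 \end{pmatrix},
\]
where $\tilde U$ is the $|T'|\times|T''|$ submatrix $U_{T',T''}$. The variety $\mathcal{L}_r^S$ is isomorphic, via this block embedding, to the variety of $|T'|\times|T''|$ matrices of rank at most $r$, and the Frobenius inner product restricts to the Frobenius inner product on that block. Hence the critical points of $d_{U'}$ on $\mathcal{L}_r^S$ are in bijection with critical points of $d_{\tilde U}$ on the rank-$r$ variety in $\R^{|T'|\times|T''|}$. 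Applying the Eckart--Young--Mirsky Theorem~\ref{thm: Eckart-Young} to $\tilde U$, when its nonzero singular values are distinct, produces exactly $\binom{\min(|T'|,|T''|)}{r}$ critical points, each obtained from the SVD of $\tilde U$ by selecting $r$ singular components; padding back by zero blocks gives critical points of $d_U$ supported in $T'\times T''$, which are precisely those produced by the SVD \eqref{eq: free SVD} of $U'$ (since the nonzero singular values of $U'$ and $\tilde U$ coincide). Choosing the $r$ largest singular components yields the global minimizer and shows that the structured best rank-$r$ approximation of $U$ equals the unstructured best rank-$r$ approximation of $U'$.

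There is no real obstacle here, only a bookkeeping point: one must check that the unstructured best rank-$r$ approximation of $U'$ automatically lies in $\mathcal{L}^S$, which is transparent from the block form above since truncating singular values preserves the zero rows and columns.
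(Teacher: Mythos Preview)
Your argument is correct. The paper states this lemma without proof (note that even its label reads \texttt{remark: rectangular-rank-1-approximation}), treating it as an immediate observation: the reduction is implicit in Lemma~\ref{lemma: critical points projection} together with Theorem~\ref{thm: Eckart-Young}, which is precisely the route you take. Your Pythagorean/projection step and the block-embedding bookkeeping are exactly the details the authors leave to the reader.
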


\begin{lemma}[Block-diagonal low-rank approximation]\label{remark: block-rank-1-approximation}
Let $S \subseteq [m] \times [n]$ be such that $([m] \times [n]) \backslash S = (T'_1 \times T''_1) \cup (T'_2 \times T''_2) \cup \ldots \cup (T'_s \times T''_s)$ for some non-empty pairwise disjoint $T'_1, T'_2, \ldots, T'_s \subseteq [m]$ and non-empty pairwise disjoint $T''_1, T''_2, \ldots, T''_s \subseteq [n]$. 
Consider all vectors $(r_1,\ldots,r_s) \in \N_0^s$ such that $\sum r_i=r$.
For a fixed vector $(r_1,\ldots,r_s)$, let $Z(r_1,\ldots,r_s)$ be the set of matrices such that for $1 \leq i \leq s$, the projection to $T'_i \times T''_i$ is a critical point of the unstructured rank-$r_i$ approximation problem for $U|_{T'_i \times T''_i}$. The union of $Z(r_1,\ldots,r_s)$ over all vectors $(r_1,\ldots,r_s) \in \N_0^s$ satisfying $\sum r_i=r$ gives the set of all critical points for the the structured best rank-$r$ approximation problem (\ref{low_rank_approximation_with_zeros}) for $U$.
\end{lemma}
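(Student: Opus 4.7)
The plan is to reduce the block-diagonal structured problem to $s$ independent unstructured rank-$r_i$ problems, one per block. First I would permute the rows and columns of $X$ and $U$ so that $T'_1,\ldots,T'_s$ and $T''_1,\ldots,T''_s$ become consecutive index intervals. Then every $X\in\mathcal{L}^S$ is literally block-diagonal with diagonal blocks $X_i \coloneqq X|_{T'_i\times T''_i}$ and zeros elsewhere. Two elementary identities follow: $\rank(X) = \sum_{i=1}^s \rank(X_i)$, and
\[
d_U(X) \;=\; \sum_{i=1}^s d_{U_i}(X_i) \;+\; \sum_{(j,k)\in S} u_{jk}^2,
\]
where $U_i \coloneqq U|_{T'_i\times T''_i}$ and the last term is a constant independent of $X$. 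Hence minimizing $d_U$ on $\mathcal{L}_r^S$ is a block-separable problem subject only to $\sum_i \rank(X_i)\le r$.

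Next I would stratify the feasible variety as a union
\[
\mathcal{L}_r^S \;=\; \bigcup_{(r_1,\ldots,r_s)\in\N_0^s,\,\sum_i r_i=r} V_{r_1,\ldots,r_s},
\]
where $V_{r_1,\ldots,r_s}$ is the subvariety of $\mathcal{L}^S$ carved out by $\rank(X_i)\le r_i$ for each $i$; it is isomorphic to the Cartesian product of determinantal varieties of rank-$r_i$ matrices of size $|T'_i|\times|T''_i|$. The key geometric input is that at a smooth point $X$ of $V_{r_1,\ldots,r_s}$ the Zariski tangent space splits as the direct sum of the tangent spaces of the factor determinantal varieties, sitting inside their respective diagonal blocks, and vanishes on all coordinates indexed by $S$.

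Now apply the Lagrange condition $U-X\perp T_X V_{r_1,\ldots,r_s}$ with respect to the Frobenius inner product. Since this inner product is coordinate-wise, the orthogonality decouples: on each block $T'_i\times T''_i$ it becomes $U_i - X_i \perp T_{X_i}\mathcal{X}_{r_i}$, which is exactly the criticality condition for the unstructured rank-$r_i$ approximation of $U_i$; on the remaining coordinates $(j,k)\in S$ the condition is automatic because tangent vectors vanish there. Therefore the critical points of $d_U$ on $V_{r_1,\ldots,r_s}$ are exactly $Z(r_1,\ldots,r_s)$, and taking the union over all compositions with $\sum r_i=r$ (in the spirit of Lemma~\ref{lemma: ED degree union}) recovers every critical point on $\mathcal{L}_r^S$.

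The main obstacle I foresee is making the tangent-space decomposition rigorous and handling the edge case of compositions where a critical point has $\sum_i \rank(X_i) < r$. For the former, one argues that $V_{r_1,\ldots,r_s}$ embeds diagonally in $\mathcal{L}^S$ with the block structure mirrored exactly in the Jacobian of the defining $(r_i+1)$-minors of each block, so the tangent space splits as a direct sum as claimed. For the latter, Theorem~\ref{thm: Eckart-Young} shows that an Eckart-Young critical point on block $i$ of rank $s_i<r_i$ still lies inside $V_{r_1,\ldots,r_s}$ for any $r_i\ge s_i$, so inflating block ranks up to $\sum r_i=r$ covers all lower-rank situations without loss.
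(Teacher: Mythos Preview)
The paper states this lemma without proof, so there is nothing to compare against; your proposal supplies the argument that the authors left implicit. The approach is sound: the block structure forces $\rank(X)=\sum_i\rank(X_i)$ and splits both $d_U$ and the tangent space of each stratum $V_{r_1,\ldots,r_s}$ into independent block summands, so criticality decouples exactly as you describe, and Lemma~\ref{lemma: ED degree union} lets you pass from the components back to $\mathcal{L}_r^S$.

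Two small points worth tightening. First, not every composition with $\sum r_i=r$ yields a distinct irreducible component: if $r_i\ge\min(|T'_i|,|T''_i|)$ the $i$-th constraint is vacuous, so different compositions can give the same $V_{r_1,\ldots,r_s}$, and some may be contained in others. This does not affect the \emph{union} of the $Z(r_1,\ldots,r_s)$, but it means you should invoke Lemma~\ref{lemma: ED degree union} only after passing to the genuine irreducible components (the maximal strata). Second, the identification of critical points on $\mathcal{L}_r^S$ with the union of critical points on the components uses that, for generic $U$, each block-wise Eckart--Young critical point has $\rank(X_i)=r_i$ exactly and hence avoids the intersection of two components; you allude to this via Theorem~\ref{thm: Eckart-Young}, but it is worth saying explicitly that genericity of $U$ (or of each $U_i$) is what guarantees smoothness on the union.
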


\begin{example}
Consider the $3 \times 3$ matrix
$$
U = 
\begin{pmatrix}
1 & 2 & 3 \\
4 & 5 & 6 \\
7 & 8 & 9
\end{pmatrix}.
$$
Let $S=\{(1,3),(2,3),(3,1),(3,2)\}$. We look for the closest rank-two matrix to $U$ with zero pattern $S$. The support of this closest rank-two matrix is block-diagonal, i.e., the non-zero entries are  $([3]\times [3])\backslash S = (\{1,2\} \times \{1,2\}) \cup (\{3\} \times \{3\})$. There are $s=2$ blocks and three vectors $(r_1,r_2) \in \N_0^2$ satisfying $r_1+r_2=2$, namely the vectors $(2,0),(1,1)$ and $(0,2)$. The critical point corresponding to the vector $(2,0)$ is
$$
C_1 = 
\begin{pmatrix}
1 & 2 & 0 \\
4 & 5 & 0 \\
0 & 0 & 0
\end{pmatrix}.
$$
The critical points corresponding to the vector $(1,1)$ are
$$
C_2 = \begin{pmatrix}
1.3332 & 1.7455 & 0\\ 
3.8857 & 5.0873 & 0\\
0 & 0 & 9
\end{pmatrix}
\quad \text{and} \quad
C_3 = \begin{pmatrix}
-0.3332 & 0.2545 & 0\\ 
0.1143 & -0.0873 & 0\\
0 & 0 & 9
\end{pmatrix}.
$$
There are no critical points corresponding to the vector $(0,2)$, since the size of the last block is one and hence it cannot have rank two. The critical point that minimizes the squared Frobenius distance to $U$ is $C_2.$
\end{example}

\section{Special relations among critical points}\label{sec: special relations}

In this section we provide (some of) the generators of the ideal of critical points on $\mathcal{L}_r^S$ of $d_U$.
In particular, in Sections~\ref{sec:linear_relations} and~\ref{sec: affine relations} we concentrate on particular linear and affine relations among critical points respectively, and in Section~\ref{sec: special nonlinear relations} on some special nonlinear relations.
Observations for generic linear constraints not necessarily coming from assigned zero patterns are given in Section~\ref{sec: general subspaces}.

We stress that in our statements we always consider a real $m\times n$ matrix $U$. However, as we explained in Section \ref{sec: unstructured low-rank approximation}, our optimization problem is defined in the ambient space of $m\times n$ complex matrices, since in the structured setting the critical points of $d_U$ are not necessarily real.

\subsection{Linear relations among critical points} \label{sec:linear_relations}

Let $Z_{U,r}^S$ be the set of critical points of $d_U$ on $\mathcal{L}_r^S$. When $S=\emptyset$ we simply write $Z_{U,r}$ as in Section~\ref{sec: unstructured low-rank approximation}.
The study of the linear span $\langle Z_{U,r}^{S}\rangle$ is more involved when $S\neq\emptyset$.
The main result in this section is Theorem~\ref{thm: linear_space_for_rank_r_critical_points} which states that certain linear equations from the unstructured setting in Proposition~\ref{prop: linear_space_for_rank_one_critical_points} are satisfied by the rank-$r$ critical points of $d_U$ in the structured setting. In the rest of the section, we investigate when do these linear equations define $\langle Z_{U,r}^{S}\rangle$ (Conjecture~\ref{conj: linear span}, Examples~\ref{example: 3x3 one zero rank two}-\ref{example: 3x3 two zeros row basis}) and give a lower bound on the codimension of $\langle Z_{U,r}^{S}\rangle$ (Corollary~\ref{corol: codim linear span}).

\begin{definition}
Let $S\subset[m]\times[n]$ be a zero pattern.
Recall that $M^S$ denotes the mask matrix of $S$ as introduced in Definition~\ref{def:mask-matrix}.
We introduce the following equivalence relations $\sim_R^S $ and $\sim_C^S $ in the sets $[m]$ and $[n]$, respectively:
\begin{itemize}
\item $i\sim_R^S j$ if and only if the $i$-th and the $j$-th rows of $M^S$ coincide,
\item $i\sim_C^S j$ if and only if the $i$-th and the $j$-th columns of $M^S$ coincide.
\end{itemize}
\end{definition}

\begin{definition}\label{def: critical space}
We define {\em critical space} of $U\in\R^{m\times n}$ to be the linear space $H_U^{S}$ defined by relations
\begin{equation}\label{eq: linear relations constrained case}
\left\langle r_{U}^{(i,j)}\mid i\sim_R^S j\right\rangle+\left\langle c_{U}^{(i,j)}\mid i\sim_C^S j\right\rangle+\left\langle x_{ij}\mid(i,j)\in S\right\rangle\,.
\end{equation}
\end{definition}

Observe that $H_U^{S}$ does not depend on the rank $r$. Moreover, the previous definition is inspired by \cite[Definition 2.8]{DOT} in the context of unstructured low-rank tensor approximation.
In particular, for $S=\emptyset$ we denote the critical space simply by $H_U$.

Proposition \ref{prop: linear_space_for_rank_one_critical_points} tells us that $\langle Z_{U,r}\rangle=H_U$ for all $r\in[m-1]$.
The next result shows that at least one of the two inclusions in the last equality is still true when $S\neq\emptyset$.

\begin{theorem}\label{thm: linear_space_for_rank_r_critical_points}
Let $S\subset[m]\times[n]$, $U\in\R^{m\times n}$ and $r\in [m-1]$. Then $\langle Z_{U,r}^{S}\rangle\subset H_U^{S}$.
\end{theorem}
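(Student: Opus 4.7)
The plan is to verify that every critical point $X \in Z_{U,r}^S$ satisfies each of the three types of defining linear equations of $H_U^S$ listed in \eqref{eq: linear relations constrained case}. The vanishing $x_{ij} = 0$ for $(i,j) \in S$ is immediate from $X \in \mathcal{L}^S$. For the remaining two families I will exploit the Lagrange orthogonality condition: at a smooth critical point, $\langle U-X, Y\rangle_F = 0$ for every $Y$ in the tangent space $T_X\mathcal{L}_r^S = T_X\mathcal{X}_r \cap \mathcal{L}^S$.

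The tangent space $T_X\mathcal{X}_r$ at a rank-$r$ matrix $X$ contains every matrix of the form $MX$ (with $M \in \R^{m\times m}$) and $XN$ (with $N \in \R^{n \times n}$), as seen by differentiating a rank factorization $X=AB$ in the directions $\dot{A}=MA$ and $\dot{B}=BN$. I would use the specific directions $Y_{ij}^{\mathrm{row}} \coloneqq (E_{ij}-E_{ji})X$ when $i \sim_R^S j$ and $Y_{ij}^{\mathrm{col}} \coloneqq X(E_{ij}-E_{ji})$ when $i \sim_C^S j$. The key check is that $Y_{ij}^{\mathrm{row}} \in \mathcal{L}^S$: its only nonzero rows are row $i$ (which equals row $j$ of $X$) and row $j$ (which equals minus row $i$ of $X$). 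The equivalence $i \sim_R^S j$ says exactly that the set of columns $\ell$ with $(i,\ell) \in S$ coincides with the set of columns with $(j,\ell) \in S$, so swapping these rows preserves the zero pattern $S$ once $X \in \mathcal{L}^S$. The symmetric argument handles $Y_{ij}^{\mathrm{col}}$.

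Substituting $Y = Y_{ij}^{\mathrm{row}}$ into the orthogonality condition and using $\mathrm{trace}(E_{ji}P) = P_{ij}$ gives
\[
0 = \mathrm{trace}\bigl((E_{ji}-E_{ij})(U-X)X^T\bigr) = \bigl((U-X)X^T\bigr)_{ij} - \bigl((U-X)X^T\bigr)_{ji}.
\]
Because $XX^T$ is symmetric, the $XX^T$-contributions cancel, leaving $(UX^T)_{ij} - (UX^T)_{ji} = -r_U^{(i,j)}(X)$; hence $r_U^{(i,j)}(X) = 0$. The analogous computation with $Y_{ij}^{\mathrm{col}}$ yields $c_U^{(i,j)}(X) = 0$.

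The main subtlety is dealing with critical points at which $X$ is a singular point of $\mathcal{L}_r^S$ (for instance, points where $\mathrm{rank}(X) < r$ or where $\mathcal{X}_r$ and $\mathcal{L}^S$ fail to meet transversely), since the Lagrange argument strictly requires a smooth point. The cleanest resolution is to observe that $Z_{U,r}^S$ is a Zariski-closed algebraic set, while $r_U^{(i,j)}$, $c_U^{(i,j)}$ and $x_{ij}$ are polynomials; since they vanish on the Zariski-dense open subset of smooth critical points, they vanish on all of $Z_{U,r}^S$, and therefore $\langle Z_{U,r}^S\rangle \subset H_U^S$ as claimed.
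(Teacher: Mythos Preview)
Your argument is correct and is essentially the paper's proof recast in geometric language: your tangent vector $Y_{ij}^{\mathrm{row}}=(E_{ij}-E_{ji})X$ is exactly $-X^{i\leftrightarrow j}$ in the paper's notation, and the paper's Laplace-expansion step is nothing more than the verification that this vector annihilates the Jacobian rows coming from the $(r+1)$-minors, i.e.\ lies in $T_X\mathcal{X}_r$---a fact you obtain more directly by differentiating a rank factorization. One remark: your closing paragraph is unnecessary and slightly muddled, since by the paper's convention (Section~\ref{sec: ED optimization}) critical points are smooth by definition; in any case $Y_{ij}^{\mathrm{row}}$ is the velocity of the curve $t\mapsto(I+t(E_{ij}-E_{ji}))X$, which lies in $\mathcal{L}_r^S$ for every $t$ (left multiplication by an invertible matrix preserves rank, and $i\sim_R^S j$ ensures the zero pattern is preserved), so no transversality or Zariski-closure argument is needed.
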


\begin{proof}
Let $L_1,\ldots,L_s$ be the constraints that set $s$ entries of the matrix to be equal to zero.
We denote by $\mathrm{Jac}_{\mathcal{X}_r}(X)$ and $\mathrm{Jac}_{\mathcal{L}^S}(X)$ the Jacobian matrices of $\mathcal{X}_r$ and $\mathcal{L}^S$ evaluated at $X$, respectively.
The rank-$r$ critical points $X\in\mathcal{L}_r^S$ of $d_U$ satisfy the equality constraints
\begin{equation}\label{eq: system Lagrange}
\begin{cases}
M_{I,J}(X)=0 & \forall\,|I|=|J|=r+1\\
L_k(X)=0 & \forall\,k\in[s]
\end{cases}
\quad
\mleft[
\begin{array}{c|c|c}
\lambda & \mu  & 1
\end{array}
\mright]
\mleft[
\begin{array}{c}
\mathrm{Jac}_{\mathcal{X}_r}(X)\\
\hline
\mathrm{Jac}_{\mathcal{L}^S}(X)\\
\hline
X-U
\end{array}
\mright]
=
\mleft[
\begin{array}{c|c|c}
0 & 0 & 0
\end{array}
\mright]\,,
\end{equation}
where $\lambda=(\lambda_{I,J})_{I,J}$ and $\mu=(\mu_1,\ldots,\mu_s)$ are vectors of Lagrange multipliers.

We denote by $v$ the vector of polynomials that is obtained when multiplying the vector and the augmented Jacobian matrix in \eqref{eq: system Lagrange}.
Its entries are naturally indexed by $(1,1)$,\ldots,$(m,n)$.
Let
\[
X^{i \leftrightarrow j}\coloneqq
\begin{bmatrix}
0 & \cdots & 0 & -x_{j1} & \cdots & -x_{jn} & 0 & \cdots & 0 & x_{i1} & \cdots & x_{in} & 0 & \cdots & 0\\
\end{bmatrix}^T
\]
be the vector with the entry $-x_{jk}$ at the position $(i,k)$ and the entry $x_{ik}$ at the position $(j,k)$ for all $k\in[n]$.

We show that $v \cdot X^{i \leftrightarrow j}$ is equal to a linear constraint in \eqref{eq: linear relations constrained case} plus some $(r+1)$-minors $M_{I,J}(X)$ and linear constraints $L_k(X)$ multiplied with Lagrange multipliers.
To do this, we study the products of the rows of the augmented Jacobian with the vector $X^{i \leftrightarrow j}$.

First, observe that the last row of the augmented Jacobian multiplied with $X^{i \leftrightarrow j}$ is precisely the linear form $r_{U}^{(i,j)}$.
Secondly, we show that the rows of the augmented Jacobian corresponding to minors multiplied with $X^{i \leftrightarrow j}$ are either zero or a sum of $(r+1)$-minors.
Let $A=\{a_1,\ldots,a_{r+1}\}\subset[m]$ and $B=\{b_1,\ldots,b_{r+1}\}\subset[n]$.
We consider the product
\begin{equation} \label{eq: jacobian_of_a_minor}
\begin{bmatrix}
\frac{\partial M_{A,B}(X)}{\partial x_{i1}} & \cdots & \frac{\partial M_{A,B}(X)}{\partial x_{in}}
\end{bmatrix}
\cdot X^{(j)}\,.
\end{equation}
If $i \notin A$, then the product~(\ref{eq: jacobian_of_a_minor}) is equal to zero.
Otherwise $i \in A$ and the product~(\ref{eq: jacobian_of_a_minor}) can be seen as the Laplace expansion of the matrix with rows in $(A\setminus\{i\})\cup\{j\}$ considered as a multiset and columns in $B$.
Hence if $j\notin A$, then the product~(\ref{eq: jacobian_of_a_minor}) is equal to the minor corresponding to rows in $(A\setminus\{i\})\cup\{j\}$ and columns in $B$.
Finally, if $j\in A$, then the product~(\ref{eq: jacobian_of_a_minor}) is zero again, because the row indexed by $j$ appears twice.

Finally, we consider the rows of the augmented Jacobian corresponding to constraints that $x_{kl}=0$.
The Jacobian of $x_{kl}$ consists of the entry $(k,l)$ being equal to one and all other entries being equal to zero.
If $i \neq k$ and $j \neq k$, then the Jacobian of this constraint  multiplied by $X^{i \leftrightarrow j}$ is clearly zero.
Otherwise the Jacobian of $x_{kl}$ multiplied by $X^{i \leftrightarrow j}$ is either $x_{il}$ of $x_{jl}$.
However both $x_{il}=x_{jl}=0$ by the assumption $i\sim_R^S j$.

This proves that $r_{U}^{(i,j)}\in\mathcal{I}(Z_{U,r}^{S})$ for all $i\sim_R^S j$. Similarly, one verifies that $c_{U}^{(i,j)}\in\mathcal{I}(Z_{U,r}^{S})$ for all $i\sim_C^S j$ by applying the same argument with the vector
\[
X_{i \leftrightarrow j}\coloneqq
\begin{bmatrix}
-x_{1j} & 0 & \cdots & 0 & x_{1i} & 0 & \cdots & 0 & -x_{nj} & 0 & \cdots & 0 & x_{ni} & 0 & \cdots & 0 \\
\end{bmatrix}^T\!.\vspace{-10pt}
\]
\qedhere
\end{proof}

\begin{conjecture}\label{conj: linear span}
Let $S\subset[m]\times[n]$, $U\in\R^{m\times n}$ and $r\in [m-1]$.
Then $\langle Z_{U,r}^{S}\rangle=H_U^{S}$ if and only if $\mathcal{L}_r^S$ is irreducible.
\end{conjecture}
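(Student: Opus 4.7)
The plan is to prove the two directions of the biconditional separately. For the forward direction (equality implies irreducibility), I would argue contrapositively: if $\mathcal{L}_r^S = \bigcup_{i=1}^p \mathcal{V}_i$ is a decomposition into irreducible components with $p\ge 2$, then by Lemma~\ref{lemma: ED degree union} the critical point set $Z_{U,r}^S$ partitions into pieces lying on each $\mathcal{V}_i$. Each $\mathcal{V}_i$ is contained in a proper linear subspace $L_i \subseteq \mathcal{L}^S$ obtained by imposing additional zero constraints; in the rank-one case, this is exactly the block structure made explicit by the minimal covers in Proposition~\ref{prop: ED degree rank 1 zero pattern}, and for higher rank one would read off $L_i$ from the unique maximal rectangular block that supports $\mathcal{V}_i$. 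The critical points on $\mathcal{V}_i$ then lie in $L_i$, so $\langle Z_{U,r}^S\rangle \subseteq \sum_i L_i$. A dimension count comparing $\sum_i L_i$ with $H_U^S$ (which encodes only the constraints from $S$ together with the symmetries captured by $\sim_R^S$ and $\sim_C^S$) should yield $\sum_i L_i \subsetneq H_U^S$, because each $L_i$ forces zeros on rows or columns that are nonzero in other components.

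For the reverse direction (irreducibility implies equality), the inclusion $\langle Z_{U,r}^S\rangle \subseteq H_U^S$ is Theorem~\ref{thm: linear_space_for_rank_r_critical_points}, so it suffices to show $\dim \langle Z_{U,r}^S\rangle \ge \dim H_U^S$. I would first show that irreducibility of $\mathcal{L}_r^S$ forces a clean block structure on $S$ in terms of the equivalence classes under $\sim_R^S$ and $\sim_C^S$, reducing the problem to a situation where $S$ consists of whole blocks that combine into a single connected rank-$r$ constraint. One then computes $\dim H_U^S$ by carefully tracking linear independencies among the generators listed in Definition~\ref{def: critical space}, block by block, in the spirit of Proposition~\ref{prop: linear_space_for_rank_one_critical_points} adapted to the block geometry. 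To produce enough critical points on the other side, I would work with the incidence variety $\mathcal{Z} \coloneqq \{(U,X) \in \mathcal{L}^S \times \mathcal{L}_r^S : X \in Z_{U,r}^S\}$: if $\mathcal{L}_r^S$ is irreducible then $\mathcal{Z}$ is expected to be irreducible, and for generic $U \in \mathcal{L}^S$ the linear span of the fiber over $U$ stabilizes; this common span can then be bounded below by a tangent-space argument at a single well-chosen smooth critical point, where enough generators of $H_U^S$ can be matched explicitly.

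The main obstacle is the reverse direction. In the unstructured setting, Proposition~\ref{prop: linear_space_for_rank_one_critical_points} rests on the Eckart-Young-Mirsky theorem (Theorem~\ref{thm: Eckart-Young}), which gives an explicit SVD parametrization of all rank-one critical points and shows they already form a basis of $H_U$. In the structured setting there is no analogous parametrization for $r \ge 2$, so one cannot simply write down enough critical points spanning $H_U^S$. The incidence-variety approach I propose tries to sidestep this by turning the global span question into a local tangent computation, but proving irreducibility of $\mathcal{Z}$ and controlling the relevant tangent space both require delicate handling of how the Lagrange multiplier system interacts with the symmetries $\sim_R^S$ and $\sim_C^S$; this is precisely the point at which the assumption of irreducibility of $\mathcal{L}_r^S$ should enter in an essential way, and making this interaction precise is the step I expect to be the most technical.
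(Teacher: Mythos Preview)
This statement is a \emph{conjecture} in the paper; the authors do not prove it. They support it with Example~\ref{example: 3x3 one zero rank two} (a symbolic verification in one irreducible case) and with Examples~\ref{example: 3x3 one zero basis} and~\ref{example: 3x3 two zeros row basis} (showing strict containment in reducible cases). So there is no proof in the paper to compare against, and your proposal should be read as an attempted resolution of an open problem.

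Your forward direction contains a genuine error. You claim that each irreducible component $\mathcal{V}_i$ of $\mathcal{L}_r^S$ is contained in a proper coordinate subspace $L_i\subsetneq\mathcal{L}^S$ obtained by imposing additional zeros, and that for $r\ge 2$ one can ``read off $L_i$ from the unique maximal rectangular block that supports $\mathcal{V}_i$''. This is false already in Example~\ref{example: 3x3 two zeros row basis}: with $m=n=3$, $r=2$, $S=\{(1,1),(1,2)\}$, the component $\mathcal{V}_2=\mathcal{V}(x_{11},x_{12},\,x_{21}x_{32}-x_{31}x_{22})$ is a hypersurface in $\mathcal{L}^S$ and is not contained in any smaller coordinate subspace (a generic point of $\mathcal{V}_2$ has all seven remaining entries nonzero). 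Consequently your dimension count $\sum_i L_i\subsetneq H_U^S$ collapses, since $L_2=\mathcal{L}^S$. What actually makes the span drop in that example is not extra zero constraints on $\mathcal{V}_2$, but extra \emph{affine} constraints on its critical points (e.g.\ $x_{23}=u_{23}$, $x_{33}=u_{33}$; see \eqref{eq: affine relation 3x3 two zeros row}). Any correct argument for this direction would have to exploit such affine relations on the critical loci of the individual components, not coordinate containments of the components themselves.

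For the reverse direction you correctly identify the difficulty: there is no structured analogue of the SVD parametrization underlying Proposition~\ref{prop: linear_space_for_rank_one_critical_points}, and your incidence-variety sketch is honest about where the work lies. But note that irreducibility of $\mathcal{L}_r^S$ does not obviously translate into irreducibility of the incidence correspondence $\mathcal{Z}$, and the tangent-space bound you propose would at best give $\dim T_X\mathcal{L}_r^S$ at a smooth point, which need not match $\dim H_U^S$. As it stands, neither direction is close to a proof, and the conjecture remains open.
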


\begin{example}[$m=n=3$, $r=2$, $S=\{(1,1)\}$]\label{example: 3x3 one zero rank two}
By experimental computation, we observe that $\mathrm{EDdegree}(\mathcal{L}_{2}^S)=8$ (see Table \ref{table: EDdegree 1 zero}).
If linearly independent, the eight critical points should span an eight-dimensional linear space $\langle Z_{U,2}^{S}\rangle\subset\C^{3\times 3}$.
We verified symbolically using Gr\"{o}bner bases and elimination that $\mathcal{I}(\langle Z_{U,2}^{S}\rangle)=\langle r_{U}^{(2,3)},c_{U}^{(2,3)},x_{11}\rangle$, thus confirming Conjecture \ref{conj: linear span}.
\end{example}

We will show in Examples~\ref{example: 3x3 one zero basis} and~\ref{example: 3x3 two zeros row basis} that if $\mathcal{L}_r^S$ is not irreducible, then $\langle Z_{U,r}^{S}\rangle$ can be strictly contained in $H_U^{S}$. Example~\ref{example: 3x3 one zero basis} is for structured rank-one approximation.
We know from Proposition \ref{prop: ED degree rank 1 zero pattern} that $\mathcal{L}_1^S$ is never irreducible if $S\neq\emptyset$, and hence Conjecture \ref{conj: linear span} suggests that in the structured setting $\langle Z_{U,1}^{S}\rangle$ is always strictly contained in $H_U^{S}$. Example~\ref{example: 3x3 two zeros row basis} is for rank-two approximation.

\begin{example}[$m=n=3$, $r=1$, $S=\{(1,1)\}$]\label{example: 3x3 one zero basis}
In this case, the variety $\mathcal{L}_1^S$ has two irreducible components corresponding to the minimal coverings $(\{1\},\emptyset)$ and $(\emptyset,\{1\})$.
Moreover $\mathrm{EDdegree}(\mathcal{L}_1^S)=4$ by Corollary \ref{corol: ED degree rank 1 diagonal}.
The four critical points on $\mathcal{L}_1^S$ are obtained in this way:
\begin{itemize}
    \item[$(i)$] by computing the SVD of the $3\times 3$ matrix having zero first row and coinciding with $U$ elsewhere (two critical points $C_1,C_2$),
    \item[$(ii)$] by computing the SVD of the $3\times 3$ matrix having zero first column and coinciding with $U$ elsewhere (two critical points $C_3,C_4$).
\end{itemize}
One verifies immediately that the critical space $H_U^{S}$ is six-dimensional.
Therefore $\langle Z_{U,1}^{S}\rangle$ is strictly contained in $H_U^{S}$ and motivates our hypothesis in Conjecture \ref{conj: linear span}.
\end{example}

A higher rank example is showed below.

\begin{example}[$m=n=3$, $r=2$, $S=\{(1,1),(1,2)\}$]\label{example: 3x3 two zeros row basis}
The determinant of a $3\times 3$ matrix $X=(x_{ij})$ with zero pattern $S$ is $\det(X)=x_{13}(x_{21}x_{32}-x_{31}x_{22})$.
Then the variety $\mathcal{L}_{2}^S$ has two components $\mathcal{V}_1=\mathcal{V}(x_{11},x_{12},x_{13})$, $\mathcal{V}_2=\mathcal{V}(x_{11},x_{12},x_{21}x_{32}-x_{31}x_{22})$ and by Lemma \ref{lemma: ED degree union}
\[
\mathrm{EDdegree}(\mathcal{L}_{2}^S)=\mathrm{EDdegree}(\mathcal{V}_1)+\mathrm{EDdegree}(\mathcal{V}_2)=1+2=3\,.
\]
The critical point on $\mathcal{V}_1$ is the projection of $U$ onto $\mathcal{V}_1$.
The two critical points on $\mathcal{V}_2$ come by projecting the third column of $U$ and computing the SVD of the non-zero $2\times 2$ block.
In particular, the linear span $\langle Z_{U,2}^{S}\rangle$ is three-dimensional, whereas the critical space $H_U^{S}$ has dimension five.
\end{example}

\begin{corollary}\label{corol: codim linear span}
Let $S\subset[m]\times[n]$. If $U$ is sufficiently generic, then the codimension of $H_U^{S}\subset\C^{m\times n}$ is
\[
\mathrm{codim}(H_U^{S})=\sum_{C\in [m]/\sim_R^S }\binom{|C|}{2}+\sum_{D\in [n]/\sim_C^S }\gamma_D+|S|,
\]
where
\[
\gamma_D=
\begin{cases}
\binom{|D|}{2} & \mbox{if $|D|\le m$}\\
m(|D|-1)-\binom{m}{2} & \mbox{if $|D|\ge m$}\,.
\end{cases}
\]
\end{corollary}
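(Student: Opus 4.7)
The plan is to compute $\mathrm{codim}(H_U^{S})$ as the dimension of the linear span, inside the dual space $(\C^{m\times n})^{*}$, of the three families of generators appearing in \eqref{eq: linear relations constrained case}: the $|S|$ coordinate forms $x_{ab}$ with $(a,b)\in S$, the row forms $r_U^{(i,j)}$ with $i\sim_R^S j$, and the column forms $c_U^{(i,j)}$ with $i\sim_C^S j$. Writing $I_S$, $I_R$, $I_C$ for their respective spans, I would aim to show that for generic $U$ these three subspaces are in direct sum, and then count each summand individually.

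The trivial piece is $\dim I_S=|S|$, since the coordinate forms are manifestly linearly independent. For $I_R$, I would exploit the fact that the row forms $r_U^{(i,j)}$ attached to a single row class $C$ involve only entries of the rows indexed by $C$; relations from distinct row classes are therefore automatically independent, as their supports sit in disjoint row slabs. Within one class, the $\binom{|C|}{2}$ forms are precisely the row-type linear forms \eqref{eq: linear relations unconstrained case} associated to the $|C|\times n$ submatrix $U_{C,[n]}$. Since $|C|\le m\le n$, Proposition~\ref{prop: linear_space_for_rank_one_critical_points}(1) applied to $U_{C,[n]}$ certifies their linear independence for generic $U$, yielding $\dim I_R=\sum_{C}\binom{|C|}{2}$. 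The column case proceeds symmetrically, but now the relevant submatrix $U_{[m],D}$ has size $m\times|D|$ and $|D|$ may exceed $m$; applying Proposition~\ref{prop: linear_space_for_rank_one_critical_points} directly when $|D|\le m$ and to the transposed submatrix when $|D|\ge m$ reproduces exactly the two branches in the definition of $\gamma_D$. Summing over column classes, which again involve disjoint column supports, gives $\dim I_C=\sum_D\gamma_D$.

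The final step, and the main obstacle, is checking that the three subspaces $I_S$, $I_R$, $I_C$ meet pairwise trivially for generic $U$. The intersection $I_R\cap I_C$ is ruled out class by class via part~(3) of Proposition~\ref{prop: linear_space_for_rank_one_critical_points} and Remark~\ref{rmk: on linear relations with rows or columns}; to see that neither $I_R$ nor $I_C$ meets $I_S$, I would argue that for generic $U$ any nontrivial combination of row (respectively column) forms has a nonzero coefficient on some coordinate $x_{ab}$ with $(a,b)\notin S$, which prevents it from belonging to $I_S$. This genericity verification, carried out on the full coefficient matrix of the three families, is the technical heart of the argument; granting it, the three dimensions add to give
\[
\mathrm{codim}(H_U^{S}) \;=\; \sum_{C\in [m]/\sim_R^S}\binom{|C|}{2}+\sum_{D\in [n]/\sim_C^S}\gamma_D+|S|.
\]
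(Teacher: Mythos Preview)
Your proposal takes essentially the same approach as the paper: both count the independent generators in each of the three families via Proposition~\ref{prop: linear_space_for_rank_one_critical_points} and Remark~\ref{rmk: on linear relations with rows or columns} applied to the appropriate submatrices $U_{C,[n]}$ and $U_{[m],D}$, exploit the disjoint row/column supports to sum over equivalence classes, and then argue that the three spans $I_S$, $I_R$, $I_C$ are in direct sum for generic $U$. Your treatment of the final direct-sum step is in fact more careful than the paper's, which disposes of the independence from $I_S$ by asserting that the row and column relations ``contain no variables with indices in $S$'' (not literally true, since $r_U^{(i,j)}$ does involve $x_{ik}$ with $(i,k)\in S$ whenever such $k$ exist); you correctly flag this as the main technical point requiring a genericity verification.
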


\begin{proof}[Proof of Corollary~\ref{corol: codim linear span}]
We have to show how many of all the linear polynomials appearing in \eqref{eq: linear relations constrained case} are linearly independent.

The first two sets of generators in \eqref{eq: linear relations constrained case} are precisely of the type explained in Remark \ref{rmk: on linear relations with rows or columns}.
On one hand, since $m\le n$, then $|C|\le n$ for every equivalence class $C\in [m]/\!\!\sim_R^S $.
This means that all relations $r_{U}^{(i,j)}$, where $i,j\in C$, are linearly independent.
By Remark \ref{rmk: on linear relations with rows or columns} and since equivalence classes on rows are disjoint, this gives in total the first $\sum_{C\in [m]/\sim_R^S }\binom{|C|}{2}$ independent conditions.
On the other hand, if $D\in[n]/\!\!\sim_C^S $ and $|D|\le m$, then again by Remark \ref{rmk: on linear relations with rows or columns} all relations $c_{U}^{(i,j)}$, where $i,j\in D$, are linearly independent, thus giving $\binom{|D|}{2}$ independent conditions.
Otherwise if $|D|\ge m$, then $m(|D|-1)-\binom{m}{2}$ among the last equations are linearly independent.
Since equivalence classes on columns are disjoint, this gives in total the second $\sum_{D\in [n]/\sim_C^S }\gamma_D$ independent conditions.
Moreover, again by Remark \ref{rmk: on linear relations with rows or columns}, each condition on rows is not a linear combination of equations involving columns, and vice versa.

Finally, consider the last $|S|$ conditions coming from the zero pattern.
Trivially each relation $x_{ij}$ is independent from the other variables $x_{rs}$ with $(r,s)\in S$.
Moreover, all the conditions $\{x_{ij}\mid(i,j)\in S\}$ are independent from the first two sets of equations because the first two contain no variables with indices in $S$.
\end{proof}

If Conjecture \ref{conj: linear span} is true, then the statement in Corollary~\ref{corol: codim linear span} holds for $\langle Z_{U,r}^{S}\rangle$ when $\mathcal{L}_r^S$ is irreducible.

\begin{remark}\label{rmk: coefficients}
Consider again the situation of Example \ref{example: 3x3 one zero rank two} with $m=n=3$, $r=2$, $S=\{(1,1)\}$.
We recall that in this case $\mathcal{I}(\langle Z_{U,2}^{S}\rangle)=\langle r_{U}^{(2,3)},c_{U}^{(2,3)},x_{11}\rangle$, thus confirming Conjecture \ref{conj: linear span}.
In particular $\dim(\langle Z_{U,2}^{S}\rangle)=\dim(H_U^{S})=6$.
One might try to extend the basis $\{C_1,C_2,C_3,C_4\}$ of $\langle Z_{U,1}^{S}\rangle$ given in Example \ref{example: 3x3 one zero basis} to form a basis of $\langle Z_{U,2}^{S}\rangle$, in the most ``natural'' way.
In this example, we consider the additional two rank-one matrices $C_5,C_6$ obtained by computing the SVD of the $3\times 3$ matrix having zero first row and column and coinciding with $U$ elsewhere.
One might check that the six rank-one matrices $C_1,\ldots,C_6$ are linearly independent and form a basis of $H_U^{S}$.

The matrices $C_5$ and $C_6$ are ``good'' in the sense that they are computed directly from the data matrix $U$ via projections and SVDs.
Any critical point $X\in\mathcal{L}_{2}^S$ may be written as $X=\alpha_1C_1+\cdots+\alpha_6C_6$ for some complex coefficients $\alpha_1,\ldots,\alpha_6$.
In Table \ref{table: coeffs 3x3 one zero} we display the coefficients $\alpha_i$ of the eight critical points on $\mathcal{L}_{2}^S$ with respect to the data matrix
\[
U =
\begin{pmatrix}
78.57 & 93.47 & 51.33\\
-58.54 & -7.64 & 34.34\\
53.53 & -89.96 & -87.14
\end{pmatrix}\,.
\]
\begingroup
\setlength{\tabcolsep}{4pt}
\renewcommand{\arraystretch}{1}

\begin{table}[htbp]
\centering
\begin{tabular}{|c||c|c|c|c|c|c|}
\hline
&$\alpha_1$&$\alpha_2$&$\alpha_3$&$\alpha_4$&$\alpha_5$&$\alpha_6$\\
\hhline{|=||======|}
$X_1$&0.96&-0.11&0.01&-0.27&0.04&1.06\\
$X_2$&0.02&1.05&0.00&1.13&-0.03&-1.06\\
$X_3$&0.04&-0.12&0.99&-0.29&-0.01&0.77\\
$X_4$&1.00&0.99&1.00&0.97&-1.00&-0.93\\
$X_5$&0.77-0.87\,$i$&-1.39-0.41\,$i$&0.73-0.30\,$i$&-2.67-0.72\,$i$&-0.28+1.31\,$i$&3.75+3.41\,$i$\\
$X_6$&0.77+0.87\,$i$&-1.39+0.41\,$i$&0.73+0.30\,$i$&-2.67+0.72\,$i$&-0.28-1.31\,$i$&3.75-3.41\,$i$\\
$X_7$&2.45-0.71\,$i$&-0.86-3.45\,$i$&1.65-0.37\,$i$&2.81+5.93\,$i$&-3.59+1.46\,$i$&3.73-4.66\,$i$\\
$X_8$&2.45+0.71\,$i$&-0.86+3.45\,$i$&1.65+0.37\,$i$&2.81-5.93\,$i$&-3.59-1.46\,$i$&3.73+4.66\,$i$\\
\hline
\end{tabular}
\vspace*{1mm}
\caption{The critical points $X_i\in\mathcal{L}_{2}^S\subset\C^{3\times 3}$ of $d_U$ in the basis $\{C_1,\ldots,C_6\}$.}\label{table: coeffs 3x3 one zero}
\end{table}
\endgroup

More generally, knowing the ideal of critical points $X\in\mathcal{L}_{r}^S$ for $d_U$ and a basis $\{C_1,\ldots,C_k\}$ of $\langle Z_{U,r}^{S}\rangle$ whose elements depend only on $U$, allows to compute the ideal $\mathcal{J}_\alpha\subset\C[\alpha_1,\ldots,\alpha_k]$ of relations among the coefficients $\alpha_i$ of a representation of $X$.

This idea needs further investigation and is motivated by the unstructured case.
Indeed, given $U\in\R^{m\times n}$, the critical points $C_1,\dots,C_m$ on $\mathcal{X}_1$ of $d_U$ computed from the SVD of $U$ form a basis of $\langle Z_{U,r}^{S}\rangle=H_U^{S}$ for any $r\in[m]$.
By Theorem \ref{thm: Eckart-Young}, the critical points on $\mathcal{X}_r$ are written uniquely as $X=\alpha_1C_1+\cdots+\alpha_mC_m$ for some coefficients $\alpha_j\in\{0,1\}$.
In this case, the ideal $\mathcal{J}_\alpha$ is zero-dimensional in $\C[\alpha_1,\ldots,\alpha_m]$ and its degree is equal to $\mathrm{EDdegree}(\mathcal{X}_r)=\binom{m}{r}$.
\end{remark}

\subsection{Linear relations among critical points for generic subspaces} \label{sec: general subspaces}

The statement of Theorem \ref{thm: linear_space_for_rank_r_critical_points} can be adapted to arbitrary linear subspaces $\mathcal{L}$ of $\C^{m\times n}$.

\begin{proposition}
Let $U\in\R^{m\times n}$.
Let $\mathcal{L}\subset\C^{m\times n}$ be the linear subspace defined by the linear forms $L_1,\ldots,L_s$ and let $Z_{U,r}^\mathcal{L}$ be the set of critical points of $d_U$ on the variety $\mathcal{L}_r$.
For all $i<j$, $i,j\in[m]$ and $k\in[s]$, if $\nabla L_k\cdot X^{i \leftrightarrow j}\in\langle L_1,\ldots,L_s\rangle$, then $r_U^{(i,j)}\in\mathcal{I}(Z_{U,r}^\mathcal{L})$.
Similarly, for all $i<j$, $i,j\in[n]$ and $k\in[s]$, if $\nabla L_k\cdot X_{i \leftrightarrow j}\in\langle L_1,\ldots,L_s\rangle$, then $c_U^{(i,j)}\in\mathcal{I}(Z_{U,r}^\mathcal{L})$.
\end{proposition}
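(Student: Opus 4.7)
The plan is to adapt the Lagrange multiplier argument from the proof of Theorem~\ref{thm: linear_space_for_rank_r_critical_points} essentially verbatim, replacing the coordinate-zero constraints by the arbitrary linear forms $L_1,\ldots,L_s$ and tracking where the new hypothesis $\nabla L_k \cdot X^{i \leftrightarrow j} \in \langle L_1,\ldots,L_s\rangle$ enters. Concretely, for any $X \in Z_{U,r}^{\mathcal{L}}$ there exist Lagrange multipliers $\lambda=(\lambda_{I,J})$ and $\mu=(\mu_1,\ldots,\mu_s)$ such that
\[
\mleft[\,\lambda\mid\mu\mid 1\,\mright]
\mleft[
\begin{array}{c}
\mathrm{Jac}_{\mathcal{X}_r}(X)\\
\hline
\mathrm{Jac}_{\mathcal{L}}(X)\\
\hline
X-U
\end{array}
\mright]=0,
\]
exactly as in \eqref{eq: system Lagrange}, where now $\mathrm{Jac}_{\mathcal{L}}(X)$ has rows $\nabla L_k$ for $k\in[s]$. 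I would then take the dot product of this row identity with the vector $X^{i \leftrightarrow j}$ and analyze the three resulting blocks separately.

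The first two blocks of the computation are already carried out in the proof of Theorem~\ref{thm: linear_space_for_rank_r_critical_points}: the dot products $\nabla M_{I,J}(X) \cdot X^{i \leftrightarrow j}$ are either zero or another $(r+1)$-minor of $X$ by the Laplace expansion argument, and hence vanish at every point of $\mathcal{X}_r$; and the last row contributes
\[
(X-U)\cdot X^{i \leftrightarrow j}=(UX^T-XU^T)_{ij}=-r_U^{(i,j)}(X).
\]
Thus, after rearranging, one obtains the polynomial identity
\[
r_U^{(i,j)}(X)=\sum_{I,J}\lambda_{I,J}\,\bigl(\nabla M_{I,J}(X)\cdot X^{i \leftrightarrow j}\bigr)+\sum_{k=1}^{s}\mu_k\,\bigl(\nabla L_k\cdot X^{i \leftrightarrow j}\bigr),
\]
valid at every critical point $X\in Z_{U,r}^{\mathcal{L}}$.

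The only place where the argument diverges from Theorem~\ref{thm: linear_space_for_rank_r_critical_points} is the middle block: in the zero-pattern case the hypothesis $i\sim_R^S j$ forced each $\nabla L_k\cdot X^{i \leftrightarrow j}$ to vanish identically, whereas here we only assume it lies in $\langle L_1,\ldots,L_s\rangle$. This weaker hypothesis still suffices: writing $\nabla L_k\cdot X^{i \leftrightarrow j}=\sum_{\ell}g_{k,\ell}(X)\,L_\ell(X)$ for some polynomials $g_{k,\ell}\in\C[X]$, every $L_\ell$ vanishes at $X\in\mathcal{L}$, so each summand in the second block is zero at $X$. Combined with the vanishing of the minor block on $\mathcal{X}_r$, we conclude $r_U^{(i,j)}(X)=0$ for all $X\in Z_{U,r}^{\mathcal{L}}$, i.e.\ $r_U^{(i,j)}\in\mathcal{I}(Z_{U,r}^{\mathcal{L}})$. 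The column statement follows by replacing $X^{i \leftrightarrow j}$ with $X_{i \leftrightarrow j}$ throughout, so a single writeup suffices.

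I do not anticipate any real obstacle: all substantive computations are already present in the proof of Theorem~\ref{thm: linear_space_for_rank_r_critical_points}, and the hypothesis on $\nabla L_k \cdot X^{i \leftrightarrow j}$ is precisely the minimal strengthening needed so that the Lagrange terms coming from the linear constraints vanish on the critical locus rather than pointwise on the ambient space. The one bookkeeping subtlety to be mindful of is that $\lambda$ and $\mu$ are not elements of $\C[X]$ but functions of the critical point $X$, so the identity should be read pointwise on $Z_{U,r}^{\mathcal{L}}$, which is what the statement $r_U^{(i,j)}\in\mathcal{I}(Z_{U,r}^{\mathcal{L}})$ requires.
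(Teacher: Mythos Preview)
Your proposal is correct and follows exactly the route the paper intends: the proposition is stated as an immediate adaptation of Theorem~\ref{thm: linear_space_for_rank_r_critical_points}, and your argument carries out precisely that adaptation, replacing the vanishing of $\nabla L_k\cdot X^{i\leftrightarrow j}$ under $i\sim_R^S j$ by membership in $\langle L_1,\ldots,L_s\rangle$ so that the constraint block vanishes on $\mathcal{L}$ rather than identically.
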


Symbolic computations in Macaulay2 motivate the following question for generic subspaces.

\begin{question}\label{question:linear_span_general_linear_subspace}
Let us replace $\mathcal{L}^S$ with a non-zero proper subspace $\mathcal{L}\subset\C^{m\times n}$. Is it true that $\mathcal{L}=\langle Z_{U,r}^\mathcal{L}\rangle$ for every $r\in [m-1]$ if $\mathcal{L}$ is generic?
\end{question}

We have checked symbolically with a Macaulay2 code that $\mathcal{L}=\langle Z_{U,r}^\mathcal{L}\rangle$ when $\mathcal{L}$ is a generic subspace of codimension $\mathrm{codim}(\mathcal{L})\in\{1,2\}$ and for small formats.
This suggests that the answer to Question~\ref{question:linear_span_general_linear_subspace} is positive.
As showed in Theorem~\ref{thm: linear_space_for_rank_r_critical_points}, the same is not true for $\mathcal{L}=\mathcal{L}^S$ for some zero pattern $S$.

\subsection{The affine span of critical points} \label{sec: affine relations}

In the previous sections, we regarded structured critical points on $\mathcal{L}_{r}^S$ as vectors in $\C^{m\times n}$ and studied all linear relations among them.
In this section, we look for their affine span.
This is important to understand how the critical points of $d_U$ are distributed inside the critical space $H_U^{S}$.
The affine span in the unstructured case is characterized in Proposition~\ref{prop:affine_relations} and Corollary~\ref{corol: affine linear relations unconstrained}. Conjecture~\ref{conj: affine relations} suggests affine relations satisfied by the rank-$r$ critical points in the structured setting.

\begin{proposition} \label{prop:affine_relations}
Assume $m=n$ and $S=\emptyset$.
Given $U\in\R^{m\times m}$ of rank $k$, for every $r\in[k]$ the $\binom{k}{r}$ critical points of $d_U$ on $\mathcal{X}_r$ span an affine hyperplane $W_{U,r}^S\subset H_U^{S}$ of equation
\[
W_{U,r}^S\colon\langle X,C(U)\rangle_F-r\det(U)=0\,,
\]
where $C(U)$ is the cofactor matrix of $U$ whose $(i,j)$-th entry is $(-1)^{i+j}M_{[m]\setminus\{i\},[m]\setminus\{j\}}(U)$.

\noindent In particular, if $\det(U)\neq 0$, then $\{W_{U,r}^S\}_{r\in[m]}$ is a finite family of parallel hyperplanes contained in $H_U^{S}$.
Otherwise if $\det(U)=0$, then $W_{U,r}^S$ is a linear hyperplane whose equation does not depend on the rank $r$.
\end{proposition}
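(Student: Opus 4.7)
The plan is to derive the equation directly from the Eckart--Young--Mirsky description of critical points (Theorem~\ref{thm: Eckart-Young}) combined with the classical identity $C(U)^T = \mathrm{adj}(U)$. First I would fix an SVD $U = A\Sigma B^T$ with $A,B$ orthogonal and $\Sigma = \mathrm{diag}(\sigma_1,\ldots,\sigma_m)$, so that every rank-$r$ critical point takes the form $X_I = A(\Sigma_{i_1} + \cdots + \Sigma_{i_r})B^T$ for some $I = \{i_1 < \cdots < i_r\} \subseteq [k]$. Next I would combine the multiplicativity $\mathrm{adj}(PQ) = \mathrm{adj}(Q)\mathrm{adj}(P)$ with $\mathrm{adj}(A) = \det(A)\,A^T$ for orthogonal $A$ to factor
\[
C(U)^T = \mathrm{adj}(U) = \det(A)\det(B)\, B\, \mathrm{adj}(\Sigma)\, A^T,
\]
where $\mathrm{adj}(\Sigma)$ is diagonal with $\mathrm{adj}(\Sigma)_{jj} = \prod_{\ell\neq j}\sigma_\ell$.

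The central step is a one-line trace computation. Using cyclicity of trace together with $A^TA = B^TB = I_m$, I expect to obtain
\begin{align*}
\langle X_I, C(U)\rangle_F
&= \det(A)\det(B)\,\mathrm{trace}\!\left((\Sigma_{i_1}+\cdots+\Sigma_{i_r})\,\mathrm{adj}(\Sigma)\right)\\
&= \det(A)\det(B)\sum_{j=1}^{r}\sigma_{i_j}\!\prod_{\ell\neq i_j}\!\sigma_\ell \;=\; r\,\det(A)\det(B)\prod_{\ell=1}^{m}\sigma_\ell \;=\; r\det(U).
\end{align*}
A pleasant feature is that this argument is uniform in the rank $k$ of $U$: when $k<m$ both sides vanish since $\prod_\ell\sigma_\ell = 0 = \det(U)$, so no separate treatment of the rank-deficient case is required (in the extreme case $k\le m-2$ one even has $C(U)=0$). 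Consequently every rank-$r$ critical point lies on the affine hyperplane $W_{U,r}^S = \{X : \langle X, C(U)\rangle_F = r\det(U)\}$, and by Theorem~\ref{thm: linear_space_for_rank_r_critical_points} these points already lie in $H_U^{S}$.

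The dichotomy stated in the second part of the proposition then follows immediately from the form of the equation: when $\det(U)\neq 0$ the hyperplanes $W_{U,r}^S$ share the fixed normal direction $C(U)$ and differ only by the offsets $r\det(U)$, producing the finite parallel family indexed by $r$; when $\det(U)=0$ the equation reduces to the homogeneous $\langle X, C(U)\rangle_F = 0$, manifestly independent of $r$. The step I anticipate to require the most care is the claim that the $\binom{k}{r}$ critical points actually \emph{span} $W_{U,r}^S$ and not merely populate it; my plan there is to expand the $X_I$ in the rank-one basis $\{\sigma_i a_i b_i^T\}_{i\in[k]}$ of the critical space and verify affine independence of a carefully chosen subset, beginning with the easy $r=1$ case, where the $k$ points $\sigma_i a_i b_i^T$ are already linearly independent inside the $(k)$-dimensional space cut out by Proposition~\ref{prop: linear_space_for_rank_one_critical_points}.
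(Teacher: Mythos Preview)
Your approach is essentially the same as the paper's: both fix an SVD $U=A\Sigma B^T$, express the cofactor matrix through the SVD, and reduce $\langle X_I,C(U)\rangle_F$ to a diagonal computation giving $r\det(U)$. The paper expands in the rank-one basis $a_ib_i^T$ and uses $\langle a_ib_i^T,a_jb_j^T\rangle_F=\delta_{ij}$, whereas you use trace cyclicity and $A^TA=B^TB=I$; these are the same calculation in different notation. Your version is slightly more careful in tracking the factor $\det(A)\det(B)$ (the paper writes $C(U)=AC(\Sigma)B^T$ and then $r\det(\Sigma)=r\det(U)$, tacitly assuming $\det(A)\det(B)=1$, which one can always arrange but is not stated).

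You are right to flag the ``span'' claim as the delicate point: the paper's proof, like yours, only shows that every critical point \emph{lies on} $W_{U,r}^S$ and does not verify that the affine span of the $\binom{k}{r}$ critical points is all of this hyperplane inside $H_U^S$. So the gap you anticipate is genuine, but it is a gap shared by the paper's own proof; your plan to use the rank-one basis $\{\sigma_i a_ib_i^T\}$ is the natural route to close it in the full-rank case.
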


\begin{proof}
Let $U\in\R^{m\times m}$ of rank $k$, written in SVD form as $U=A\Sigma B^T$.
One verifies immediately that $C(U)=AC(\Sigma)B^T$ and that the $j$-th diagonal entry of $C(\Sigma)$ is equal to $\prod_{l\neq j}\sigma_l$ for all $j\in[m]$.
Given $r\in[k]$, by Theorem \ref{thm: Eckart-Young} a critical point $X\in\mathcal{X}_r$ of $d_U$ is of the form $X=A\Sigma_I B^T$ for some $I=\{i_1<\cdots<i_r\}\subset[m]$, where $\Sigma_I=\mathrm{diag}(0,\ldots,\sigma_{i_1},\ldots,\sigma_{i_r},\ldots,0)$.
Without loss of generality, assume $I=[r]\subset[m]$. We denote by $a_1,\ldots,a_m$ and $b_1,\ldots,b_m$ the orthonormal columns of $A$ and $B$, respectively. Then
\begin{align*}
\langle X, C(U)\rangle_F & = \left\langle A\Sigma_{[r]}B^T, AC(\Sigma)B^T\right\rangle_F\\
& = \left\langle\sum_{i=1}^r\sigma_ia_ib_i^T,\sum_{j=1}^m\left(\prod_{l\neq j}\sigma_l\right)a_jb_j^T\right\rangle_F\\
& = \sum_{i=1}^r\sum_{j=1}^m\sigma_i\left(\prod_{l\neq j}\sigma_l\right)\left\langle a_ib_i^T,a_jb_j^T\right\rangle_F\\
& = \sum_{i=1}^r\sigma_i\left(\prod_{l\neq i}\sigma_l\right) = r\det(\Sigma) = r\det(U)\,,
\end{align*}
where we have applied the identity ($\delta_{ij}$ is the Kronecker delta)
\[
\left\langle a_ib_i^T,a_jb_j^T\right\rangle_F=\mathrm{trace}\left(a_ib_i^Tb_ja_j^T\right)=\delta_{ij}\mathrm{trace}\left(a_ia_j^T\right)=\delta_{ij}\,a_i^Ta_j=\delta_{ij}\,.\qedhere
\]
\end{proof}

The following corollary generalizes Proposition~\ref{prop:affine_relations} to non-squared case.

\begin{corollary}\label{corol: affine linear relations unconstrained}
Assume $m\le n$ and $S=\emptyset$.
Given $U\in\R^{m\times n}$ of rank $k$, for every $r\in[k]$ the $\binom{k}{r}$ critical points of $d_U$ on $\mathcal{X}_r$ span an affine hyperplane $W_{U,r}^S\subset H_U^{S}$ of equation
\[
W_{U,r}^S\colon\left\langle X_{[m],I},C(U_{[m],I})\right\rangle_F-r\det(U_{[m],I})=0\,,
\]
where $I=\{i_1<\cdots<i_m\}\subset[n]$ and, modulo the ideal $\mathcal{I}(H_U^{S})$, the above equation does not depend on the particular choice of $I$.
\end{corollary}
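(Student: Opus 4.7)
The plan is to adapt the SVD argument of Proposition~\ref{prop:affine_relations} to the rectangular setting by restricting attention to the $m\times m$ submatrix $U_{[m],I}$ and transferring the key identity through the adjugate. The crux is the observation that every critical point $X$ of $d_U$ on $\mathcal{X}_r$ has the multiplicative form $X = M^J U$, where $M^J$ is an orthogonal projector of rank $r$ built from the left singular vectors of $U$; this identity restricts column-wise as $X_{[m],I} = M^J\,U_{[m],I}$. Once the equation is verified on every critical point, independence of $I$ modulo $\mathcal{I}(H_U^S)$ will follow from a clearing-denominators argument that exploits $\langle Z_{U,r}\rangle = H_U^S$ in the unstructured setting (Proposition~\ref{prop: linear_space_for_rank_one_critical_points}).

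First, I would fix the SVD $U = A\Sigma B^T$ from Theorem~\ref{thm: SVD}. By Theorem~\ref{thm: Eckart-Young} every critical point of $d_U$ on $\mathcal{X}_r$ is of the form $X = A\Sigma^J B^T$ for some $J\subset[k]$ with $|J|=r$, where $\Sigma^J$ retains the diagonal entries indexed by $J$ and zeros the rest. Writing $\Sigma^J = P^J\Sigma$ for the $m\times m$ diagonal projector $P^J$ with $1$'s at positions in $J$, one has $X = (AP^JA^T)\,U =: M^J U$, with $M^J\in\R^{m\times m}$ a symmetric idempotent of trace $r$. Restricting to the columns indexed by $I$ gives $X_{[m],I} = M^J\,U_{[m],I}$. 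Combining $C(Y)^T = \mathrm{adj}(Y)$ with $Y\,\mathrm{adj}(Y) = \det(Y)\,I_m$ then yields
\begin{align*}
\langle X_{[m],I}, C(U_{[m],I})\rangle_F
  &= \mathrm{trace}\bigl(X_{[m],I}\,\mathrm{adj}(U_{[m],I})\bigr) \\
  &= \mathrm{trace}\bigl(M^J\,U_{[m],I}\,\mathrm{adj}(U_{[m],I})\bigr) \\
  &= \det(U_{[m],I})\,\mathrm{trace}(M^J) \;=\; r\,\det(U_{[m],I}),
\end{align*}
so every critical point lies on the affine hyperplane $W_{U,r}^S$ determined by the stated equation; for sufficiently generic $U$ the constant $r\det(U_{[m],I})$ is nonzero, so $W_{U,r}^S$ is a proper hyperplane of $H_U^S$.

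For the independence modulo $\mathcal{I}(H_U^S)$: since $S=\emptyset$, Proposition~\ref{prop: linear_space_for_rank_one_critical_points} together with the remark $\langle Z_{U,r}\rangle = \langle Z_{U,1}\rangle$ that precedes it yield $\langle Z_{U,r}\rangle = H_U = H_U^S$ for sufficiently generic $U$. Given two subsets $I,I'\subset[n]$ of cardinality $m$, form the homogeneous linear form
\[
\phi_{I,I'}(X) \;:=\; \det(U_{[m],I'})\,\langle X_{[m],I}, C(U_{[m],I})\rangle_F \;-\; \det(U_{[m],I})\,\langle X_{[m],I'}, C(U_{[m],I'})\rangle_F.
\]
Evaluating on any critical point gives $r\det(U_{[m],I})\det(U_{[m],I'}) - r\det(U_{[m],I'})\det(U_{[m],I}) = 0$, so $\phi_{I,I'}$ vanishes on $Z_{U,r}$ and, being linear, on the linear span $\langle Z_{U,r}\rangle = H_U^S$. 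Hence $\phi_{I,I'}\in\mathcal{I}(H_U^S)$, and modulo this ideal the two affine equations differ only by the nonzero scalar $\det(U_{[m],I})/\det(U_{[m],I'})$, defining the same affine hyperplane $W_{U,r}^S$.

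The main technical caveat concerns the rank-deficient case $k<m$: then $\det(U_{[m],I}) = 0$ for every $I$, the displayed equation degenerates to the homogeneous linear relation $\langle X_{[m],I}, C(U_{[m],I})\rangle_F = 0$, and the scaling argument above breaks down. I would address this either by restricting to generic $U$ (for which $k=m$) and passing to limits by algebraic continuity, or by observing directly that in the rank-deficient case the linear form $\langle X_{[m],I}, C(U_{[m],I})\rangle_F$ already lies in $\mathcal{I}(H_U^S)$, making the equation vacuous and trivially independent of $I$.
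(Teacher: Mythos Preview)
Your proof is correct. The paper states this result as a corollary of Proposition~\ref{prop:affine_relations} without supplying an explicit argument, so there is no detailed paper proof to compare against directly; the implicit intent is presumably to adapt the square-case SVD computation.

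Your route is in fact cleaner than a direct adaptation of that computation. The proof of Proposition~\ref{prop:affine_relations} rests on the factorization $C(U)=A\,C(\Sigma)\,B^T$, which uses that both $A$ and $B$ are square orthogonal; for the rectangular submatrix $U_{[m],I}=A\,\Sigma\,(B_{I,[n]})^T$ the columns indexed by $I$ no longer come from an orthogonal matrix, so there is no tidy formula for $C(U_{[m],I})$ in terms of singular data. Your projector identity $X=M^JU$ with $M^J=AP^JA^T$ symmetric idempotent of trace $r$, followed by the adjugate relation $U_{[m],I}\,\mathrm{adj}(U_{[m],I})=\det(U_{[m],I})I_m$, sidesteps this entirely: it needs only that $X$ is obtained from $U$ by a left multiplication, which restricts column-wise. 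This also makes transparent why the computation is insensitive to which $m$ columns one selects. Your independence-of-$I$ argument via the homogeneous linear form $\phi_{I,I'}$ and the equality $\langle Z_{U,r}\rangle=H_U^S$ from Proposition~\ref{prop: linear_space_for_rank_one_critical_points} is likewise sound and makes explicit a point the paper leaves to the reader; the rank-deficient caveat you flag is real but minor, and your proposed resolution (the linear form already lies in $\mathcal{I}(H_U^S)$ once the constant term vanishes) is adequate.
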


In the next example we observe which of the affine relations of Corollary \ref{corol: affine linear relations unconstrained} still hold true in the structured case.

\begin{example}\label{example: 3x4 one zero affine relation}
Let $U\in\R^{3\times 4}$.
In this example we investigate the best rank-two approximation of $U$ with zero pattern $S=\{(1,1)\}$.
We verified symbolically that $\mathrm{EDdegree}(\mathcal{L}_{r}^S)=8$.
The eight critical points of $d_U$ span a seven-dimensional linear space $\langle Z_{U,2}^{S}\rangle=H_U^{S}$ whose ideal is
\[
\mathcal{I}(H_U^{S})=\langle r_U^{(2,3)},c_U^{(2,3)},c_U^{(2,4)},c_U^{(3,4)},x_{11}\rangle\,.
\]
Moreover, the critical points satisfy the affine relation in $H_U^{S}$
\[
\left\langle X_{[3],I},C(U_{[3],I})\right\rangle_F-2\det(U_{[3],I})=0\ \text{for}\ I=\{2,3,4\}\,.
\]
\end{example}

The previous example motivates the following conjecture in structured setting.

\begin{conjecture}\label{conj: affine relations}
Assume that $\mathcal{L}_r^S$ is irreducible. Given a subset $I\subset[n]$ with $|I|=m$, the complex critical points of $d_U$ on $\mathcal{L}_r^S$ satisfy the additional affine relation
\begin{equation}\label{eq: affine relation constrained case}
\left\langle X_{[m],I},C(U_{[m],I})\right\rangle_F-r\det(U_{[m],I})=0
\end{equation}
if any only if $S\cap([m]\times I)=\emptyset$.
\end{conjecture}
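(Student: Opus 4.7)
I would treat the two directions of the conjecture separately: the forward implication via a Lagrange multiplier argument with a well-chosen family of tangent directions in the $[m]\times I$ block, and the converse via a combination of codimension estimate and explicit verification on a representative small case.

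\emph{The ``if'' direction.} Assume $S\cap([m]\times I)=\emptyset$, and let $X\in Z_{U,r}^S$ be a generic (smooth, rank-$r$) critical point of $d_U$. For every $A\in\C^{m\times m}$, define $\dot X(A)\in\C^{m\times n}$ by
\[
\dot X(A)_{[m],I}=X_{[m],I}\,A,\qquad \dot X(A)_{[m],[n]\setminus I}=0.
\]
Each such $\dot X(A)$ is tangent to $\mathcal{L}_r^S$ at $X$: the columns of $X+t\,\dot X(A)$ all lie in $\operatorname{col}(X)$, so the rank constraint is preserved, and the support of $\dot X(A)$ is disjoint from $S$, so the zero pattern is preserved. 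The critical-point condition $\langle U-X,\dot X(A)\rangle_F=0$ rewrites as $\langle A,\,X_{[m],I}^T(U_{[m],I}-X_{[m],I})\rangle_F=0$, and letting $A$ range over $\C^{m\times m}$ yields the matrix identity
\[
X_{[m],I}^T\bigl(U_{[m],I}-X_{[m],I}\bigr)=0.
\]
Next I would use a rank factorization $X_{[m],I}=PQ^T$ with $P,Q\in\C^{m\times r}$ of full column rank to rearrange this identity as $X_{[m],I}=\Pi\,U_{[m],I}$, where $\Pi=P(P^TP)^{-1}P^T$ is an idempotent $m\times m$ matrix of rank $r$. For generic $U$ the block $U_{[m],I}$ is invertible; combining with the cofactor identity $C(U_{[m],I})^T=\det(U_{[m],I})\,U_{[m],I}^{-1}$ gives
\[
\langle X_{[m],I},C(U_{[m],I})\rangle_F=\det(U_{[m],I})\,\mathrm{tr}(\Pi)=r\det(U_{[m],I}),
\]
which is the desired affine relation. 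Since this relation is polynomial in $(X,U)$ and holds on a Zariski-dense subset of the critical locus, it extends to all of $Z_{U,r}^S$.

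\emph{The ``only if'' direction.} If $(i_0,j_0)\in S\cap([m]\times I)$, the above family $\dot X(A)$ ceases to preserve the zero pattern unless the $k_0$-th column of $A$, where $k_0$ is the position of $j_0$ inside $I$, satisfies an additional linear constraint. The Lagrange step then yields only a proper subsystem of the matrix identity $X_{[m],I}^T(U_{[m],I}-X_{[m],I})=0$, which is insufficient to force $X_{[m],I}=\Pi\,U_{[m],I}$. To turn this heuristic obstruction into a proof, I would either \emph{(a)} verify the failure on a small representative case (for instance, the $m=3$, $n=4$, $r=2$ setup of Example~\ref{example: 3x4 one zero affine relation} with $I$ chosen to meet $S$) and propagate it to generic $U$ by a deformation argument using the irreducibility of $\mathcal{L}_r^S$, or \emph{(b)} combine the codimension estimate of Corollary~\ref{corol: codim linear span} together with Theorem~\ref{thm: linear_space_for_rank_r_critical_points} with a Bertini-type genericity to show that no additional affine relation of the conjectured form can vanish on the generic critical point.

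\emph{Main obstacle.} The hard part is the converse: the Lagrange analysis cleanly pinpoints where the forward argument breaks down, but the break-down itself does not automatically produce a critical point witnessing the failure of the affine relation. A systematic proof must exploit the irreducibility hypothesis to control the affine span of $Z_{U,r}^S$ within the critical space $H_U^S$, and this is where I expect the main technical difficulty to lie.
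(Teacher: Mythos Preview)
The statement you are attempting to prove is labelled a \emph{conjecture} in the paper; no proof is offered there. So there is nothing to compare against, and your proposal should be assessed on its own merits.

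Your ``if'' direction is essentially correct and is, in fact, a new argument that the paper does not contain. The family $\dot X(A)$ with $\dot X(A)_{[m],I}=X_{[m],I}A$ is well chosen: the columns of $X+t\dot X(A)$ stay in $\operatorname{col}(X)$, the support avoids $S$ by hypothesis, and the resulting identity $X_{[m],I}^{T}(U_{[m],I}-X_{[m],I})=0$ does force $X_{[m],I}=\Pi\,U_{[m],I}$ with $\mathrm{tr}(\Pi)=r$ on the open locus where $\rank(X_{[m],I})=r$ and $P^{T}P$ is nonsingular. The only point that needs tightening is the closure step: $Z_{U,r}^{S}$ is a finite set for fixed generic $U$, so ``Zariski-dense subset of the critical locus'' is not the right phrasing. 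What you want is irreducibility of the ED correspondence over $\mathcal{L}_r^S$ (which follows from the assumed irreducibility of $\mathcal{L}_r^S$): the bad locus $\{\rank(X_{[m],I})<r\}$ is a proper closed subset of $\mathcal{L}_r^S$ (nonemptiness of the complement uses $S\cap([m]\times I)=\emptyset$), hence its preimage in the ED correspondence projects to a proper closed subset of data matrices, and for $U$ outside that set every critical point satisfies the affine relation. With this adjustment the forward direction stands. Note that specialising your argument to $S=\emptyset$ gives an alternative, SVD-free proof of Proposition~\ref{prop:affine_relations} and Corollary~\ref{corol: affine linear relations unconstrained}.

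Your ``only if'' direction, as you yourself flag, is not a proof. Option~(a) does not scale: verifying failure for one particular $(m,n,r,S,I)$, say the $3\times 4$ case of Example~\ref{example: 3x4 one zero affine relation} with $1\in I$, establishes nothing for other tuples, and irreducibility of $\mathcal{L}_r^S$ gives no mechanism to deform between different $(m,n,r,S,I)$. Option~(b) is aimed at the wrong target: Theorem~\ref{thm: linear_space_for_rank_r_critical_points} and Corollary~\ref{corol: codim linear span} (and even the unproven Conjecture~\ref{conj: linear span}) control the \emph{linear} span $\langle Z_{U,r}^S\rangle$, not the affine span, so they cannot by themselves exclude an extra affine relation with nonzero constant term. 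The converse therefore remains open, exactly as in the paper.
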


Conjecture \ref{conj: affine relations} is not valid if $\mathcal{L}_r^S$ is not irreducible, as showed in the next example.

\begin{example}[$m=n=3$, $r=2$, $S=\{(1,1),(1,2)\}$]\label{example: 3x3 two zeros row basis, affine}
Following up Example \ref{example: 3x3 two zeros row basis}, we observe that the ideal of affine relations among the critical points is
\begin{equation}\label{eq: affine relation 3x3 two zeros row}
\left\langle r_U^{(2,3)},c_U^{(1,2)},x_{11},x_{12},x_{23}-u_{23},x_{33}-u_{33},\left\langle\widetilde{X},C(\widetilde{U})\right\rangle_F-2\det(\widetilde{U})\right\rangle\,,
\end{equation}
where the matrices $\widetilde{U}$ and $\widetilde{X}$ coincide with $U$ and $X$ outside $S$, respectively, and are zero otherwise.
The seven affine relations are independent and thus define an affine plane in $\C^{3\times 3}$.

The first four relations are linear and define the critical space $H_U^{S}$.
The last affine relation in \eqref{eq: affine relation 3x3 two zeros row} is equivalent to $\left\langle X,C(U)\right\rangle_F-r\det(U)=0$ modulo $\mathcal{I}(\mathcal{L}^S)=\langle x_{11},x_{12}\rangle$, since by Lemma \ref{lemma: critical points projection} the matrix $U$ shares the same critical points of its projection $\pi_{\mathcal{L}^S}(U)$ onto $\mathcal{L}^S$.
On one hand, this affine relation coincides with \eqref{eq: affine relation constrained case} for $I=J=[3]$.
On the other hand, in this case $S\cap([3]\times[3])\neq\emptyset$.
\end{example}

\subsection{Special nonlinear relations among structured critical points}\label{sec: special nonlinear relations}

In Section \ref{sec:linear_relations}, we observed that the special linear constraints $\mathcal{L}^S$ coming from zero patterns $S\subset[m]\times[n]$ preserve some of the linear relations among unstructured critical points.
In this subsection, we deal with special nonlinear relations among unstructured or structured critical points. The following is a consequence of \cite[Theorem 5.2]{draisma2016euclidean} and Theorem \ref{thm: Eckart-Young}.

\begin{proposition}\label{lemma: nonlinear relations minors unconstrained}
Let $U\in\R^{m\times n}$.
Every critical point $X\in \mathcal{X}_r$ of $d_U$ is such that $U-X\in \mathcal{X}_r^\vee = \mathcal{X}_{m-r}$ and $U-X$ is a critical point of $d_U$ as well.
In particular, for every subset $A\subset[m]$ and $B\subset[n]$ with $|A|=|B|\ge m-r+1$, we have that
\begin{equation}\label{eq: det(U-X)AB}
M_{A,B}(U-X)=0\,.
\end{equation}
\end{proposition}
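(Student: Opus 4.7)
The plan is to reduce the statement to the Eckart--Young--Mirsky theorem combined with the general ED-duality principle of \cite{draisma2016euclidean}. First I would write a singular value decomposition $U = A \Sigma B^T$ with $\mathrm{rank}(U) = k$ and apply Theorem \ref{thm: Eckart-Young} to express any rank-$r$ critical point of $d_U$ as $X = A \Sigma_I B^T$ for some $I \subset [k]$ with $|I| = r$, where $\Sigma_I$ denotes the diagonal matrix with entries $\sigma_i$ for $i \in I$ and zeros elsewhere.

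The first main step is to compute $U - X = A(\Sigma - \Sigma_I) B^T$ and observe that $\Sigma - \Sigma_I$ is diagonal with non-zero entries only at the positions in $[k] \setminus I$. Since $A$ and $B$ have orthonormal columns, $\mathrm{rank}(U - X) = k - r \leq m - r$, hence $U - X \in \mathcal{X}_{m-r}$. The identification $\mathcal{X}_r^\vee = \mathcal{X}_{m-r}$ is a classical fact about projective duality of determinantal varieties, and Theorem~5.2 of \cite{draisma2016euclidean} is precisely what gives the general principle that the residual at a critical point lies in the ED-dual variety.

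For the claim that $U - X$ is itself a critical point of $d_U$ on $\mathcal{X}_{m-r}$, I would again exploit the symmetry of the SVD: setting $J = [k] \setminus I$, we have $U - X = A \Sigma_J B^T$. When $U$ has full rank ($k = m$), $|J| = m - r$ and this is precisely the Eckart--Young form of a rank-$(m - r)$ critical point of $d_U$. When $k < m$, enlarge $J$ by padding with indices from $[m] \setminus [k]$, whose singular values are zero, so that the same expression realises $U - X$ in the form prescribed by Theorem~\ref{thm: Eckart-Young}. The underlying reason this works is that the orthogonality between $X$ and $U - X$ with respect to the Frobenius inner product, visible from their disjoint diagonal supports in the SVD basis of $U$, is symmetric under swapping $X$ with $U - X$, so the critical-point condition transfers from one to the other.

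Finally, the vanishing of minors is an immediate corollary: since $\mathrm{rank}(U - X) \leq m - r$, every minor of $U - X$ of size strictly greater than $m - r$ vanishes, giving $M_{A,B}(U - X) = 0$ for all $A \subset [m]$, $B \subset [n]$ with $|A| = |B| \geq m - r + 1$. The main technical point to be careful with is Step~2 in the rank-deficient case $k < m$, where $U - X$ is a singular point of $\mathcal{X}_{m-r}$; the cleanest way around this is to observe that the orthogonality of $X$ with respect to the tangent cone at $U - X$ can be verified directly in the SVD basis, or alternatively to appeal to the ED-duality framework of \cite{draisma2016euclidean}, in which this symmetry is built into the definition of critical point on an algebraic variety.
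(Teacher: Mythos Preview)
Your proposal is correct and follows essentially the same route the paper indicates: the paper does not give a detailed proof of this proposition but simply states that it is a consequence of \cite[Theorem 5.2]{draisma2016euclidean} and the Eckart--Young--Mirsky theorem (Theorem~\ref{thm: Eckart-Young}). Your argument is a natural expansion of that one-line citation, and the extra care you take with the rank-deficient case $k<m$ is reasonable though not strictly needed for the generic-$U$ regime that is the paper's main focus.
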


The main result of this subsection is the next proposition which states that some of the relations in \eqref{eq: det(U-X)AB} hold even in the structured case.

\begin{proposition}\label{prop: nonlinear relations minors constrained}
Let $S\subset[m]\times[n]$, $U\in\R^{m\times n}$ and consider a critical point $X=(x_{ij})\in\mathcal{L}_{r}^S$ of $d_U$.
Let $A\times B\subset [m]\times[n]$ with $|A|=|B|\ge m-r+1$ and such that $S\cap(A\times B)=\emptyset$.
Then $M_{A,B}(U-X)=0$.
\end{proposition}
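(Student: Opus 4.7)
The plan is to combine the Lagrange multiplier condition at the critical point $X$ with the classical description of the normal space to the rank variety $\mathcal{X}_r$. The key observation is that the ``structured'' part of the residual $U-X$ concentrates on the index set $S$, so it becomes invisible when we look at a submatrix indexed by $A\times B$ with $S\cap(A\times B)=\emptyset$.

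First, I would assume that $X$ is a smooth point of $\mathcal{L}_r^S=\mathcal{X}_r\cap\mathcal{L}^S$, which is the generic situation and the only case relevant for the definition of critical point used throughout the paper. Since $\mathcal{L}^S$ is a coordinate subspace, its Frobenius-orthogonal complement is $(\mathcal{L}^S)^\perp=\mathrm{span}\{E_{ij}:(i,j)\in S\}$, where $E_{ij}$ denotes the standard basis matrix. Taking orthogonal complements of $T_X\mathcal{L}_r^S=T_X\mathcal{X}_r\cap\mathcal{L}^S$ yields
\[
N_X\mathcal{L}_r^S=N_X\mathcal{X}_r+(\mathcal{L}^S)^\perp.
\]
The criticality condition $U-X\in N_X\mathcal{L}_r^S$ then furnishes a decomposition $U-X=N+E$ with $N\in N_X\mathcal{X}_r$ and $E\in(\mathcal{L}^S)^\perp$. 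Since $E$ is supported on $S$ and $S\cap(A\times B)=\emptyset$ by hypothesis, restricting to rows $A$ and columns $B$ gives $(U-X)_{A,B}=N_{A,B}$.

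Next, I would invoke the classical description of $N_X\mathcal{X}_r$: writing $X=A'(B')^T$ with $A'\in\C^{m\times r}$ and $B'\in\C^{n\times r}$ of full column rank (so $\rank(X)=r$), one has $N\in N_X\mathcal{X}_r$ if and only if $(A')^T N=0$ and $NB'=0$. Either relation forces $\rank(N)\le m-r$ (using $m\le n$), and hence $\rank(N_{A,B})\le\rank(N)\le m-r$. Since $|A|=|B|\ge m-r+1$, the determinant $M_{A,B}(U-X)=M_{A,B}(N)=\det(N_{A,B})$ of an $(m-r+1)$-or-larger submatrix of a rank-$(m-r)$ matrix must vanish, which is the desired conclusion.

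The main subtlety will be handling critical points $X$ that lie on the singular locus of $\mathcal{X}_r$, i.e., with $\rank(X)=r'<r$. In that case one views $X$ as a critical point of $d_U$ on the smaller variety $\mathcal{X}_{r'}\cap\mathcal{L}^S$ and reruns the same argument, obtaining the even stronger bound $\rank((U-X)_{A,B})\le m-r'<m-r+1\le|A|$. Alternatively, because the identity $M_{A,B}(U-X)=0$ is polynomial in $(U,X)$, once it is established on the Zariski-dense locus of smooth critical points it extends to all critical points by a limit argument. Apart from this edge case, the proof is short: the structured correction $E$ disappears on submatrices disjoint from $S$, leaving only the rank-bounded normal part of $\mathcal{X}_r$ to control $(U-X)_{A,B}$.
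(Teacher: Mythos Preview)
Your proof is correct and shares the same opening move as the paper: both decompose $U-X=N+E$ with $N$ coming from the rank-variety part of the Lagrange condition and $E$ supported on $S$, so that $(U-X)_{A,B}=N_{A,B}$ whenever $S\cap(A\times B)=\emptyset$. The difference lies in how you kill $M_{A,B}(N)$. The paper writes $F_{A,B}(X,\lambda)=M_{A,B}\bigl(\sum_{I,J}\lambda_{I,J}\,\partial M_{I,J}(X)\bigr)$, observes that this polynomial is independent of $S$, and then appeals to the unstructured result (Proposition~\ref{lemma: nonlinear relations minors unconstrained}, ultimately Eckart--Young) to conclude that $F_{A,B}$ vanishes. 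You instead invoke the explicit description of $N_X\mathcal{X}_r$ to obtain the rank bound $\rank(N)\le m-r$ directly, which is more elementary and self-contained---it bypasses the SVD entirely. The paper's route has the compensating virtue of staying purely inside the Lagrange system and making visible that $F_{A,B}\equiv 0$ on $\mathcal{X}_r\times\C^{\lambda}$ as a polynomial identity, which they exploit again in Proposition~\ref{prop: conditions L on rank(U-X)}.

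One small caution: your identity $N_X\mathcal{L}_r^S=N_X\mathcal{X}_r+(\mathcal{L}^S)^\perp$ presupposes $T_X\mathcal{L}_r^S=T_X\mathcal{X}_r\cap\mathcal{L}^S$, which is not automatic at every smooth point of the intersection (only the inclusion $\subseteq$ is). It is cleaner---and exactly what the paper's system~\eqref{eq: polynomial system with Lagrange multipliers} encodes---to read the decomposition directly off the Lagrange condition $U-X\in\mathrm{rowspan}\,\mathrm{Jac}_{\mathcal{X}_r}(X)+(\mathcal{L}^S)^\perp$; the first summand already consists of matrices of rank at most $m-r$ whenever $\rank(X)=r$, and is zero when $\rank(X)<r$, so your edge-case discussion becomes unnecessary.
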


\begin{proof}
In the following, we denote by $L_1,\ldots,L_{|S|}$ the constraints that set the entries in $S\subset[m]\times[n]$ of the structured matrices to be equal to zero.
We recall from Theorem~\ref{thm: linear_space_for_rank_r_critical_points} that the critical points $X=(x_{ij})\in \mathcal{L}_{r}^S$ of $d_U$ are the solutions of the system
\begin{equation}\label{eq: polynomial system with Lagrange multipliers}
  \begin{cases}
    M_{I,J}(X) = 0 & \mbox{$\forall\,|I|=|J|=r+1$}\\
    L_k(X) = 0 & \forall\,k\in [|S|]\\
    u_{ij} - x_{ij} = \sum_{I,J}\frac{\partial M_{I,J}}{\partial x_{ij}}\lambda_{I,J} + \sum_{k=1}^t\frac{\partial L_k}{\partial x_{ij}} \mu_k & \forall\,(i,j)\in[m]\times [n]\,.
  \end{cases}
\end{equation}

Let $A\times B\subset[m]\times[n]$ with $|A|=|B|=s$ and assume $S\cap(A\times B)=\emptyset$.
Equivalently we have $\frac{\partial L_k}{\partial x_{ij}}=0$ for all $k\in[t]$ and for all $(i,j)\in A\times B$.
Define the matrix
\[
\partial(f)\coloneqq\left(\frac{\partial f}{\partial x_{ij}}\right)\in\C^{m\times n}\quad\forall\,f=f(X)\in\C[X]\,.
\]
Using the third set of equations in \eqref{eq: polynomial system with Lagrange multipliers}, we get the identity
\begin{equation}\label{eqn:FAB}
M_{A,B}(U-X) = M_{A,B}\left(\sum_{I,J}\partial(M_{I,J}(X))\lambda_{I,J}\right)\eqqcolon F_{A,B}.
\end{equation}

Under the assumption $S\cap(A\times B)=\emptyset$, the polynomial $F_{A,B}$ does not depend on the linear constraints $L_k(X)=0$, and thus it is independent of $S$. Hence the equality~(\ref{eqn:FAB}) for $S=\emptyset$ involves the same $F_{A,B}$ as for any other $S$ satisfying $S\cap(A\times B)=\emptyset$. By Proposition~\ref{lemma: nonlinear relations minors unconstrained}, $M_{A,B}(U-X)=0$ in the unstructured case, and hence  $M_{A,B}(U-X)=0$ in the structured case.
\end{proof}

\begin{remark}\label{lemma: FAB with minors}
The polynomial $F_{A,B}$ is homogeneous in the $\ell_{I,J}$'s.
The coefficients of the monomials of $F_{A,B}$ in the $\ell_{I,J}$'s are polynomials in $\C[X]$. In particular, they belong to the ideal $\mathcal{I}(\mathcal{X}_r)$ for all $A\times B\subset[m]\times[n]$ with $s\ge m-r+1$.
\end{remark}

\begin{remark}
The condition $S\cap(A\times B)=\emptyset$ in Proposition \ref{prop: nonlinear relations minors constrained} is sufficient but not necessary to prove that $M_{A,B}(U-X)=0$.
For example, let $m=n=3$, $r=2$, $s=1$ and $S=\{(1,1)\}$.
If $A=B=[3]$, we obtain that (here $\lambda_{[3],[3]}=\lambda$, $\mu_1=\mu$ and $M_{[3],[3]}(X)=\det(X)$)
\[
F_{[3],[3]}=\det(X)^2\lambda^3+x_{11}\det(X)\lambda^2\mu\,,
\]
that is, $F_{[3],[3]}\in\mathcal{I}(\mathcal{X}_2)$ and consequently $\det(U-X)=0$.
\end{remark}

The statement of Proposition \ref{prop: nonlinear relations minors constrained} can be generalized to arbitrary linear sections $\mathcal{L}_r$ of $\mathcal{X}_r$.
The condition which replaces $S\cap(A\times B)=\emptyset$ is simply that $\frac{\partial L}{\partial x_{ij}}=0$ for all $L\in\mathcal{I}(\mathcal{L})$, namely no linear constraint depends by variables $x_{ij}$ with indices in $A\times B$.
Again this condition is far from being necessary.
To show this, below we restrict to one constraint $L=\sum_{i,j}v_{ij}x_{ij}$ and to the variety of corank one square matrices $\mathcal{X}_{m-1}\subset\C^{m\times m}$.
We prove that the rank of $U-X$ is completely characterized by the rank of the coefficient matrix $V=(v_{ij})$.

\begin{proposition}\label{prop: conditions L on rank(U-X)}
Consider a linear form $L=\sum_{i,j=1}^mv_{ij}x_{ij}$ for some matrix $V=(v_{ij})\in\C^{m\times m}$.
Let $U\in\R^{m\times m}$ and let $X\in\mathcal{L}_{m-1}=\mathcal{X}_{m-1}\cap\mathcal{V}(L)$ be a critical point of $d_U$.
Then for all $2\le k\le m$
\[
\mathrm{rk}(U-X)\le k-1\text{ if and only if }\rank(V)\le k-2\,.
\]
In particular $\rank(U-X)\ge 1$ if $V\neq 0$ and $\rank(U-X)=1$ if and only if $V=0$, namely in the unstructured case.
\end{proposition}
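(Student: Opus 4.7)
The approach is via Lagrange multipliers. At a point $X \in \mathcal{X}_{m-1}$ of rank exactly $m-1$, the variety $\mathcal{X}_{m-1}$ is locally smooth and cut out by $\det(X)=0$, whose gradient (as a matrix) equals the cofactor matrix $C(X)$; the hyperplane $\mathcal{V}(L)$ has gradient $V$. Hence a critical point $X$ of $d_U$ on $\mathcal{L}_{m-1}$ satisfies
\begin{equation*}
U - X \;=\; \lambda\, C(X) + \mu\, V
\end{equation*}
for some multipliers $\lambda,\mu \in \C$. For a generic critical point, $\rank(X)=m-1$, so $C(X)$ is a rank-one matrix and I can factor $C(X) = w y^T$, where $w,y\in \C^m$ span the left and right kernels of $X$, respectively.

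One direction is then immediate from subadditivity of rank:
\begin{equation*}
\rank(U-X) \;\le\; \rank(\lambda w y^T) + \rank(\mu V) \;\le\; 1 + \rank(V),
\end{equation*}
which yields $\rank(V) \le k-2 \Rightarrow \rank(U-X) \le k-1$. In particular, when $V=0$ the bound collapses to $\rank(U-X)\le 1$, consistent with Proposition~\ref{lemma: nonlinear relations minors unconstrained} in the unstructured setting.

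For the reverse direction, the plan is to analyze $\ker(U-X)$ directly. A vector $\eta \in \C^m$ lies in $\ker(U-X)$ precisely when $\mu V\eta = -\lambda(y^T\eta)\,w$. Assuming $\mu \ne 0$ and $w \notin \mathrm{Im}(V)$, the left-hand side lies in $\mathrm{Im}(V)$ and the right-hand side in $\mathrm{span}(w)$, so both must vanish separately: $V\eta=0$ and $y^T\eta=0$. Thus $\ker(U-X) \subseteq \ker(V)\cap \ker(y^T)$, and provided $y \notin \mathrm{Im}(V^T) = (\ker V)^\perp$, this intersection is transverse and has dimension $m-\rank(V)-1$. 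Consequently $\rank(U-X) \ge \rank(V)+1$, which combined with the upper bound gives $\rank(U-X) = \rank(V)+1$ and the stated equivalence for each $k \in \{2,\dots,m\}$.

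The main obstacle is justifying the genericity assumptions $\mu\ne 0$, $w\notin \mathrm{Im}(V)$, and $y\notin \mathrm{Im}(V^T)$. Each failure encodes an algebraic condition on the data $U$ (with $V$ fixed), since $X,\lambda,\mu,w,y$ all depend algebraically on $U$ via the Lagrange system. To make this precise, I would form an incidence variety parameterizing tuples $(U,X,\lambda,\mu,w,y)$ satisfying the Lagrange equations together with $Xy=0$ and $w^TX=0$, and show by a dimension count that each of the three failure loci maps to a proper Zariski-closed subset of the $U$-space. A careful write-up should also handle the corner case $\rank(X)\le m-2$, where $C(X)=0$ and the cofactor-based analysis degenerates; such critical points form a lower-dimensional stratum that the genericity argument is expected to avoid.
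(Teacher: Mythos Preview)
Your argument and the paper's both begin from the Lagrange equation $U-X=\lambda\,C(X)+\mu\,V$, but diverge from there. The paper expands the $k\times k$ minors $M_{A,B}(U-X)=M_{A,B}(\lambda C(X)+\mu V)$ via the multi-column Laplace expansion and the Jacobi complementary minor theorem, uses $\det(X)=0$ to kill all terms of degree $\ge 2$ in $\lambda$, and arrives at
\[
M_{A,B}(U-X)=M_{A,B}(V)\,\mu^{k}+\Bigl(\sum_{i\in A,\,j\in B}M_{A\setminus\{i\},B\setminus\{j\}}(X)\,M_{A\setminus\{i\},B\setminus\{j\}}(V)\Bigr)\lambda\mu^{k-1},
\]
from which the equivalence is read off. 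Your route via $\rank(C(X))=1$ and kernel analysis is more conceptual: the forward implication $\rank(V)\le k-2\Rightarrow\rank(U-X)\le k-1$ falls out in one line from rank subadditivity, without any determinant machinery. For the converse you trade the minor calculus for the transversality conditions $w\notin\mathrm{Im}(V)$ and $y\notin\mathrm{Im}(V^T)$, which is a clean reformulation of what the paper's formula encodes implicitly.

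Two remarks. First, your list of genericity hypotheses should include $\lambda\neq 0$: if $\lambda=0$ then $U-X=\mu V$ and $\rank(U-X)=\rank(V)$, so the inclusion $\ker(U-X)\subseteq\ker(y^T)$ fails and the claimed equality breaks. Second, you are right that the genericity step is the crux, and honest to flag it as unfinished; note, however, that the paper's final sentence (``the consequence is\ldots if and only if\ldots'') also leaves the reverse implication unjustified---it does not explain why, for a specific critical point with specific $\lambda,\mu,X$, the vanishing of that weighted sum of $(k-1)$-minors forces each $(k-1)$-minor of $V$ to vanish. So your incidence-variety plan would in fact close a gap that the paper leaves open, not introduce extra work relative to it.
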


\begin{proof}
Recall the notation introduced at the beginning of Section \ref{sec: special relations}.\\
We have $\mathcal{I}(\mathcal{L}_{m-1})=\langle\det(X),L\rangle$.
Suppose that $\mathrm{rk}(U-X)\le k-1$, namely $M_{A,B}(U-X)=0$ for all $A,B\subset[m]$ with $|A|=|B|=k$. Using the system \eqref{eq: polynomial system with Lagrange multipliers} we get that
\begin{equation}\label{eq: def GAB}
M_{A,B}(U-X)=M_{A,B}(\lambda\partial(\det(X))+\mu\partial(L))=M_{A,B}(\lambda C(X)+\mu V)\,,
\end{equation}
where $C(X)=(C_{ij}(X))$ is the cofactor matrix of $X$.
Assume $A=B=[k]$. Our goal is to expand the polynomial at the right-hand side of \eqref{eq: def GAB}, which we call $G_k$ for brevity.
First, we consider the following expansion of $M_{[k],[k]}(P+Q)$ for all $P,Q\in\C^{m\times m}$:
\[
M_{[k],[k]}(P+Q)=\sum_{\substack{I,J\subset[k]\\|I|=|J|}}(-1)^{I+J}M_{I,J}(P)M_{[k]\setminus I,[k]\setminus J}(Q)\,,
\]
where $(-1)^{I+J}=(-1)^{\sum_{i\in I}i+\sum_{j\in J}j}$. This identity follows from the Laplace expansion of the determinant in multiple columns, that can be found for example in \cite[Lemma $A.1(f)$]{CSS}. We apply it in the case $P=\lambda C(X)$ and $Q=\mu V$:
\begin{align*}
G_k & = \sum_{\substack{I,J\subset[k]\\|I|=|J|}}(-1)^{I+J}M_{I,J}(C(X))M_{[k]\setminus I,[k]\setminus J}(V)\,\lambda^{|I|}\mu^{k-|I|}\,.
\end{align*}
Then we apply the identity
\[
M_{I,J}(C(X))=
\begin{cases}
1 & \text{if $I=J=\emptyset$}\\
(-1)^{I+J}\det(X)^{|I|-1}M_{[k]\setminus I,[k]\setminus J}(X) & \text{if $|I|=|J|\ge 1$}
\end{cases}
\]
which follows from the relation $C(X)=\det(X)\,X^{-T}$ and the {\em Jacobi complementary minor Theorem}, see \cite{Lalonde} and \cite[Lemma $A.1(e)$]{CSS}. Hence we get
\[
G_k = M_{[k],[k]}(V)\mu^k + \sum_{\substack{I,J\subset[k]\\|I|=|J|\ge 1}}\det(X)^{|I|-1}M_{[k]\setminus I,[k]\setminus J}(X)M_{[k]\setminus I,[k]\setminus J}(V)\,\lambda^{|I|}\mu^{k-|I|}\,.
\]
Finally, using the condition $\det(X)=0$, the previous identity simplifies to
\[
G_k = M_{[k],[k]}(C)\mu^k + \sum_{i,j\in[k]}M_{[k]\setminus\{i\},[k]\setminus \{j\}}(X)M_{[k]\setminus\{i\},[k]\setminus\{j\}}(V)\,\lambda\mu^{k-1}\,.
\]
Similarly, for arbitrary $A,B\subset[m]$ with $|A|=|B|=k$, we get that
\[
M_{A,B}(U-X) = M_{A,B}(C)\mu^k + \sum_{i\in A,j\in B}M_{A\setminus\{i\},B\setminus \{j\}}(X)M_{A\setminus\{i\},B\setminus\{j\}}(V)\,\lambda\mu^{k-1}\,.
\]
The consequence is that a critical point $X$ of $d_U$ is such that $\rank(U-X)\le k-1$ if and only if $M_{A\setminus\{i\},B\setminus\{j\}}(V)=0$ for all $i\in A$, $j\in B$ and $A,B\subset[m]$ with $|A|=|B|=k$, or equivalently $\rank(V)\le k-2$.
\end{proof}

The main observation coming from Proposition \ref{prop: conditions L on rank(U-X)} is that a generic linear constraint $L$ destroys the structure of critical points coming from Theorem \ref{thm: Eckart-Young}, in particular the relations $M_{A,B}(U-X)$ for suitable $A$ and $B$.
However, if $L$ is special, these conditions might still hold, even in the case when $L$ involves entries $x_{ij}$ with $(i,j)\in A\times B$.

\section{Computations of Euclidean Distance degrees}\label{sec: EDdegree}

In this section we present various experiments that study the ED degree of $\mathcal{L}_r^S$, when $r\ge 2$ and the zero pattern $S$ involves only elements in the diagonal.

\medskip
First, we restrict to square matrices and consider the zero pattern $S=\{(1,1)\}$.
Since the number of (complex) critical points of $d_U$ on $\mathcal{L}_{n-1}^S$ is constant for a generic (complex) data matrix $U$, it is reasonable to apply a monodromy technique for computing these critical points numerically.
For this, we use the HomotopyContinuation.jl~\cite{HomotopyContinuation.jl} software package.
The number of solutions obtained (that is, the ED degree of $\mathcal{L}_{n-1}^S$ with respect to the Frobenius inner product) is reported in Table~\ref{table: EDdegree 1 zero}.
We checked symbolically in Maple\texttrademark~2016 \cite{maple} that the number of numerical solutions obtained coincides with the degree of the ideal of rank-$r$ critical points $\mathcal{I}(Z_{U,n-1})$, that is the ED degree of $\mathcal{L}_{n-1}^S$.

\begingroup
\begin{table}[htbp]
	\centering
	\begin{tabular}{|c||cccccccc|}
		\hline
		$n$ & 3 & 4 & 5 & 6 & 7 & 8 & 9 & 10\\
    	\hhline{|=||========|}
		$\mathrm{EDdegree}(\mathcal{L}_{n-1}^S)$ & 8 & 13 & 18 & 23 & 28 & 33 & 38 & 43 \\
		\hline
	\end{tabular}
	\vspace*{1mm}
	\caption{ED degrees for $n \times n$ matrices of $\rank \leq n-1$ and $S=\{(1,1)\}$.}
	\label{table: EDdegree 1 zero}
\end{table}
\endgroup

Our experimental results support the following conjecture.
\begin{conjecture}\label{conj:ED-1z-rn-1}
Consider the variety $\mathcal{L}_{n-1}^S\subset\C^{n\times n}$, where $S=\{(1,1)\}$.
Then
\[
\mathrm{EDdegree}(\mathcal{L}_{n-1}^S)=5(n-1)-2\,.
\]
\end{conjecture}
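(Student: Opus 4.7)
The proof plan is to reduce the critical-point equations to a polynomial system via Lagrangian analysis, then count solutions using a combination of Bezout and excess-intersection arguments.

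Step 1 (Lagrangian reduction). A smooth rank-$(n-1)$ critical point $X\in\mathcal{L}_{n-1}^S$ satisfies
\[
U-X=\lambda\, b a^T + \mu\, E_{11},\qquad X a=0,\ X^T b=0,
\]
where $a,b$ span the right and left kernels of $X$; here the normal space to $\mathcal{X}_{n-1}$ at $X$ is spanned by the cofactor matrix $C(X)\propto b a^T$ (the scalar absorbed into $\lambda$), and $\mu=u_{11}-\lambda a_1 b_1$ comes from $x_{11}=0$. Introducing $B\coloneqq\lambda b$ to remove an extraneous factor and substituting $X=U-\lambda b a^T-\mu E_{11}$ into the kernel equations yields
\[
Ua=(a^T a)\,B+\mu\,a_1\,e_1,\quad U^T B=(B^T B)\,a+\mu\,B_1\,e_1,\quad \mu=u_{11}-a_1 B_1,
\]
invariant under the rescaling $(a,B)\mapsto(\alpha a,\alpha^{-1}B)$.

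Step 2 (elimination). The first equation solves $B_1=p/s'$ and $B_i=q_i/s$ for $i\geq 2$, where $s=a^T a$, $s'=s-a_1^2$, $p=(Ua)_1-u_{11}a_1$, and $q_i=(Ua)_i$. Substituting into the second equation, the first component yields the cubic $s\,\tilde p-a_1 Q=0$ with $\tilde p=\sum_{k\geq 2}u_{k1}q_k$ and $Q=\sum_{k\geq 2}q_k^2$, while the components $i\geq 2$ give $n-1$ equations of degree $7$ in $a$. A direct expansion shows the syzygy $\sum_{i\geq 2}a_i\cdot(\text{eq}_i)=-a_1 s'^2\cdot(s\tilde p-a_1 Q)$, so the effective rank is $n-1$ on $\mathbb{P}^{n-1}$: take the cubic together with $n-2$ of the degree-$7$ equations.

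Step 3 (Bezout and excess). The naive Bezout bound $3\cdot 7^{n-2}$ vastly exceeds $5n-7$. The excess is absorbed by spurious components on the loci $\{s=0\}$ (the isotropic quadric), $\{s'=0\}$ (where $B_1$ degenerates), and the point $[e_1]\in\mathbb{P}^{n-1}$ (where $B$ is undetermined and, as one checks for generic $U$, no genuine critical point exists). The plan is to identify each such component, compute its contribution via Segre-class excess-intersection, and show the total equals $3\cdot 7^{n-2}-(5n-7)$, leaving exactly $5n-7$ honest critical points.

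Main obstacle: the excess-intersection bookkeeping is delicate, and the linearity of $5n-7=5(n-1)-2$ in $n$ suggests a hidden structure that may be captured more cleanly by polar methods. An alternative route is to use the formula of \cite{draisma2016euclidean} for the ED degree of a hyperplane section of $\mathcal{X}_{n-1}$ in terms of its polar classes, then quantify the correction due to the special position of the hyperplane $\{x_{11}=0\}$: its normal vector $E_{11}$ lies in the dual variety $\mathcal{X}_{n-1}^\vee=\mathcal{X}_1$, so the hyperplane is tangent to $\mathcal{X}_{n-1}$ in the dual sense, and the resulting correction should produce $5n-7$ directly. Combining the Lagrangian parametrization of Step 1 with this polar-class perspective is, in my view, the most promising path to a clean proof.
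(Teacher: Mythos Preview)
The paper does not prove this statement: it is recorded there as a \emph{conjecture}, supported only by numerical computations (Table~\ref{table: EDdegree 1 zero}) carried out with \texttt{HomotopyContinuation.jl} and checked symbolically for small $n$. There is therefore no proof in the paper to compare your proposal against.

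As for your proposal itself, it is not a proof but an outline in which the decisive step is left undone. Steps~1 and~2 are a reasonable way to parametrize the critical locus, but by your own account the resulting Bezout number $3\cdot 7^{n-2}$ is enormously larger than the target $5n-7$, and the entire content of the claim lies in showing that the excess is exactly $3\cdot 7^{n-2}-(5n-7)$. You do not carry out that computation; you only name the loci $\{s=0\}$, $\{s'=0\}$, $[e_1]$ and assert that a Segre-class calculation ``should'' account for the discrepancy. That is the whole problem, and it is not addressed. Your closing paragraph concedes as much (``Main obstacle: the excess-intersection bookkeeping is delicate'') and then pivots to a second, also-uncompleted approach via polar classes and the tangency of $\{x_{11}=0\}$ to $\mathcal{X}_{n-1}$. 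Neither route is brought to a conclusion.

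In short: the statement is open in the paper, and your proposal does not close it. If you want to pursue the Lagrangian route, you must actually compute the multiplicities along the spurious components (and verify that for generic $U$ no additional degenerations occur); if you prefer the polar-class route, you must make precise which correction formula applies when the hyperplane's normal $E_{11}$ lies on $\mathcal{X}_{n-1}^\vee=\mathcal{X}_1$ and evaluate it. Either would be a genuine contribution, since the paper offers only experimental evidence.
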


Next, we fix the diagonal zero pattern $S = \{(1,1),\ldots,(s,s)\}$ for $s\in[4]$, and we consider the variety $\mathcal{L}_{2}^S\subset\C^{m\times n}$.
We present in Tables \ref{table: 1z-r2},\ref{table: 2z-r2},\ref{table: 3z-r2},\ref{table: 4z-r2} the values of $\mathrm{EDdegree}(\mathcal{L}_{2}^S)$ computed depending on the format $m\times n$.
Our experiments support the following conjectural formulas.

\begin{conjecture}\label{conj: ED-1,2,3,4z-r2}
Consider the variety $\mathcal{L}_{2}^S\subset\C^{m\times n}$ with $S = \{(1,1),\ldots,(s,s)\}$ for $s\in[4]$. Let $l=\min(m,n)$. Then
\[
\mathrm{EDdegree}(\mathcal{L}_{2}^S)=
\begin{cases}
3(l-1)^2-2(l-1) & \text{if $s=1$}\\
18(l-2)^2+6(l-2)+1 & \text{if $s=2$ and $m\neq n$}\\
18(l-2)^2+10(l-2)+1 & \text{if $s=2$ and $m=n$}\\
108(l-3)^2+144(l-3)+30 & \text{if $s=3$ and $m=n$}\\
648(l-4)^2+1600(l-4)+488 & \text{if $s=4$ and $m=n$}\,.
\end{cases}
\]
\end{conjecture}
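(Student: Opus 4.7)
The strategy is to combine two ingredients: \emph{(i)} establish that for each fixed $s\in[4]$, the number $\mathrm{EDdegree}(\mathcal{L}_2^S)$ is a polynomial function of $l=\min(m,n)$ of degree exactly two once $l$ is large enough, and \emph{(ii)} determine the three coefficients of this quadratic by pinning down its values at sufficiently many points. Ingredient \emph{(ii)} is in principle a finite computation, and Table \ref{table: EDdegree 1 zero} together with Tables \ref{table: 1z-r2}--\ref{table: 4z-r2} already furnish most of the required data; the substantial work is ingredient \emph{(i)}.

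For the polynomiality step, I would start from the Lagrange multiplier system \eqref{eq: polynomial system with Lagrange multipliers} specialised to $r=2$ and $S=\{(1,1),\ldots,(s,s)\}$. For a generic $U\in\C^{m\times n}$ the critical locus is a finite reduced scheme in the affine space of matrices with $x_{ii}=0$ for $i\in[s]$ together with the $(\lambda,\mu)$ multipliers. The plan is to argue that, for a fixed $s$, this system is obtained from a ``universal'' one on the row and column index sets by adjoining $m-s$ extra ``free'' row indices and $n-s$ extra ``free'' column indices, each of which enters the equations in a controlled multilinear way. Concretely, I would stratify critical points according to the rank profile of the submatrices $X_{[s],[s]}$, $X_{[s],[n]\setminus[s]}$, $X_{[m]\setminus[s],[s]}$ and apply Lemma \ref{lemma: critical points projection} and Lemma \ref{lemma: ED degree union} whenever $\mathcal{L}_2^S$ degenerates or splits into components. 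The expected output is a presentation of $\mathrm{EDdegree}(\mathcal{L}_2^S)$ as a sum, indexed by a finite combinatorial set depending only on $s$, of contributions each of which is polynomial in $(m-s)$ and $(n-s)$ of bidegree $\le(1,1)$; this produces a polynomial of total degree two in $l$ once one restricts attention to square or nearly-square formats. The distinction between $m=n$ and $m\ne n$ visible in the $s=2$ formula, and the absence (conjecturally) of a non-square formula for $s=3,4$, should emerge from symmetry-induced coincidences of critical points in the square case, analogous to those already observed in the unstructured Eckart--Young count.

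Once polynomiality with the predicted bidegree structure is known, interpolation finishes the job: for $s=1$ one needs three values of $l$; for $s=2$ one needs three square and three non-square values; for $s=3,4$ one needs three square values. These are within range of \texttt{HomotopyContinuation.jl} monodromy runs cross-checked symbolically in Macaulay2, as was already done in the paper for the seed values. A final sanity check would be to verify the formulas against the closed-form case $s=1$, $m=n$, which is recovered from Conjecture \ref{conj:ED-1z-rn-1} at $n=3$, and against the block-diagonal specialisations of Lemma \ref{remark: block-rank-1-approximation}.

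The main obstacle, and the reason this is stated as a conjecture, is the polynomiality claim itself. The variety $\mathcal{L}_2^S$ is singular, and its ideal is \emph{not} a generic linear section of $\mathcal{X}_2$, so the clean intersection-theoretic tools used in \cite{OSS-esslra-14} and \cite{draisma2016euclidean} do not apply directly; the coordinate hyperplanes $x_{ii}=0$ meet $\mathcal{X}_2$ non-transversely along the coordinate subspace of matrices whose $i$-th row or $i$-th column vanishes, and tracking how these non-generic intersections contribute to (or fail to contribute to) the ED degree is exactly what makes the formulas for $s\ge 2$ sensitive to whether $m=n$. Making the stratification above rigorous, and in particular controlling excess components arising from the vanishing of $2\times 2$ blocks inside $X_{[s],[s]}$, is the step I expect to consume the bulk of the proof.
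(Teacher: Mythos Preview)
The paper does not prove this statement: it is labeled a conjecture and is supported only by the numerical ED degree computations in Tables~\ref{table: 1z-r2}--\ref{table: 4z-r2}, obtained via \texttt{HomotopyContinuation.jl} and cross-checked symbolically for small formats. There is no argument in the paper beyond interpolation of these experimental values.

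Your proposal is therefore not competing with an existing proof but attempting to supply one, and the gap you yourself flag is the real one. Ingredient \emph{(ii)} is fine and already essentially done in the paper's tables. Ingredient \emph{(i)}, however, is only a sketch: you assert that stratifying by the rank profiles of the four blocks of $X$ and invoking Lemmas~\ref{lemma: critical points projection} and~\ref{lemma: ED degree union} will yield a finite sum of contributions each polynomial of bidegree $\le(1,1)$ in $(m-s,n-s)$, but you do not actually carry this out for any $s$, and there is no reason a priori that the strata are themselves varieties whose ED degrees are polynomial in the format. The non-transversality you mention is precisely what prevents the Chern-class formulas of \cite{draisma2016euclidean} or the generic-section results of \cite{OSS-esslra-14} from applying, and you offer no replacement mechanism for computing the ED degree of each stratum. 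The distinction between $m=n$ and $m\neq n$ for $s=2$, and the apparent failure of a clean non-square formula for $s\ge 3$ (visible already in Table~\ref{table: 3z-r2}, where the off-diagonal entries do not fit a single quadratic in $l$), suggest that the stratification interacts with the square/non-square dichotomy in a way your outline does not yet capture. As written, the proposal is a plausible roadmap but not a proof; the conjecture remains open.
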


\begingroup
\begin{table}[htbp]
\centering
\begin{tabular}{|c||cccccccccc|}
\hline
\backslashbox{$s$}{$n$} & 3 & 4 & 5 & 6 & 7 & 8 & 9 & 10 & 11 & 12\\
\hhline{|=||==========|}
3 & 8 & 8 & 8 & 8 & 8 & 8 & 8 & 8 & 8 & 8 \\
4 & 8 & 21 & 21 & 21 & 21 & 21 & 21 & 21 & 21 & 21 \\
5 & 8 & 21 & 40 & 40 & 40 & 40 & 40 & 40 & 40 & 40 \\
6 & 8 & 21 & 40 & 65 & 65 & 65 & 65 & 65 & 65 & 65 \\
7 & 8 & 21 & 40 & 65 & 96 & 96 & 96 & 96 & 96 & 96 \\
8 & 8 & 21 & 40 & 65 & 96 & 133 & 133 & 133 & 133 & 133 \\
9 & 8 & 21 & 40 & 65 & 96 & 133 & 176 & 176 & 176 & 176 \\
10 & 8 & 21 & 40 & 65 & 96 & 133 & 176 & 225 & 225 & 225 \\
11 & 8 & 21 & 40 & 65 & 96 & 133 & 176 & 225 & 280 & 280 \\
12 & 8 & 21 & 40 & 65 & 96 & 133 & 176 & 225 & 280 & 341 \\
\hline
\end{tabular}
\vspace*{1mm}
\caption{{\normalsize Values of $\mathrm{EDdegree}(\mathcal{L}_{2}^S)$ for $S=\{(1,1)\}$.}}
\label{table: 1z-r2}
\end{table}

\begin{table}[htbp]
\centering
\begin{tabular}{|c||cccccccccc|}
\hline
\backslashbox{$s$}{$n$} & 3 & 4 & 5 & 6 & 7 & 8 & 9 & 10 & 11 & 12\\
\hhline{|=||==========|}
3 & 25 & 29 &  29 &  29 &  29 &  29 &  29 &  29 &  29 &  29 \\
4 & 29 & 85 &  93 &  93 &  93 &  93 &  93 &  93 &  93 &  93 \\
5 & 29 & 93 & 181 & 193 & 193 & 193 & 193 & 193 & 193 & 193 \\
6 & 29 & 93 & 193 & 313 & 329 & 329 & 329 & 329 & 329 & 329 \\
7 & 29 & 93 & 193 & 329 & 481 & 501 & 501 & 501 & 501 & 501\\
\hline
\end{tabular}
\vspace*{1mm}
\caption{{\normalsize Values of $\mathrm{EDdegree}(\mathcal{L}_{2}^S)$ for $S=\{(1,1),(2,2)\}$.}}
\label{table: 2z-r2}
\end{table}

\begin{table}
\centering
\begin{tabular}{|c||ccccc|}
\hline
\backslashbox{$s$}{$n$} & 3 & 4 & 5 & 6 & 7 \\
\hhline{|=||=====|}
3 & 30 & 62 & 66 & 66 & 66 \\
4 & 62 & 282 & 358 & 366 & 366 \\
5 & 66 & 358 & 750 & 870 & 882 \\
6 & 66 & 366 & 870 & 1434 &  1598 \\
7 & 66 & 366 & 882 & 1598 & 2334\\
\hline
\end{tabular}
\vspace*{1mm}
\caption{{\normalsize Values of $\mathrm{EDdegree}(\mathcal{L}_{2}^S)$ for $S=\{(1,1),(2,2),(3,3)\}$.}}
\label{table: 3z-r2}
\end{table}

\begin{table}[htbp]
\centering
\begin{tabular}{|c||ccccc|}
\hline
\backslashbox{$s$}{$n$} & 4 & 5 & 6 & 7 & 8 \\
\hhline{|=||=====|}
4 & 488 & 968 & 1072 & 1080 & 1080 \\
5 & 968 & 2736 & ?& ?& ?\\
\hline
\end{tabular}
\vspace*{1mm}
\caption{{\normalsize Values of $\mathrm{EDdegree}(\mathcal{L}_{2}^S)$ for $S=\{(1,1),(2,2),(3,3),(4,4)\}$.}}
\label{table: 4z-r2}
\end{table}
\endgroup

\begin{remark}
The values of $\mathrm{EDdegree}(\mathcal{L}_{2}^S)$ in Table \ref{table: 1z-r2} are known as {\em octagonal numbers}: writing $0,1,2,\ldots$ in a hexagonal spiral around $0$, then these are numbers on the line starting from $0$ and going in the direction of $1$ \cite[\href{http://oeis.org/A000567}{A000567}]{oeis}.
Also the diagonal entries of Table \ref{table: 2z-r2} form another interesting integer sequence, see \cite[\href{https://oeis.org/A081272}{A081272}]{oeis}.
\end{remark}

Our experiments suggest a formula for $\mathrm{EDdegree}(\mathcal{L}_{2}^S)$ in the square case.

\begin{conjecture}\label{conj: r2 diagonal zero pattern}
Consider the variety $\mathcal{L}_{2}^S\subset\C^{m\times m}$, where $S$ is the zero pattern $\{(1,1),\ldots,(s,s)\}$ for some $s\ge 2$.
Then for some constant $c$
\[
\mathrm{EDdegree}(\mathcal{L}_{2}^S)=3^s\,2^{s-1}(n-s)^2 + s^{s-1}(s+1)^{\ceil*{s/2}} (n-s) + c\,.
\]
\end{conjecture}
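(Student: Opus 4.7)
My plan is to approach the conjectured formula via a Lagrangian parameterization of $\mathcal{L}_2^S$ together with an intersection-theoretic degeneration argument, though I want to emphasize upfront that the statement is posed as a conjecture in the paper and a complete proof appears to be quite involved.

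First, I would parameterize $\mathcal{X}_2$ by the factorization $X = AB^T$ with $A, B \in \C^{n \times 2}$, lifting the zero conditions $x_{ii} = 0$ for $i \in [s]$ to the bilinear equations $A_{i,:} B_{i,:}^T = 0$. The Lagrangian critical equations then read
\[
(2(AB^T - U) + \Lambda) B = 0, \qquad (2(AB^T - U) + \Lambda)^T A = 0, \qquad A_{i,:} B_{i,:}^T = 0 \ \forall\, i \in [s],
\]
where $\Lambda$ is the $n \times n$ diagonal matrix with $\lambda_1, \ldots, \lambda_s$ in its first $s$ diagonal entries and zeros elsewhere. The solutions modulo the $\mathrm{GL}_2$ action on $(A,B)$ are in bijection with $Z_{U,2}^S$ for a generic data matrix $U$, giving an explicit polynomial system in $4n+s$ unknowns whose solution count is exactly $\mathrm{EDdegree}(\mathcal{L}_2^S)$.

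The second step is to exploit the splitting $[n] = [s] \sqcup ([n] \setminus [s])$ of row and column indices. For the ``free rows'' $i \notin [s]$ the critical equations have the same shape as in the unstructured setting, so by Theorem~\ref{thm: Eckart-Young} and Proposition~\ref{prop: linear_space_for_rank_one_critical_points} their contribution to the solution count should scale polynomially in $n-s$. I would take the limit $n \to \infty$ and interpret the leading $(n-s)^2$ coefficient as an asymptotic product of local intersection numbers: an unstructured rank-$2$ ED degree on the free block, multiplied by $s$ independent local contributions arising from resolving the rank-$2$ condition at each constrained diagonal position. The expected local factor $3\cdot 2$ at each such position accounts for the prefactor $3^s \, 2^{s-1}$. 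The sub-leading coefficient $s^{s-1}(s+1)^{\ceil*{s/2}} (n-s)$ would then emerge from an inclusion-exclusion over subsets $T \subset [s]$ of constrained rows whose Lagrange multipliers $\lambda_i$ vanish simultaneously, with each such locus contributing an excess-intersection correction linear in $n-s$.

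The main obstacle is precisely the sub-leading coefficient: the factor $s^{s-1}(s+1)^{\ceil*{s/2}}$ has an intricate arithmetic structure, and the exponent $\ceil*{s/2}$ in particular strongly suggests a parity phenomenon in how the $s$ constrained rows can be paired within a rank-$2$ factorization. I do not at present see a direct geometric mechanism producing this ceiling, and rigorously identifying it will likely require either a careful analysis of the singular stratification of $\mathcal{X}_2 \cap \mathcal{L}^S$ indexed by how many Lagrange multipliers vanish, or a delicate combinatorial argument tracking interactions between pairs of constrained rows in the rank-$2$ parameterization. Once both the leading and sub-leading coefficients are established, the constant $c$ can be pinned down by matching the resulting formula against the experimental values in Tables~\ref{table: 2z-r2}, \ref{table: 3z-r2} and \ref{table: 4z-r2}.
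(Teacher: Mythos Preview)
The statement is explicitly labeled a \emph{Conjecture} in the paper and is offered without proof; it is presented purely as a pattern extrapolated from the numerical data in Tables~\ref{table: 2z-r2}--\ref{table: 4z-r2}. There is therefore no proof in the paper to compare your proposal against.

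Your proposal is likewise not a proof, and you say so yourself. The Lagrangian parameterization $X=AB^T$ is a reasonable way to set up the critical equations, but every substantive step after that is either heuristic or explicitly left open. You assert that each constrained diagonal position contributes a local factor of $3\cdot 2$ to the leading coefficient without any argument establishing this multiplicativity; you state outright that you ``do not at present see a direct geometric mechanism'' for the sub-leading coefficient $s^{s-1}(s+1)^{\lceil s/2\rceil}$; and determining $c$ by matching against the experimental tables is data-fitting, not proof. What you have written is a research plan, not a demonstration.

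It is also worth flagging a consistency issue with the target statement itself. For $s=2$ the formula in Conjecture~\ref{conj: r2 diagonal zero pattern} gives linear coefficient $s^{s-1}(s+1)^{\lceil s/2\rceil}=2\cdot 3=6$, whereas Conjecture~\ref{conj: ED-1,2,3,4z-r2} (derived from the same tables) gives linear coefficient $10$ in the square case. The two conjectures agree only for $s\in\{3,4\}$, so the formula you are trying to prove may itself require correction at $s=2$ before any proof strategy can succeed.
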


\begin{remark}
In Tables \ref{table: 1z-r2},\ref{table: 2z-r2},\ref{table: 3z-r2},\ref{table: 4z-r2} we observe the following  symmetry property for the variety $\mathcal{L}_{r}^S\subset\C^{m\times n}$, where $S=\{(1,1),\ldots,(s,s)\}$ for some $s\ge 2$:
\[
\mathrm{EDdegree}(\mathcal{L}_r^S)(m,n,r,s)=\mathrm{EDdegree}(\mathcal{L}_r^S)(n,m,r,s)\,.
\]
This identity always holds, because the two structured best rank-$r$ approximation problems are the same after relabeling variables.
\end{remark}

We conclude by performing the same experiments showed at the beginning of the section, but restricting our study to the subspace $\mathrm{Sym}_n(\C)\subset\C^{n\times n}$ of $n\times n$ symmetric matrices.
We denote again by $\mathcal{L}_2^S$ the variety of symmetric matrices of rank at most $2$ with (symmetric) zero pattern $S\subset[n]\times[n]$.
The values of $\mathrm{EDdegree}(\mathcal{L}_{2}^S)$ with respect to the diagonal zero pattern $S=\{(1,1),\ldots,(s,s)\}$ are reported in Table \ref{table: symmetric 1,2,3,4z-r2}.

\begingroup
\begin{table}[htbp]
\centering
\begin{tabular}{|c||ccccccccc|}
\hline
\backslashbox{$s$}{$n$} & 2 & 3 & 4 & 5 & 6 & 7 & 8 & 9 & 10\\
\hhline{|=||=========|}
1 & 1 & 4 & 7  & 10  & 13  & 16  & 19  & 22  & 25\\
2 & 1 & 7 & 16 & 25  & 34  & 43  & 52  & 61  & 70\\
3 &   & 4 & 31 & 58  & 85  & 112 & 139 & 166 & 193\\
4 &   &   & 28 & 109 & 190 & 271 & 352 & 433 & 514\\
\hline
\end{tabular}
\vspace*{1mm}
\caption{{\normalsize Values of $\mathrm{EDdegree}(\mathcal{L}_{2}^S)\subset\mathrm{Sym}_n(\C)$ for $S=\{(1,1),\ldots,(s,s)\}$.}}
\label{table: symmetric 1,2,3,4z-r2}
\end{table}
\endgroup

\begin{conjecture}\label{conj: ED-1,2,3,4z-r2 symmetric}
Consider the variety $\mathcal{L}_{2}^S\subset\mathrm{Sym}_n(\C)$ with $S=\{(1,1),\ldots,(s,s)\}$ for all $s\in[4]$.
Then
\[
\mathrm{EDdegree}(\mathcal{L}_{2}^S)=
\begin{cases}
3(n-1)-2 & \mbox{if $s=1$}\\
9(n-2)-2 & \mbox{if $s=2$} \\
27(n-3)+4 & \mbox{if $s=3$} \\
81(n-4)+28 & \mbox{if $s=4$}\,.
\end{cases}
\]
\end{conjecture}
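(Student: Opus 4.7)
The plan is to apply the standard Veronese/secant parameterization of symmetric rank-at-most-$2$ matrices, decompose $\mathcal{L}_2^S$ into components based on the vanishing patterns, and sum ED degrees via Lemma~\ref{lemma: ED degree union}. Over $\C$, any symmetric $n \times n$ matrix of rank at most $2$ can be written as $X = \tfrac12(pq^T + qp^T)$ for some $p, q \in \C^n$, unique up to the $\C^*$-scaling $(p, q) \sim (tp, t^{-1}q)$ and the swap $p \leftrightarrow q$; the diagonal constraint $x_{ii} = 0$ then becomes $p_i q_i = 0$. For each $T \subset [s]$ let $\mathcal{V}_T$ be the closure of the image of $\{(p, q) : p_i = 0\ \forall i \in T,\ q_i = 0\ \forall i \in [s] \setminus T\}$. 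Then $\mathcal{L}_2^S = \bigcup_T \mathcal{V}_T$; the swap identifies $\mathcal{V}_T$ with $\mathcal{V}_{[s] \setminus T}$, and permuting the first $s$ rows of $\mathrm{Sym}_n(\C)$ shows that $\mathrm{EDdegree}(\mathcal{V}_T)$ depends only on $k = \min(|T|, s - |T|)$.

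A dimension count gives $\dim \mathcal{V}_T = 2n - 1 - s$ for every $T$ (since $(p, q)$ has $2n - s$ free coordinates modulo the $1$-dimensional scaling), so no $\mathcal{V}_T$ is contained in another, and Lemma~\ref{lemma: ED degree union} yields
\[
\mathrm{EDdegree}(\mathcal{L}_2^S) = \sum_{k=0}^{\lfloor s/2 \rfloor} N(s, k)\, e_k(n),
\]
where $e_k(n) = \mathrm{EDdegree}(\mathcal{V}_T)$ for any $|T| = k$ and $N(s, k)$ is the number of swap-orbits (so $N(s, k) = \binom{s}{k}$ if $k < s/2$ and $N(s, s/2) = \binom{s}{s/2}/2$). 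To evaluate $e_k(n)$, I would first prove that it is linear in $n$ by an induction/specialization argument: enlarging $n$ to $n + 1$ corresponds to adjoining one ``free'' row/column to $U$, and by a deformation argument combined with Lemma~\ref{lemma: critical points projection} each such enlargement contributes a fixed number of new critical points. Once linearity is known, two concrete values of $e_k(n)$ determine it; these can be obtained symbolically via Gr\"obner bases in Macaulay2 or numerically via the monodromy method in \texttt{HomotopyContinuation.jl}, exactly as in Section~\ref{sec: EDdegree}. Summing $N(s, k) e_k(n)$ for $s \in [4]$ should then reproduce the closed forms stated in the conjecture.

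The principal obstacle is proving linearity of $e_k(n)$ in $n$ cleanly. Each $\mathcal{V}_T$ is the image of a linear subspace under the nonlinear map $\phi(p, q) = \tfrac12(pq^T + qp^T)$, and its singular locus includes all loci where $p$ and $q$ partially vanish or become proportional. Consequently, intersection-theoretic formulas on $\PP^{n-1} \times \PP^{n-1}$ incur excess contributions that need to be identified. A conceptual explanation of the leading factor $3^s$ may come from the degree-$3$ Veronese embedding $\PP^1 \hookrightarrow \PP^2$ underlying the Waring parameterization of the symmetric secant: each independent zero $x_{ii} = 0$ should contribute a local factor of $3$ via this Veronese, so that across $s$ independent constraints the slope of $\mathrm{EDdegree}$ grows as $3^s$. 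If a clean proof of linearity proves elusive, a fallback (given that $s \in [4]$) is to verify the conjectured formulas numerically for a few additional values of $n$ beyond those in Table~\ref{table: symmetric 1,2,3,4z-r2} and Lagrange-interpolate on the already-known linear form, reducing the conjecture to a finite verification.
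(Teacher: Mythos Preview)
The paper does not prove this statement: it is explicitly labeled a \emph{Conjecture}, supported only by the numerical data in Table~\ref{table: symmetric 1,2,3,4z-r2} computed with \texttt{HomotopyContinuation.jl}. There is no argument in the paper beyond those experiments, so your proposal is not to be compared against an existing proof but assessed on its own.

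Your decomposition of $\mathcal{L}_2^S$ into the components $\mathcal{V}_T$ is sound. The parametrization $X=\tfrac12(pq^T+qp^T)$ does have stabilizer exactly $\C^*\rtimes\Z/2$ on generic rank-$2$ points, the constraint $x_{ii}=0$ does become $p_iq_i=0$, the images $\mathcal{V}_T$ are irreducible of common dimension $2n-1-s$, and one checks they are pairwise distinct up to swap (for $i\in[s]\setminus T$ the $i$th row of $X$ is a multiple of $q$, so the partition $(T,[s]\setminus T)$ is recoverable from row-proportionality). So Lemma~\ref{lemma: ED degree union} applies and reduces the problem to computing each $e_k(n)$. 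This is genuine progress beyond what the paper does.

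The gap is exactly where you flag it: the linearity of $e_k(n)$ in $n$. The sketch ``enlarging $n$ to $n+1$ adjoins one free row/column and by a deformation argument each enlargement contributes a fixed number of new critical points'' is not an argument. Lemma~\ref{lemma: critical points projection} lets you replace $U$ by its projection, but it does not relate critical points on $\mathcal{V}_T\subset\mathrm{Sym}_n$ to those on the corresponding variety in $\mathrm{Sym}_{n+1}$; the critical equations are global and the new coordinate interacts with all others through the rank condition. Without a concrete degeneration or a Chern-class computation on a resolution of $\mathcal{V}_T$, linearity remains unproven. Your fallback---numerical verification for a few more $n$ followed by interpolation---is precisely how the paper arrived at the conjecture in the first place, so it adds confidence but not proof. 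Finally, the heuristic attributing the factor $3^s$ to a ``degree-$3$ Veronese $\PP^1\hookrightarrow\PP^2$'' is off: the relevant Veronese for symmetric matrices is the quadratic one, and the source of the $3$ is more plausibly the ED degree of a single conic constraint, but this too would need justification.
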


\section{Nonnegative low-rank matrix approximation}\label{sec: nonnegative}

In this section, we apply rank-two approximation with zeros to the problem of nonnegative rank-two approximation. Our goal is to find the best nonnegative rank-two approximation with a guarantee that we have found the correct solution.
There are two options for the critical points of the Euclidean distance function over $\sM_2$:
\begin{enumerate}
	\item A critical point of the Euclidean distance function over $\sM_2$ is a critical point of the Euclidean distance function over the set $\mathcal{X}_2$ of matrices of rank at most two.
	\item A critical point of the Euclidean distance function over $\sM_2$ lies on the boundary of $\sM_2$, i.e. the critical point contains one or more zero entries.
\end{enumerate}

In Example~\ref{example:3x3}, we consider $3 \times 3$ matrices and show that computing the Euclidean distance to $756$ points guarantees finding the best nonnegative rank-two approximation.
This example together with the ED degree computations in Section~\ref{sec: EDdegree} suggests that the exact nonnegative rank-two approximation problem is highly nontrivial, although in general problems on nonnegative decompositions tend to be easier for decompositions of size at most $2$ or $3$.

Recall, that for a $m \times n$ matrix $M$ there is an algorithm for nonnegative factorization with complexity $O((mn)^{O(r^2 2^r)})$, where $r$ is the nonnegative rank \cite{arora2016computing}, see also \cite{moitra2016almost}.
We can also compute a rank-$r$ approximate nonnegative factorization, under the Frobenius norm $\|M\|_{F}$, in $2^{\text{poly}(r \lg(1/\epsilon))}$ with relative error $O(\epsilon^{1/2} r^{1/4})$ \cite{arora2016computing}.
We notice that both algorithms run in polynomial time when the rank is fixed. Nevertheless, their implementation is far from straightforward and the exact constants hidden in the big-O notation could be rather big.

We end this section with simulations demonstrating that most of the time, the optimal solution is given by a critical point with a few zeros.
The reader might wonder whether it is interesting to consider the $3 \times 3$ case, when in practical applications much larger matrices are considered.
We believe that thoroughly understanding small cases is important for understanding the structure of the problem, and might provide insights for developing better numerical algorithms.

In the following example, we demonstrate that in the case of $3 \times 3$-matrices, the critical points of the rank-two approximation with zeros can be often described explicitly.

\begin{example}
Let $U=(u_{ij})\in\R^{3\times 3}$. We consider the best rank-two approximation problem with zeros in $S=\{(1,1),(1,2),(2,2)\}$.
Then there are three critical points, each of which has in addition to the entries in $S$ one further entry equal to zero and all other entries are equal to the corresponding entries of the matrix $U$.
Specifically, the critical points are
\[
\begin{pmatrix}
0&0&0\\
u_{21} & 0 & u_{23}\\
u_{31} & u_{32} & u_{33}
\end{pmatrix}\,,
\quad
\begin{pmatrix}
0&0&u_{13}\\
0 & 0 & u_{23}\\
u_{31} & u_{32} & u_{33}
\end{pmatrix}\,,
\quad
\begin{pmatrix}
0&0&u_{13}\\
u_{21} & 0 & u_{23}\\
u_{31} & 0 & u_{33}
\end{pmatrix}\,.
\]
The additional entries that are set to zero can be read from the determinant of the matrix
\[
\begin{pmatrix} 0 & 0 & x_{13}\\ x_{21} & 0 & x_{23} \\ x_{31} & x_{32} & x_{33} \end{pmatrix}\,,
\]
that is $x_{13} x_{21} x_{32}$.
Its factors correspond precisely to the additional entries that are set to zero to obtain the three critical points.

In other words, we consider the Euclidean distance minimization problem to the variety defined by $x_{13} x_{21} x_{32}$.
Since this variety is reducible, we can consider the Euclidean distance minimization problem to each of its three irreducible components.
Each of the irreducible components has ED degree 1.

This example generalizes to $n \times n$ matrices whose non-zero entries form a lower triangular submatrix.
The determinant of such a matrix is the product of the diagonal elements, and hence the critical points of the best rank $(n-1)$-approximation problem are obtained by adding a zero to the diagonal.

A more interesting example is given when $S=\{(1,1),(1,2)\}$.
In this case, the determinant is $x_{13}(-x_{22} x_{31} + x_{21} x_{32})$.
One of the critical points has the entry $x_{13}$ equal to zero and other entries equal to the corresponding entries of $U$.
The two other critical points agree with $U$ in the third column and the $2 \times 2$ submatrix defined by the rows $2,3$ and columns $1,2$ is equal to one of the critical points of the rank-one approximation for the corresponding $2 \times 2$ submatrix of $U$.

This example generalizes to a zero pattern that contains all but one entry in a row or in a column.
Then the determinant factors as a variable times a $(n-1) \times (n-1)$ subdeterminant.
One critical point is obtained by adding the missing zero and the rest of the critical points are obtained by rank $(n-2)$-approximations for the $(n-1) \times (n-1)$ submatrix whose determinant is a factor in the above product.
\end{example}

In the following example, we discuss how to find the best nonnegative rank-two estimate of a $3 \times 3$-nonnegative matrix with guarantee.

\begin{example}\label{example:3x3}
Consider the group whose elements are simultaneous permutations of rows and columns of a $3 \times 3$ matrix, and permutations of rows with columns.
This group acts on the set zero patterns of a $3 \times 3$ matrix.
There are $26$ orbits of this group action, $13$ of which are listed in Table~\ref{table: 3x3-zero-patterns}.
The columns of Table~\ref{table: 3x3-zero-patterns} list an orbit representative, the orbit size, the ED degree and the description of critical points if available.

The $13$ orbit representatives listed in Table~\ref{table: 3x3-zero-patterns} have the property that there is no zero pattern $S$ with less zeros such that the zero pattern of a critical point for $S$ is contained in the orbit representative.
For example, no zero pattern that contains a row or a column is listed in Table~\ref{table: 3x3-zero-patterns}, because a critical point on the line three of the table has a row of zeros.
If a critical point agrees with $U$ at all its non-zero entries, then adding more zeros causes the Euclidean distance to the data matrix to increase, so such critical points can be discarded.

Moreover, we can also discard the seven orbits of zero patterns marked with star in Table~\ref{table: 3x3-zero-patterns}, because their critical points either appear earlier in the table or the zero patterns of critical points contain the zero pattern of a critical point that appears earlier in the table.

In summary, there are five different kinds of critical points to be considered:
\begin{enumerate}
\item Sum of any 2 components of the SVD of $U$.
In total: $1 \cdot 3=3$.
\item Critical points of diagonal zero patterns.
In total:  $9 \cdot 8 + 18 \cdot 25 + 6 \cdot 30=702$.
\item Critical points that set one row or column of $U$ to zero.
In total:  $6 \cdot 1=6$.
\item Critical points where a $2 \times 2$-submatrix is given by a rank-one critical point of the corresponding submatrix of $U$.
These critical points also have two zeros and one row or column equal to the corresponding row or column of $U$. In total: $18 \cdot 2 = 36$.
\item Critical points where zeros form a $2 \times 2$-submatrix.
In total: $9 \cdot 1 =9$.
\end{enumerate}
In total, the number of critical points is $3+702+6+36+9=756$.
Thus, given a nonnegative $3 \times 3$-matrix $U$, if we construct the $756$ critical points described above and choose among the nonnegative critical points the one that is closest to $U$, then it is guaranteed to be the best nonnegative rank-two approximation of the matrix $U$.

\begingroup
\setlength{\tabcolsep}{5pt}
\renewcommand{\arraystretch}{1.7}
\begin{table}[htbp]
\centering
\begin{tabular}{|c||c|c|c|c|}
\hline
& $S$ & \texttt{\#}orbit & $\mathrm{EDdegree}(\mathcal{L}_r^S)$ & critical points \\
\hhline{|=||====|}
1 & {\tiny$\begin{pmatrix} \cdot & \cdot & \cdot \\ \cdot & \cdot & \cdot \\ \cdot & \cdot & \cdot \end{pmatrix}$} & 1 & 3 &  sum of any 2 components of SVD\\
\hline
2 & {\tiny $\begin{pmatrix} 0 & \cdot & \cdot \\ \cdot & \cdot & \cdot \\ \cdot & \cdot & \cdot \end{pmatrix}$} & 9 & 8 & no interpretation\\
\hline
3 & {\tiny $\begin{pmatrix} 0 & 0 & \cdot \\ \cdot & \cdot & \cdot \\ \cdot & \cdot & \cdot \end{pmatrix}$} & 18 & 3 & {\tiny $\begin{pmatrix} 0 & 0 & 0 \\ \cdot & \cdot & \cdot \\ \cdot & \cdot & \cdot \end{pmatrix}$} or $\rank(X_{\{2,3\},\{1,2\}})=1$\\
\hline
4 & {\tiny $\begin{pmatrix} 0 & \cdot & \cdot \\ \cdot & 0 & \cdot \\ \cdot & \cdot & \cdot \end{pmatrix}$} & 18 & 25 & no interpretation\\
\hline
5* & {\tiny $\begin{pmatrix} 0 & 0 & \cdot \\ 0 & \cdot & \cdot \\ \cdot & \cdot & \cdot \end{pmatrix}$} & 36 & 3 & {\tiny $\begin{pmatrix} 0 & 0 & \cdot \\ 0 & 0 & \cdot \\ \cdot & \cdot & \cdot \end{pmatrix}$} or {\tiny $\begin{pmatrix} 0 & 0 & 0 \\ 0 & \cdot & \cdot \\ \cdot & \cdot & \cdot \end{pmatrix}$} or {\tiny $\begin{pmatrix} 0 & 0 & \cdot \\ 0 & \cdot & \cdot \\ 0 & \cdot & \cdot \end{pmatrix}$}\\
\hline
6* & {\tiny $\begin{pmatrix} 0 & 0 & \cdot \\ \cdot & \cdot & 0 \\ \cdot & \cdot & \cdot \end{pmatrix}$} & 36 & 3 & {\tiny $\begin{pmatrix} 0 & 0 & 0 \\ \cdot & \cdot & 0 \\ \cdot & \cdot & \cdot \end{pmatrix}$} or $\rank(X_{\{2,3\},\{1,2\}})=1$\\
\hline
7 & {\tiny $\begin{pmatrix} 0 & \cdot & \cdot \\ \cdot & 0 & \cdot \\ \cdot & \cdot & 0 \end{pmatrix}$} & 6 & 30 & no interpretation\\
\hline
8 & {\tiny $\begin{pmatrix} 0 & 0 & \cdot \\ 0 & 0 & \cdot \\ \cdot & \cdot & \cdot \end{pmatrix}$} & 9 & 1 & projection onto $\mathcal{L}^S$\\
\hline
9* & {\tiny $\begin{pmatrix} 0 & 0 & \cdot \\ 0 & \cdot & 0 \\ \cdot & \cdot & \cdot \end{pmatrix}$} & 36 & 3 & {\tiny $\begin{pmatrix} 0 & 0 & 0 \\ 0 & \cdot & 0 \\ \cdot & \cdot & \cdot \end{pmatrix}$} or {\tiny $\begin{pmatrix} 0 & 0 & \cdot \\ 0 & 0 & 0 \\ \cdot & \cdot & \cdot \end{pmatrix}$} or {\tiny $\begin{pmatrix} 0 & 0 & \cdot \\ 0 & \cdot & 0 \\ 0& \cdot & \cdot \end{pmatrix}$}\\
\hline
10* & {\tiny $\begin{pmatrix} 0 & 0 & \cdot \\ 0 & \cdot & \cdot \\ \cdot & \cdot & 0 \end{pmatrix}$} & 36 & 3 & {\tiny $\begin{pmatrix} 0 & 0 & 0 \\ 0 & \cdot & \cdot \\ \cdot & \cdot & 0 \end{pmatrix}$} or {\tiny $\begin{pmatrix} 0 & 0 & \cdot \\ 0 & 0 & \cdot \\ \cdot & \cdot & 0 \end{pmatrix}$} or {\tiny $\begin{pmatrix} 0 & 0 & \cdot \\ 0 & \cdot & \cdot \\ 0 & \cdot & 0
\end{pmatrix}$}\\
\hline
11* & {\tiny $\begin{pmatrix} 0 & 0 & \cdot \\ \cdot & \cdot & 0 \\ \cdot & \cdot & 0 \end{pmatrix}$} & 9 & 3 & {\tiny $\begin{pmatrix} 0 & 0 & 0 \\ \cdot & \cdot & 0 \\ \cdot & \cdot & 0 \end{pmatrix}$} or $\rank(X_{\{2,3\},\{1,2\}})=1$\\
\hline
12* & {\tiny $\begin{pmatrix} 0 & 0 & \cdot \\ 0 & \cdot & 0 \\ \cdot & 0 & \cdot \end{pmatrix}$} & 36 & 3 & {\tiny $\begin{pmatrix} 0 & 0 & 0 \\ 0 & \cdot & 0 \\ \cdot & 0 & \cdot \end{pmatrix}$} or
{\tiny $\begin{pmatrix} 0 & 0 & \cdot \\ 0 & 0 & 0 \\ \cdot & 0 & \cdot \end{pmatrix}$} or {\tiny $\begin{pmatrix} 0 & 0 & \cdot \\ 0 & \cdot & 0 \\ 0 & 0 & \cdot \end{pmatrix}$}\\
\hline
13* & {\tiny $\begin{pmatrix} 0 & 0 & \cdot \\ 0 & \cdot & 0 \\ \cdot  & 0 & 0 \end{pmatrix}$} & 6 & 3 & {\tiny $\begin{pmatrix} 0 & 0 & 0 \\ 0 & \cdot & 0 \\ \cdot & 0 & 0 \end{pmatrix}$} or {\tiny $\begin{pmatrix} 0 & 0 & \cdot \\ 0 & 0 & 0 \\ \cdot & 0 & 0 \end{pmatrix}$} or {\tiny $\begin{pmatrix} 0 & 0 & \cdot \\ 0 & \cdot & 0 \\ 0 & 0 & 0 \end{pmatrix}$}\\
\hline
\end{tabular}
\vspace*{1mm}
\caption{The $13$ orbit representatives of zero patterns of $3 \times 3$ matrices that have the property that there is no zero pattern $S$ with less zeros such that the zero pattern of a critical point for $S$ is contained in the orbit representative.}
\label{table: 3x3-zero-patterns}
\end{table}
\endgroup
\end{example}

Example~\ref{example:3x3} suggests that finding the best nonnegative rank-two approximation of a generic matrix with guarantee might be hopeless, because we expect the number of critical points to increase at least exponentially in the matrix size by the conjectures in Section~\ref{sec: EDdegree}. In practice, the best rank-two approximation often has a few zeros.

Using Macaulay2 we sampled uniformly randomly $10^5$ matrices from the set of $3 \times 3$ matrices with real nonnegative entries and the sum of entries being equal to $1000$ and for each data sample we computed numerically a best nonnegative rank-two approximation.
In $88561$ cases, the best approximation has no zeros; in $10550$ cases, the best approximation has one zero; in $889$ cases, the best approximation has two zeros in different rows and columns.
Based on this experiment, we observed two interesting phenomena:
\begin{itemize}
    \item[$(a)$] We never encountered a best nonnegative rank-two approximation with three zeros or with two aligned zeros.
    \item[$(b)$] If the best nonnegative rank-two approximation has zero pattern $S$, then the best rank-two approximation given by SVD has negative entries in $S$.
\end{itemize}
These facts lead to the following open questions.
\begin{question}
\begin{enumerate}
    \item Are the experimental observations $(a)$ and $(b)$ true for any nonnegative matrix $U\in\R^{3\times 3}$?
    \item Given a nonnegative matrix $U\in\R_{\mathsmaller{\ge 0}}^{m\times n}$ whose best nonnegative rank-$2$ approximation has zero pattern $S$, does the best rank-$2$ approximation given by SVD have negative entries in $S$?
    \item For which zero patterns $S\subset[m]\times[n]$ and target ranks $r\in[m-1]$ there exists a full rank nonnegative matrix $U\in\R_{\mathsmaller{\ge 0}}^{m\times n}$ whose best nonnegative rank-$r$ approximation has zero pattern $S$?
\end{enumerate}
\end{question}

\section*{Acknowledgements}
\noindent We thank Giorgio Ottaviani, Grégoire Sergeant-Perthuis, Pierre-Jean Spaenlehauer, and Bernd Sturmfels for helpful discussions and suggestions. We thank two anonymous reviewers for insightful comments which improved the original manuscript.
Kaie Kubjas and Luca Sodomaco are partially supported by the Academy of Finland
Grant No.~323416.
Elias Tsigaridas is partially supported by ANR JCJC GALOP
(ANR-17-CE40-0009), the PGMO grant ALMA, and the PHC GRAPE.

\bibliographystyle{alpha}
\bibliography{low_rank_approximation_with_zeros}

\end{document}